\documentclass[11pt]{article}
\pagestyle{myheadings}
\textwidth 164mm
\oddsidemargin 0mm
\evensidemargin 0mm
\textheight 236mm
\topmargin -15mm

\usepackage{amsmath,amsfonts,amssymb,amsthm,graphics,epsfig,cite,color,indentfirst}
\usepackage{graphics}
\usepackage{subcaption}
\usepackage{mathtools}
\usepackage{commath}
\usepackage{mathrsfs}
\usepackage{verbatim}
\setcounter{MaxMatrixCols}{10}
\usepackage[sc,osf]{mathpazo}
\usepackage[notref,notcite]{showkeys} 
\usepackage[colorlinks=true]{hyperref} 

\newtheorem{theorem}{Theorem}[section]
\newtheorem{assumption}[theorem]{Assumption}
\newtheorem{corollary}[theorem]{Corollary}
\newtheorem{definition}[theorem]{Definition}
\newtheorem{lemma}[theorem]{Lemma}
\newtheorem{claim}[theorem]{Claim}
\newtheorem{proposition}[theorem]{Proposition}
\newtheorem{remark}[theorem]{Remark}
\theoremstyle{plain}

\newcommand{\al}{\alpha}

\newcommand{\la}{\lambda}
\newcommand{\ga}{\gamma}
\newcommand{\sig}{\sigma}
\newcommand{\de}{\delta}

\newcommand{\RR}{{\rm I\kern -1.6pt{\rm R}}}

\newcommand{\N}{\mathbb{N}}
\newcommand{\R}{\mathbb{R}}

\numberwithin{equation}{section}
\numberwithin{figure}{section}

\allowdisplaybreaks

\begin{document}
\title{\bf Age-structured Models with Nonlocal Diffusion of Dirichlet Type, I: Principal Spectral Theory and Limiting Properties\thanks{Research was partially supported by National Science Foundation (DMS-1853622). H. Kang would like to acknowledge the region Normandie for the financial support of his postdoctoral study.}}
	
\author{{\sc Arnaud Ducrot$^{a}$, Hao Kang$^{a, b}$ and Shigui Ruan$^{c}$}\\[2mm]
{\small $^a$Normandie Univ, UNIHAVRE, LMAH, FR-CNRS-3335, ISCN, 76600 Le Havre, France}\\
{\small $^b$Center for Applied Mathematics, Tianjin University, Tianjin 300072, China}\\
{\small Emails: arnaud.ducrot@univ-lehavre.fr and hao.kang@univ-lehavre.fr}\\
{\small $^c$Department of Mathematics, University of Miami, Coral Gables, FL 33146, USA} \\
{\small Email: ruan@math.miami.edu}
}
\maketitle
	
\begin{abstract}
Age-structured models with nonlocal diffusion arise naturally in describing the population dynamics of biological species and the transmission dynamics of infectious diseases in which individuals disperse nonlocally and interact each other and the age structure of individuals matters. In the first part of our series papers, we study the principal spectral theory of age-structured models with nonlocal diffusion of Dirichlet type. First, we provide two criteria on the existence of principal eigenvalues by using the theory of resolvent positive operators with their perturbations. Then we define the generalized principal eigenvalue and use it to investigate the influence of diffusion rate on the principal eigenvalue. In addition, we establish the strong maximum principle for age-structured nonlocal diffusion operators. In the second part \cite{Ducrot2022Age-structuredII} we will investigate the effects of principal eigenvalues on the global dynamics of the model with monotone nonlinearity in the birth rate and show that the principal eigenvalue being zero is critical. 
		
\vspace{0.3cm} \noindent {\em Key words:} Age structure; Nonlocal diffusion; Resolvent positive operators; Spectral bound; Generalized principal eigenvalue; Maximum principle.
		
\vskip 0.2cm
		
\noindent {\em AMS subject classifications:}~  35K57, 47A10, 92D25
		
\end{abstract}	
	
\tableofcontents
	
\section{Introduction}
In this paper, we study the existence of principal eigenvalue and asymptotic behavior of the principal eigenvalue with respect to diffusion rate and diffusion range of linear age-structured models with nonlocal diffusion of Dirichlet type. The motivation comes from investigating the following age-structured model with nonlocal diffusion under Dirichlet boundary condition:
\begin{equation}\label{originallogistic}
\begin{cases}
\!\!(\partial_t \!+\!\partial_a) u(t, a, x)\!\!=\!\! D \!\left[\!\int_{\Omega}\! J(x \!-\! y)u(t, a, y)dy \!-\! u(t, a, x)\!\right]\!\!-\!\!\mu(a, x)u(t, a, x),&\!\!\!\!\!(t, a, x)\!\in\!(0, \infty)\!\times\!(0, \hat{a})\!\times\!\overline{\Omega},\\
u(t, 0, x)=f\left(\int_{0}^{\hat{a}}\beta(a, x)u(t, a, x)da\right),&\!\!\!\!\!(t, x)\!\in\!(0, \infty)\!\times\!\overline{\Omega},\\
u(0, a, x)=u_0(a, x),&\!\!\!\!\!(a, x)\!\in\!(0, \hat{a})\!\times\!\overline{\Omega},
\end{cases}
\end{equation}
where $u(t, a, x)$ denotes the density of population at time $t$, with age $a$ and at position $x$, $J$ is a dispersal kernel and $f$ is a monotone type nonlinearity describing the birth rate of the population. Such equations appear naturally in describing some ecological problems when in addition to the dispersion of the individuals in the environment, the birth and death of these individuals are also modeled, see Fife \cite{fife2017integrodifferential}, Garc\'ia-Meli\'an and Rossi \cite{garcia2009principal},  Hutson et al. \cite{hutson2003evolution}, Medlock and Kot \cite{medlock2003spreading}, and Murray \cite{murray2007mathematical}. It could be used to characterize the spatio-temporal dynamics of biological species and transmission dynamics of infectious diseases in which the age structure of the population is a very important factor and the dispersal is in long distance. We mention that the nonlocal diffusion operator in \eqref{originallogistic} corresponds to zero Dirichlet boundary condition, which indicates that the region outside their habitat, $\mathbb{R}^N\setminus\overline{\Omega}$, is hostile that the population cannot survive there, see Hutson et al. \cite{hutson2003evolution}.
	
Here $\hat{a}\in(0, \infty]$ represents the maximum age and $\Omega\subset\mathbb{R}^N$ is a bounded domain. Moreover, $D>0$ is the diffusion rate and the diffusion kernel $J$ satisfies the following assumption.

\begin{assumption}\label{J}
{\rm The kernel $J\in C(\mathbb{R}^N)$ is nonnegative and supported in $B(0, r)$ for some $r>0$, where $B(0, r)\subset\mathbb{R}^N$ is the open ball centered at $0$ with radius $r$. In addition, $J$ satisfies $J(0)>0$ and $\int_{\mathbb{R}^N}J(x)dx=1$.}
\end{assumption}

Next we provide assumptions on the birth rate $\beta=\beta(a, x)$ and the death rate $\mu=\mu(a, x)$. Define
\begin{eqnarray*}
&&\underline{\mu}(a):=\min_{x\in\overline{\Omega}}\mu(a, x),\; \overline{\mu}(a):=\max_{x\in\overline{\Omega}}\mu(a, x), \\
&&\underline{\beta}(a):=\min_{x\in\overline{\Omega}}\beta(a, x),\; \overline{\beta}(a):=\max_{x\in\overline{\Omega}}\beta(a, x).
\end{eqnarray*} 

\begin{assumption}\label{Ass1.1}
{\rm We assume that the birth rate $\beta=\beta(a, x)$ and the death rate $\mu=\mu(a, x)$ satisfy the following conditions,
\begin{itemize}
\item [(i)] $\beta\in C(\R^N, L^\infty_+(0, \hat{a}))$;
				
\item [(ii)] $\mu\in C(\R^N, L^\infty_{{\rm loc}, +}[0, \hat{a}))$;
				
\item [(iii)] There exists $\widetilde{\mu}>0$ such that $\underline{\mu}(a)\ge \widetilde{\mu}>0$ a.e. $a\in(0, \hat{a})$;

\item [(iv)] For any $x\in\R^N$ and almost every $a\in (0, \hat{a}),$ 
\begin{equation*}
\underline{\beta}(a)\leq \beta(a,x)\text{ and }\mu(a,x)\leq \overline{\mu}(a).
\end{equation*}				
\end{itemize}
}
\end{assumption}

\begin{remark}
{\rm Note that from the biological modeling point of view, one usually also assumes $\int_0^{\hat{a}}\underline{\mu}(a)da=+\infty$ to guarantee that the population density reaches to zero at maximum age. To be able to consider such a situation, we only assume (ii) for $\mu$ so that $\underline{\mu}\in L^\infty_{{\rm loc}, +}[0, \hat a)$ and the integral can be infinite when $\hat a<\infty$. In addition, we mention that Assumption \ref{Ass1.1}-(iv) is employed to study the limiting properties of principal eigenvalue with respect to diffusion rate. In fact, the existence of principal eigenvalue is only dependent on the behavior of $\mu$ and $\beta$ in $\overline{\Omega}$. While for the limiting properties, in particular under the kernel scaling (see Theorem \ref{gamma}), we will use the behavior of $\mu$ and $\beta$ in the larger domain, for example in $\R^N$.
			}
\end{remark}
%
%
%
%
	Note that Assumptions \ref{J}-\ref{Ass1.1} will be required throughout the whole paper, thus in the following we will assume them hold everywhere without repeating them. However, we will indicate the additional assumptions that will be needed.
		
	The important tool for studying the global dynamics of \eqref{originallogistic} is to investigate the spectrum set of the linearized operator of \eqref{originallogistic} at some equilibrium, and then use the information of spectrum set (for example the sign of spectral bound or principal eigenvalue, if exists) to study the long time behavior of \eqref{originallogistic}. In this paper, we are interested in the principal eigenvalue problem of the linearization of \eqref{originallogistic} at zero. We will provide two criteria on the existence of principal eigenvalues by using the theory of resolvent positive operators with their perturbations. We will study the global dynamics of \eqref{originallogistic} in the second part \cite{Ducrot2022Age-structuredII}.
	
	The difficulty in establishing the existence of principal eigenvalues for \eqref{originallogistic} mainly comes from the nonlocal diffusion, since as Donsker and Varadhan \cite{donsker1975variational} have already noticed that there may not exist a principal eigenvalue associated with a positive eigenfunction for the nonlocal diffusion operator in the regular function spaces. Thus, in the following we first briefly recall the history of studying the principal eigenvalues of nonlocal operators; then point out the differences between the previous models and ours; and finally explain our ideas to treat the eigenvalue problem of \eqref{originallogistic}. 
	
	In 2010, Coville \cite{coville2010simple} employed the concept of generalized principal eigenvalues from Berestycki et al. \cite{berestycki1994principal} to study the existence of principal eigenvalues of nonlocal operators and gave a non-locally-integrable condition based on the generalized Krein-Rutman theorem (Edmunds et al. \cite{edmunds1972non} and Nussbaum \cite{nussbaum1981eigenvectors}). Later Berestycki et al. \cite{berestycki2016definition} further studied the problem in both bounded and unbounded domains, then investigated the asymptotic behavior of the generalized principal eigenvalue with respect to the diffusion rate. Related studies along this direction include Coville et al. \cite{coville2013pulsating,coville2013singular,coville2013nonlocal}, Li et al. \cite{li2017eigenvalue}, Su et al. \cite{su2020asymptotic}, Sun et al. \cite{sun2011entire}, Yang et al. \cite{yang2016principal,yang2019dynamics}, Sun et al \cite{sun2017periodic}, Brasseur \cite{brasseur2021role} and the references cited therein. On the other hand, Rawal and Shen \cite{rawal2012criteria}, Rawal et al. \cite{rawal2015spreading}, Shen and Xie \cite{Shen2015principal, shen2015approximations}, and Shen and Zhang \cite{shen2010spreading} investigated the existence of principal eigenvalues for autonomous and time periodic cases respectively, where they gave sufficient and necessary conditions for both cases by using the idea of perturbation of positive operators from B\"urger \cite{burger1988perturbations}. Combining these two directions, recently Shen and Vo \cite{shen2019nonlocal} and Su et al. \cite{su2020generalised} discussed the asymptotic behavior of the generalized principal eigenvalue with respect to the diffusion rate in the time-periodic case. There are also many other studies on the analysis of principal eigenvalues for nonlocal diffusion equations in different situations, the interested readers can refer to Liang et al. \cite{liang2019principal}, Smith \cite{smith2014sufficient}, Coville and Hamel \cite{coville2020generalized}, De Leenheer et al. \cite{de2020persistence}, Onyido and Shen \cite{onyido2021nonlocal} and the references cited therein.
	
	Although it seems to be natural to follow the idea of Rawal and Shen \cite{rawal2012criteria} (where they studied the time-periodic situation) to deal with our case, since there is also a derivative term $\partial_a$ in our model \eqref{originallogistic}, however, it is quite different from ours because there is an initial integral condition in \eqref{originallogistic}. We cannot directly choose the space of functions satisfying the integral condition for $a=0$ as in \cite{rawal2012criteria} where the authors worked on the space of time periodic functions, since such a function space is unknown and heavily depends on the birth rate $\beta$. Thus it forces us to treat the problem in a new and different way. First we define the operator $\mathcal{A}$ given in \eqref{A} containing the integral boundary condition. Then we recall the theory of resolvent positive operators with their perturbations to study the existence of principal eigenvalues. The concepts of resolvent positive operators were proposed long time ago, and then were further studied and developed by Arendt \cite{arendt1987resolvent}, Engel and Nagel \cite{engel2006short}, Kato \cite{kato1982superconvexity}, and Thieme \cite{thieme1998remarks,thieme2009spectral}. Next observing our case, since it contains the $\partial_a$ term, which is like a parabolic type of nonlocal operator, it does not admit the usual $L^2$ variational structure in the elliptic type case. This fact suggests us to follow the idea of Berestycki et al. \cite{berestycki2016definition} to define and study the generalized principal eigenvalue of \eqref{originallogistic} when investigating their asymptotic behavior with respect to the diffusion rate.
	
	We would like to mention that here we propose a kind of new method to study the eigenvalue problem of age-structured models with nonlocal diffusion. Such a method is different from but closely related to the ideas from both Coville \cite{coville2010simple} and Rawal and Shen \cite{rawal2012criteria}. In this method, we first characterize the spectral bounds of two related operators and then employ the theory of resolvent positive operators with their perturbations to obtain the existence of principal eigenvalues by comparing the two spectral bounds. We point out that such an idea is basically similar to B\"urger's perturbation of positive semigroups \cite{burger1988perturbations} which Rawal and Shen \cite{rawal2012criteria} employed, but they required that the operator has a dense domain and generates a positive semigroup of contractions, which seems to be restrictive and in general not satisfied in our case. Further, we will bridge a close link between our idea and Coville's \cite{coville2010simple,coville2013pulsating}, see Remark \ref{GKR} in Section \ref{Principal Spectral Theory}. In addition, this work of combining these two features (age structure and nonlocal diffusion) is also an extension of our previous studies in Kang et al. \cite{Kang2020Age} and Kang and Ruan \cite{kang2021nonlinear}. 
	
	The paper is organized as follows. In Section \ref{Notations}, we introduce our fundamental setting including operators and function spaces. In Section \ref{Preliminaries}, we present some necessary propositions and lemmas in order to prove the main theorems later. It includes two important propositions for characterizing the spectral bounds of two key operators respectively. More precisely, we analyze the spectral bound $s(\mathcal{B}_1+\mathcal{C})$ of $\mathcal{B}_1+\mathcal{C}$ which corresponds to the age-structured model without nonlocal diffusion and the one $s(\mathcal{A})$ of $\mathcal{A}$ defined in \eqref{A} which corresponds to the age-structured model with nonlocal diffusion respectively, and then obtain a non-strict size relation between them. In Section \ref{Principal Spectral Theory}, we provide the main result on the existence of principal eigenvalues based on the strict size relation between $s(\mathcal{B}_1+\mathcal{C})$ and $s(\mathcal{A})$, and find two relatively easily verifiable and sufficient conditions for $s(\mathcal{A})$ being the principal eigenvalue. Further, we show by giving a counterexample that such conditions are sharp in the sense that if they are not satisfied, $\mathcal{A}$ admits no principal eigenvalue. In Section \ref{LP}, we study the effects of the diffusion rate and diffusion range on the generalized principal eigenvalue of $\mathcal{A}$ and discuss the continuous dependence of the principal eigenvalue on the birth and death rates $\beta$ and $\mu$. In Section \ref{Strong Maximum Principle}, we give the strong maximum principle which is of fundamental importance and independent interest. For the readers' convenience, we recall the theory of resolvent positive operators with their perturbations in Appendix. In addition, we mention that our work here is different from the previous one \cite{kang2021principal} where we considered the Neumann boundary condition and in particular, the limiting properties are quite different.
	
	Finally, we want to mention that the conditions introduced at the beginning that $J$ has a compact support and $\Omega$ is bounded can be relaxed. For the principal spectral theory, we only need $\Omega$ to be bounded without requiring that $J$ has a compact support. However, the boundedness of $\Omega$ seems necessary due to the lack of Harnack's inequality for such parabolic problems, see Shen and Vo \cite{shen2019nonlocal}. Moreover, in order to study the limiting properties of principal eigenvalues with kernel scaling, $J$ is needed to be compactly supported for Taylor expansion later. In addition, the condition that $\Omega$ is bounded can be removed if one only defines the generalized principal eigenvalue, see Berestycki \cite{berestycki2016definition}. Here to give a unified presentation of the results, we assume both of them.
	
	\section{Notations}\label{Notations} 
	In this section we introduce our notations and some preparatory results. Denote by $X$ and $X_+$ respectively the Banach space $X=C(\overline{\Omega})$ and its positive cone or the Banach space $X=L^1(\Omega)$ and its positive cone. Here $\Omega\subset\mathbb{R}^N$ is a given bounded domain. Recall that for both cases $X_+$ is a normal and generating cone. In addition, we denote by $I$ the identity operator.
	
	Then we define the following function spaces
	\begin{equation*}
		\mathcal{X}=X\times L^1((0, \hat{a}), X), \; \mathcal{X}_0=\{0_X\}\times L^1((0, \hat{a}), X),
	\end{equation*}
	endowed with the product norms and the positive cones:
	\begin{equation*}
		\mathcal{X}^+=X_+\times L^1_+((0, \hat{a}), X)=X_+\times\{u\in L^1((0, \hat{a}), X): u(a, \cdot)\in X_+,\; \text{a.e. in }(0, \hat{a})\}, \; \mathcal{X}^+_0=\mathcal{X}^+\cap \mathcal{X}_0.
	\end{equation*}
	We also define the linear positive and bounded operator $K\in\mathcal L(X)$ by
	\begin{equation}\label{op-K}
		[K\varphi](\cdot)=\int_\Omega J(\cdot-y)\varphi(y)dy,\;\forall \varphi\in X.
	\end{equation}	
	Note that due to Assumption \ref{J} one has
	\begin{equation}\label{norm-K}
		\|K\|_{\mathcal L(X)}\leq \begin{cases} \sup_{y\in\Omega}\int_{\Omega} J(x-y)dx\text{ if $X=L^1(\Omega)$}\\ \sup_{x\in\overline{\Omega}}\int_{\Omega} J(x-y)dy\text{ if $X=C(\overline\Omega)$}\end{cases}\leq \int_{\R^N}J(z)dz=1.
	\end{equation}

	\subsection{Evolution Family Without Diffusion}
	
	We consider the following problem posed in $X$ for $0\leq\tau\leq a<\hat a$:
	\begin{equation}\label{B_1}
		\begin{cases}
			\partial_av(a)=-\mu(a, \cdot)v(a), \;\tau<a<\hat a,\\
			v(\tau)=\eta\in X.
		\end{cases}
	\end{equation}
	This problem generates an evolution family on $X$, denoted by $\Pi$, that is explicitly given for $0\leq\tau\leq a<\hat a$ and $\eta\in X$ by 
	\begin{equation}\label{def-pi}
		\begin{split}
			&\Pi(\tau,a)\eta=\pi(\tau,a,\cdot)\eta\\
			&\text{with  $\pi(\tau, a, x):=\exp\left(-\int_\tau^a\mu(s, x)ds\right)$ for $0\leq\tau\leq a<\hat a$ and $x\in\overline\Omega$}.
		\end{split}
	\end{equation}		
	Observe that one has 
	\begin{equation}\label{expbdd}
		\|\Pi(\tau,a)\|_{\mathcal L(X)}\leq \exp\left(-\int_\tau^a\underline{\mu}(s)ds\right)\leq e^{-\widetilde{\mu}(a-\tau)}\leq 1,\;\;\forall\,0\leq\tau\leq a<\hat a. 
	\end{equation}
	We also define the following family of bounded linear operators $\{W_\lambda\}_{\lambda>-\widetilde{\mu}}\subset \mathcal L\left(\mathcal{X},\mathcal{X}_0\right)$ for $(\eta,g)\in \mathcal X$ by 
	\begin{eqnarray}
		W_\lambda(\eta, g)&=&(0, h)\nonumber\\
		\text{with } h(a)&=&e^{-\lambda a}\Pi(0, a)\eta+\int_0^a e^{-\lambda(a-s)}\Pi(s, a)g(s)ds.\label{W_alpha}
	\end{eqnarray}
	We will show that this provides a family of positive pseudoresolvents. To this aim, one can make some computations to obtain
	\begin{eqnarray}
		&&W_\nu W_\lambda(\eta, g)\nonumber\\
		&& \quad = \int_0^ae^{-\nu(a-s)}\Pi(s, a)e^{-\lambda s}\Pi(0, s)\eta ds+\int_{0}^{a}e^{-\nu(a-s)}\Pi(s, a)\int_{0}^{s}e^{-\lambda(s-\tau)}\Pi(\tau, s)g(\tau)d\tau ds\nonumber\\
		&& \quad = \int_0^ae^{-\nu a}e^{-(\lambda-\nu)s}ds\Pi(0, a)\eta+\int_0^a\int_0^se^{\lambda\tau-\nu a}e^{-(\lambda-\nu)s}\Pi(\tau, a)g(\tau)d\tau ds.\nonumber
	\end{eqnarray}
	Hence for $\nu\neq\lambda$, we have
	\begin{eqnarray}
		&&W_\nu W_\lambda(\eta, g)\nonumber\\
		&& \quad = \frac{1}{\nu-\lambda}\left(e^{-\lambda a}-e^{-\nu a}\right)\Pi(0, a)\eta+\frac{1}{\nu-\lambda}\left(e^{-(\lambda-\nu) a}-e^{-(\lambda-\nu)\tau}\right)\int_0^a e^{\lambda\tau-\nu a}\Pi(\tau, a)g(\tau)d\tau\nonumber\\
		&& \quad = \frac{1}{\nu-\lambda}\left(W_\lambda-W_\nu\right)(\eta, g).\nonumber
	\end{eqnarray}
	Moreover, one see (for example Magal and Ruan \cite[Lemma 3.8.3]{magal2018theory}) that for all $\lambda>-\widetilde{\mu}$,
	$$ 
	W_\lambda(\eta, g)=0 \text{ only occurs if }\eta=0, g=0
	$$ 
	and 
	$$
	\lim\limits_{\lambda\to\infty}\lambda W_\lambda(0, g)=(0, g), \;\forall (0, g)\in \mathcal{X}_0. 
	$$
	Moreover, one has
	$$
	\norm{W_\la}_{\mathcal{L}(\mathcal{X}, \mathcal{X}_0)}\le\frac{1}{\la+\widetilde{\mu}}.
	$$
	Thus, by Pazy \cite[Section 1.9]{pazy2012semigroups} there exists a unique closed Hille-Yosida operator $\widetilde B_1: {\rm dom}(\widetilde B_1)\subset \mathcal{X}\to\mathcal{X}$ with $\overline{{\rm dom}(\widetilde B_1)}=\mathcal X_0$ such that 
	\begin{equation}\label{def B_1}
		\left(\lambda I-\widetilde B_1\right)^{-1}=W_\lambda\text{ for all $\lambda>-\widetilde{\mu}$.} 
	\end{equation}
	Recalling \eqref{op-K} we also define the bounded linear operator $\mathcal B_2\in \mathcal L(\mathcal X_0)$ by  
	$$
	\mathcal{B}_2(0, g)=\left(0, DKg(\cdot)\right),\; \forall (0, g)\in\mathcal{X}.
	$$
	
	\subsection{Evolution Family With Diffusion}  
	Consider now the following evolution equation for $\eta\in X$ and $0\leq\tau\leq a<\hat a$:
	\begin{equation}\label{steadystatequation}
		\begin{cases}
			\partial_a u(a)=D(K-I_X) u(a)-\mu(a, \cdot)u(a),\; \tau< a<\hat a,\\
			u(\tau)=\eta\in X.
		\end{cases}
	\end{equation}
	Define the evolution family $\{\mathcal{U}(\tau, a)\}_{0\leq\tau\leq a< \hat{a}}\subset\mathcal{L}(X)$ associated with \eqref{steadystatequation}. Using the constant of variation formula, $\mathcal U$ becomes for all $0\leq\tau\leq a<\hat a$ the solution of the equation 
	\begin{equation}\label{U}
		\begin{cases}
		\mathcal{U}(\tau, a)=e^{-D(a-\tau)}\Pi(\tau,a)+D\int_\tau^a e^{-D(a-l)}\Pi(l,a)K\,\mathcal{U}(\tau, l)dl,\\
		\mathcal U(\tau, \tau)=I_X.
		\end{cases}
	\end{equation}
	Here $I_X$ is the identity operator in $X$. Note that the right hand side of \eqref{steadystatequation} is linear and bounded with respect to $u$, thus the existence and uniqueness of $\{\mathcal{U}(\tau, a)\}_{0\le\tau\le a<\hat{a}}$ can be obtained from the general semigroup theory (see Pazy \cite{pazy2012semigroups}). Next let us prove that $\{\mathcal{U}(\tau, a)\}_{0\le\tau\le a<\hat{a}}$ is exponentially bounded. 
	
	To this aim fix $\eta\in X$, $\tau\in [0,\hat a)$ and set $u(a)=\mathcal U(\tau,a)\eta$. Then one has
	$$
	\norm{u(a)}_X\leq e^{-(D+\widetilde \mu)(a-\tau)}\|\eta\|_X+D\|K\|_{\mathcal L(X)}\int_\tau^a e^{-(D+\widetilde \mu)(a-l)}\|u(l)\|_Xdl.
	$$	
	Next Gronwall's inequality yields
	$$
	\norm{u(a)}_X e^{(D+\widetilde \mu)(a-\tau)}\le \norm{\eta}_Xe^{D\norm{K}_{\mathcal{L}(X)}(a-\tau)},
	$$
	which implies due to \eqref{norm-K} that
	$$
	\norm{\mathcal U(\tau,a)}_{\mathcal L(X)}\le e^{-\widetilde{\mu}(a-\tau)}.
	$$
	As a consequence $\{\mathcal{U}(\tau, a)\}_{0\le \tau\le a<\hat{a}}$ is positive and exponentially bounded in $X$ and satisfies
	\begin{equation}\label{EB}
		\norm{\mathcal{U}(a, a+t)}_{\mathcal{L}(X)}\le e^{-\widetilde{\mu} t},\;\forall t\ge0, 0\le a<\hat{a}-t.
	\end{equation}
	Now we define the family of bounded linear operators $\{R_\lambda\}_{\lambda>-\widetilde{\mu}}\subset\mathcal{L}(\mathcal{X}, \mathcal{X}_0)$ as follows:
	\begin{eqnarray}\label{R_lambda}
		R_\lambda(\eta, g)&=&(0, h)\nonumber\\
		\text{ with } h(a)&=&e^{-\lambda a}\mathcal{U}(0, a)\eta+\int_0^ae^{-\lambda(a-s)}\mathcal{U}(s, a)g(s)ds.
	\end{eqnarray}
	Moreover, for any $\la>-\widetilde{\mu}$, one has
	$$
	\norm{R_\la}_{\mathcal{L}(\mathcal{X}, \mathcal{X}_0)}\le\frac{1}{\la+\widetilde{\mu}}.
	$$
	Then by the same procedure as in the case without diffusion, we can prove that this provides a family of positive pseudoresolvents. Thus again by Pazy \cite[Section 1.9]{pazy2012semigroups} there exists a unique closed Hille-Yosida operator $\mathcal{B}: {\rm dom}(\mathcal B)\subset\mathcal{X}\to\mathcal{X}$ with $\overline{{\rm dom}(\mathcal B)}=\mathcal X_0$ such that 
	\begin{equation*}
		\left(\lambda I-\mathcal{B}\right)^{-1}=R_\lambda\text{ for all $\lambda>-\widetilde{\mu}$}. 
	\end{equation*}
	Next we define the part of $\mathcal{B}$ in $\mathcal{X}_0$, denoted by $\mathcal{B}_0$. That is, 
	$$ 
	\mathcal{B}_0x=\mathcal{B}x, \, \forall x\in D(\mathcal{B}_0),  \text{ with }  {\rm dom}(\mathcal{B}_0):=\{x\in {\rm dom}(\mathcal{B}): \mathcal{B}x\in \mathcal{X}_0\}. 
	$$
	Note that $\mathcal{B}_0$ is the infinitesimal generator of a strongly continuous semigroup of bounded linear operators on $\mathcal{X}_0$, denoted by $\{T_{\mathcal{B}_0}(t)\}_{t\geq0}$. Moreover, it satisfies the following estimate
	$$
	\left\|T_{\mathcal{B}_0}(t)\right\|_{\mathcal{L}(\mathcal{X}_0)}\le e^{-\widetilde{\mu} t},\;\forall t\ge0.
	$$
	Observe now that we have $\widetilde B_1+\mathcal B_2-DI=\mathcal B$. From now on for the sake of convenience, we denote $\mathcal B_1:=\widetilde B_1-DI$.
	
	On the other hand, we define $\mathcal{C}\in\mathcal L(\mathcal X_0,\mathcal X)$ by
	$$
	\mathcal{C}(0, h)=\left(\int_{0}^{\hat{a}}\beta(a, \cdot)h(a)da, \;  0\right),\; (0, h)\in\mathcal{X}_0,
	$$
	and  $\mathcal{A}:{\rm dom}(\mathcal{A})\subset \mathcal{X}\to \mathcal{X}$ by
	\begin{equation}\label{A}
		\begin{cases}
			{\rm dom}(\mathcal{A})={\rm dom}(\mathcal{B})\subset \mathcal X_0,\\
			\mathcal{A}=\mathcal{B}+\mathcal{C}.
		\end{cases}
	\end{equation}
	This shows that $\mathcal{A}$ is not densely defined in $\mathcal{X}$.
	
\begin{remark}\label{age-operator}
{\rm In addition, for each fixed $x\in\overline{\Omega}$, following the above procedures, one can obtain the age-structured operator, denoted by $\mathcal{B}_1^x+\mathcal{C}^x$, defined on $\R\times L^1(0, \hat a)$.}
	\end{remark}  
	
	Now define the map $F: \mathcal{X}_0\to \mathcal{X}$ by
	$$
	F\begin{pmatrix}
		0, \psi
	\end{pmatrix}
	=\begin{pmatrix}
		f\left(\int_0^{\hat{a}}\beta(a, \cdot)\psi(a)da\right), 0
	\end{pmatrix}.
	$$
	Then by identifying $U(t)=\begin{pmatrix}
		0, u(t)
	\end{pmatrix}$, one can rewrite problem \eqref{originallogistic} as the following abstract Cauchy problem:
	\begin{equation}\label{Cauchy}
		\begin{cases}
			\frac{dU}{dt}=\mathcal{B}U+F(U),\\
			U(0)=U_0,
		\end{cases}\text{ with }U_0=\begin{pmatrix}
			0, u_0
		\end{pmatrix}\in\mathcal{X}_0.
	\end{equation}
	As mentioned before, we will study the principal spectral theory of the linearized problem corresponding to \eqref{Cauchy}; that is, the principal spectral theory of $\mathcal{A}=\mathcal{B}+f'(0)\mathcal{C}$. For the sake of convenience, we first ignore the constant $f'(0)$ before investigating the global dynamics of \eqref{originallogistic}, which is left in our forthcoming paper \cite{Ducrot2022Age-structuredII}.
	
	Finally, let us introduce briefly our idea to the existence of principal eigenvalues. Observe that if $\alpha\in\rho(\mathcal{B}_1+\mathcal{C})$, then the equation
	$$ 
	\mathcal{A}u=(\mathcal{B}_2+\mathcal{B}_1+\mathcal{C})u=\alpha u 
	$$
	has nontrivial solutions in $\mathcal{X}_0$ is equivalent to the euqattion 
	$$ 
	\mathcal{B}_2(\alpha I -\mathcal{B}_1-\mathcal{C})^{-1}v=v 
	$$
	has nontrivial solutions in $\mathcal{X}$. Next on one hand, we will prove that $\mathcal{A}$ is a positive and compact perturbation of $\mathcal{B}_1+\mathcal{C}$ (see Appendix for precise definitions). On the other hand, we will provide some relatively easy to verify and general sufficient conditions for $s(\mathcal{A})>s(\mathcal{B}_1+\mathcal{C})$. Finally we apply the theory of resolvent positive operators with their perturbations to study the existence of principal eigenvalues of our problem.
	
	Before ending this section, we would like to mention that the well-posedness of the Cauchy problem \eqref{Cauchy} has been investigated in an abstract setting by the theory of integrated semigroups, see Thieme \cite{thieme1998positive,thieme2009spectral}, and in particular Magal and Ruan \cite{magal2007integrated} in a more general framework where the operators are neither densely-defined nor of Hille-Yosida type, for example in $L^p$ spaces with $p>1$. Here we focus on the principal spectral theory and global dynamics.
	
	\section{Preliminaries}\label{Preliminaries}
	In this section we present some necessary propositions and lemmas to 1) establish the existence of the spectral bounds of $\mathcal{B}_1+\mathcal{C}$ and $\mathcal{A}$ which correspond to the evolution families without diffusion ($\{\Pi(\tau, a)\}_{0\le\tau\le a<\hat{a}}$) and with diffusion ($\{\mathcal{U}(\tau, a)\}_{0\le\tau\le a<\hat{a}}$) respectively; 2) show that $\mathcal{A}$ is a positive and compact perturbation of $\mathcal{B}_1+\mathcal{C}$. We emphasize that the following results hold for both $X=C(\overline{\Omega})$ and $X=L^1(\Omega)$ if we do not indicate what $X$ is exactly. Moreover, we define the intervals $V$ and $\widetilde V$ as follows,
	\begin{equation}\label{I interval}
		V=\begin{cases}
			\R, &\text{ if }\hat{a}<\infty,\\
			(-D-\widetilde{\mu}, \infty), &\text{ if }\hat{a}=\infty.
		\end{cases} \text{ and }\widetilde V=\begin{cases}
			\R, &\text{ if }\hat{a}<\infty,\\
			(-D\la^0-\widetilde{\mu}, \infty), &\text{ if }\hat{a}=\infty.
		\end{cases}
	\end{equation}
	Here $0<\lambda^0<1$ is the principal eigenvalue of $-K+I$ associated with a positive eigenfunction $\phi_0\in C(\overline{\Omega})$ (see Garc\'ia-Meli\'an and Rossi \cite[Theorem 2.1]{garcia2009principal}). 
	
	In order to deal with the case $\hat{a}=+\infty$, we provide the following additional assumption throughout this section.
	\begin{assumption}\label{mu}
		{\rm Denote $\overline{\pi}(a)=e^{-\int_0^a\overline{\mu}(s)ds}$. If $\hat a=\infty$, we assume that there exists a real number $\widehat{\lambda}\in V$ such that 
			$$
			\widehat{R}:=\int_0^\infty\underline{\beta}(a)e^{-(\widehat{\lambda}+D)a}\overline{\pi}(a)da>1.
			$$
		} 
	\end{assumption}
	
\begin{remark}\label{rkk}
{\rm Note that coupling the above Assumption \ref{mu} together with Assumption \ref{Ass1.1}-(iv) this ensures that when $\hat a=\infty$ then we have for all $x\in\R^N$:
$$
1<\widehat{R}\leq \int_0^\infty \beta(a,x)e^{-(\widehat{\lambda}+D)a}\pi(0,a,x)da.
$$ 
This property will be used below to construct principal eigenvectors for any given and fixed $x\in\R^N$.}
\end{remark}

	\begin{remark}\label{mu2}
		{\rm 
			Assumption \ref{mu} is employed to guarantee the existence of spectral bound of $s(\mathcal{B}_1+\mathcal{C})$. Moreover, by setting 
			$$
			\widetilde{\lambda}:=\widehat{\lambda}+D-D\lambda^0,
			$$
			Assumption \ref{mu} reads as follows: there exists a number $\widetilde{\lambda}\in \widetilde V$ such that 
			$$
			\widehat{R}=\int_0^\infty\underline{\beta}(a)e^{-(\widetilde{\lambda}+D\lambda^0)a}\overline{\pi}(a)da>1.
			$$
			Later, we will see that this new condition $\widehat{R}>1$ is used to obtain the existence of spectral bound of $s(\mathcal{A})$ when $\hat{a}=+\infty$.
		}
	\end{remark}
	
	\subsection{Characterization of $s(\mathcal{B}_1+\mathcal{C})$}
	Now recalling that the functions $\{\pi(\tau, a,x)\}_{0\le\tau\le a<\hat{a}, x\in\overline\Omega}$ given in \eqref{def-pi}, we define for $\alpha\in V$ a continuous function $G_\alpha: \overline{\Omega}\to\R$ by
	\begin{eqnarray}\label{Galpha}
		G_\alpha(x)&=&\int_{0}^{\hat{a}}\beta(a, x)e^{-(\alpha+D)a}\pi(0, a, x)da,\;\forall x\in \overline\Omega.\label{Galpha(x)}
	\end{eqnarray}
	We also consider for $\alpha\in V$ a multiplication operator $\mathcal{G}_{\alpha}\in \mathcal L(X)$ given by
	\begin{eqnarray}
		[\mathcal{G}_\alpha g](x)&=&G_\alpha(x)g(x), \; g\in X.\label{Galphadef}
	\end{eqnarray}
	
	Then the following proposition holds.
	\begin{proposition}\label{rGalpha}
		Under Assumption \ref{mu}, there exists $\alpha^{**}\in V$ satisfying the equation 
		\begin{equation}\label{max}
			\max_{x\in\overline{\Omega}}G_{\alpha^{**}}(x)=\max_{x\in\overline{\Omega}}\int_{0}^{\hat{a}}\beta(a, x)e^{-(\alpha^{**}+D)a}\pi(0, a, x)da=1.
		\end{equation} 				
		Moreover, $\mathcal{B}_1+\mathcal{C}$ is a resolvent positive operator with $s(\mathcal{B}_1+\mathcal{C})=\alpha^{**}$ and
		\begin{equation}\label{rG}
			r\left(\mathcal{G}_{\alpha^{**}}\right)=r\left(\int_{0}^{\hat{a}}\beta(a, \cdot)e^{-(\alpha^{**}+D)a}\Pi(0, a)da\right)=1.
		\end{equation}	
	\end{proposition}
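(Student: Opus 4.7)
The plan is to construct $\alpha^{**}$ via an intermediate-value argument on the scalar function $M(\alpha):=\max_{x\in\overline\Omega}G_\alpha(x)$ and then identify the spectrum of $\mathcal{B}_1+\mathcal{C}$ by an explicit resolvent computation that reduces inverting $\alpha I-\mathcal{B}_1-\mathcal{C}$ to inverting the multiplication operator $I-\mathcal{G}_\alpha$ on $X$.

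First, Assumption \ref{Ass1.1} together with the uniform domination $0\le \beta(a,x)e^{-(\alpha+D)a}\pi(0,a,x)\le \overline\beta(a)e^{-(\alpha+D)a}\overline\pi(a)$ enables dominated convergence to give the joint continuity of $(\alpha,x)\mapsto G_\alpha(x)$ on $V\times\overline\Omega$ and its strict monotonicity in $\alpha$. Consequently $M$ is continuous and strictly decreasing on $V$ with $M(\alpha)\to 0$ as $\alpha\to\infty$. On the other hand, Assumption \ref{mu} combined with Assumption \ref{Ass1.1}-(iv) (as observed in Remark \ref{rkk}) furnishes $M(\widehat\lambda)\ge \widehat R>1$ when $\hat a=\infty$, while for $\hat a<\infty$ the blow-up of $e^{-(\alpha+D)a}$ as $\alpha\to-\infty$ gives the same conclusion. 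The intermediate value theorem then yields a unique $\alpha^{**}\in V$ with $M(\alpha^{**})=1$.

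For the spectral identification, I would compute the action of $\mathcal{B}_1+\mathcal{C}$ from the characterization of $\widetilde B_1$ through $W_\lambda$ in \eqref{def B_1}: on $\mathrm{dom}(\mathcal{B}_1)$ one obtains $\mathcal{B}_1(0,h)=(-h(0),-\partial_a h-(\mu+D)h)$ and $\mathcal{C}(0,h)=\bigl(\int_0^{\hat a}\beta(a,\cdot)h(a)\,da,0\bigr)$. The resolvent equation $(\alpha I-\mathcal{B}_1-\mathcal{C})(0,h)=(\phi,\psi)\in\mathcal{X}$ therefore rewrites as the transport ODE $\partial_a h=-(\alpha+D+\mu)h+\psi$ paired with the renewal-type boundary condition $h(0)=\phi+\int_0^{\hat a}\beta(a,\cdot) h(a)\,da$. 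Solving the ODE explicitly via $\Pi$ and substituting into the boundary condition collapses the system to the single identity
\[(I-\mathcal{G}_\alpha)\,h(0)=\phi+\Psi_\alpha\psi,\]
with $\Psi_\alpha\in\mathcal{L}(L^1((0,\hat a),X),X)$ an explicit positive bounded operator built from $\beta$, the exponential weight and $\Pi$.

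Since $\mathcal{G}_\alpha$ is a multiplication operator on $X=C(\overline\Omega)$ or $L^1(\Omega)$, its spectrum coincides with $G_\alpha(\overline\Omega)$ and $r(\mathcal{G}_\alpha)=M(\alpha)$. Extending the reduction above to complex $\alpha$ via $|G_\alpha(x)|\le G_{\mathrm{Re}\,\alpha}(x)$ shows $\alpha\in\rho(\mathcal{B}_1+\mathcal{C})$ if and only if $1\notin G_\alpha(\overline\Omega)$; combined with the monotonicity of $M$, this gives $s(\mathcal{B}_1+\mathcal{C})=\alpha^{**}$. For real $\alpha>\alpha^{**}$ one has $\|\mathcal{G}_\alpha\|_{\mathcal{L}(X)}\le M(\alpha)<1$, so the Neumann series produces a positive inverse $(I-\mathcal{G}_\alpha)^{-1}$, and together with the positivity of $\Psi_\alpha$ and of $\Pi$ this delivers the resolvent positivity of $\mathcal{B}_1+\mathcal{C}$. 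Finally, $r(\mathcal{G}_{\alpha^{**}})=M(\alpha^{**})=1$, and since $\Pi(0,a)$ acts by multiplication by $\pi(0,a,\cdot)$, the integral operator appearing in \eqref{rG} coincides with $\mathcal{G}_{\alpha^{**}}$ itself. The principal obstacle will be a careful rigorous justification of the resolvent reduction on $\mathrm{dom}(\mathcal{B}_1+\mathcal{C})$, where $h(0)$ is only defined implicitly through the Hille--Yosida resolvent $W_\lambda$, and ensuring that all estimates and positivity statements proceed uniformly in both choices $X=C(\overline\Omega)$ and $X=L^1(\Omega)$.
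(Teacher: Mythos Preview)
Your proposal is correct and follows essentially the same route as the paper: both reduce the invertibility of $\alpha I-\mathcal{B}_1-\mathcal{C}$ to that of $I-\mathcal{G}_\alpha$, exploit that $\mathcal{G}_\alpha$ is a multiplication operator with spectrum $[\min_x G_\alpha(x),\max_x G_\alpha(x)]$, and locate $\alpha^{**}$ by the monotone dependence of $\max_x G_\alpha(x)$ on $\alpha$. The one organizational difference worth noting is that the paper avoids your stated obstacle about making sense of $h(0)$ on $\mathrm{dom}(\mathcal{B}_1)$ by working entirely through the explicit resolvent $W_\alpha=(\alpha I-\mathcal{B}_1)^{-1}$ from \eqref{W_alpha} and the factorization $(\alpha I-\mathcal{B}_1-\mathcal{C})^{-1}=(\alpha I-\mathcal{B}_1)^{-1}[I-\mathcal{C}(\alpha I-\mathcal{B}_1)^{-1}]^{-1}$, so that no direct manipulation of the domain is ever needed; your complex-$\alpha$ remark is replaced in the paper by an implicit appeal to $s(A)=s_{\mathbb R}(A)$ for resolvent positive operators (Theorem~\ref{sR}).
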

	
	\begin{proof}
		Observe that for $\al\in\rho(\mathcal B_1)$ the operator $\alpha I-\mathcal{B}_1-\mathcal{C}$ is invertible if and only if the operator $I-\mathcal{C}(\alpha I-\mathcal{B}_1)^{-1}$ is invertible. In that case, we have
		$$
		(\alpha I-\mathcal{B}_1-\mathcal{C})^{-1}=(\alpha I-\mathcal{B}_1)^{-1}\left[I-\mathcal{C}(\alpha I-\mathcal{B}_1)^{-1}\right]^{-1}.
		$$
		We now compute the inverse of $I-\mathcal{C}(\alpha I-\mathcal{B}_1)^{-1}$. To this aim choose $\alpha\in\rho(\mathcal{B}_1)$ and for some $(\eta, \varphi), (\widehat\eta, \widehat\varphi)\in\mathcal{X}$ consider 
		$$
		(\widehat{\eta}, \widehat{\varphi})=\left[I-\mathcal{C}(\alpha I-\mathcal{B}_1)^{-1}\right](\eta, \varphi).
		$$
		First we define
		\begin{eqnarray}
			(0, \phi)=(\alpha I-\mathcal{B}_1)^{-1}(\eta, \varphi).\nonumber
		\end{eqnarray}
		It follows that
		$$
		\widehat{\varphi}=\varphi \text{ and }\widehat{\eta}=\eta-\int_0^{\hat{a}}\beta(a, \cdot)\phi(a)da.
		$$
		Next recall from \eqref{W_alpha} that one has
		$$
		\phi(a)=e^{-(\alpha+D)a}\Pi(0, a)\eta+\int_0^ae^{-(\alpha+D)(a-s)}\Pi(s, a)\varphi(s)ds.
		$$
		It follows that 
		\begin{eqnarray}
			&& \eta-\int_{0}^{\hat{a}}\beta(s, \cdot)e^{-(\alpha+D)s}\Pi(0, s)\eta ds \nonumber\\
			&&\quad\quad =\int_{0}^{\hat{a}}\beta(s, x)\int_{0}^{s}e^{-(\alpha+D)(s-\tau)}\Pi(\tau, s)\widehat{\varphi}(\tau)d\tau ds+\widehat{\eta},\nonumber
		\end{eqnarray}
		which is equivalent to 
		\begin{equation}\label{I-G}
			(I-\mathcal{G}_\alpha)\eta=\int_{0}^{\hat{a}}\beta(s, \cdot)\int_{0}^{s}e^{-(\alpha+D)(s-\tau)}\Pi(\tau, s)\widehat{\varphi}(\tau)d\tau ds+\widehat{\eta},
		\end{equation} 
		where $\mathcal{G}_\alpha$ is defined in (\ref{Galphadef}). Thus if $1\in\rho(\mathcal{G}_\alpha)$ for $\al\in V$, then
		\begin{equation}\label{(I-G)^{-1}}
			\eta=(I-\mathcal{G}_\alpha)^{-1}\left[\int_{0}^{\hat{a}}\beta(s, \cdot)\int_{0}^{s}e^{-(\alpha+D)(s-\tau)}\Pi(\tau, s)\widehat{\varphi}(\tau)d\tau ds+\widehat{\eta}\right], 
		\end{equation} 
		which implies that 
		\begin{eqnarray}\label{resolventsolution1}
			(\eta, \varphi)&=&\left[I-\mathcal{C}(\alpha I-\mathcal{B}_1)^{-1}\right]^{-1}(\widehat{\eta}, \widehat{\varphi})\nonumber\\
			&=&\left((I-\mathcal{G}_\alpha)^{-1}\left[\int_{0}^{\hat{a}}\beta(s, \cdot)\int_{0}^{s}e^{-(\alpha+D)(s-\tau)}\Pi(\tau, s)\widehat{\varphi}(\tau)d\tau ds+\widehat{\eta}\right], \widehat{\varphi}\right).
		\end{eqnarray} 
		It follows that $\alpha\in\rho(\mathcal{B}_1+\mathcal{C})$ and thus $(\alpha I -\mathcal{B}_1-\mathcal{C})^{-1}$ exists. Now we have shown that 
		$$
		\al\in\rho(\mathcal{B}_1+\mathcal{C})\cap V\Leftrightarrow \al\in V \text{ and } 1\in\rho(\mathcal{G}_\al),
		$$
		thus the problem is inverted into finding such $\alpha\in V$ satisfying $1\in\rho(\mathcal{G}_\alpha)$. 
		
		Observe that $\mathcal{G}_\alpha$ is actually a positive multiplication operator in $X$, thus its spectrum is quite clear (for example see Liang et al. \cite[Proposition 2.7]{liang2019principal}), that is 
		$$
		\sigma(\mathcal{G}_\alpha)=\left[\min_{x\in\overline{\Omega}}G_\alpha(x), \;\max_{x\in\overline{\Omega}}G_\alpha(x)\right], 
		$$
		where $\min_{x\in\overline{\Omega}}G_\alpha(x)$ and $\max_{x\in\overline{\Omega}}G_\alpha(x)$ satisfy the following equations respectively 
		$$
		\min_{x\in\overline{\Omega}}G_\alpha(x)=\min_{x\in\overline{\Omega}}\int_{0}^{\hat{a}}\beta(a, x)e^{-(\alpha+D)a}\pi(0, a, x)da
		$$
		and
		$$
		\max_{x\in\overline{\Omega}}G_\alpha(x)=\max_{x\in\overline{\Omega}}\int_{0}^{\hat{a}}\beta(a, x)e^{-(\alpha+D)a}\pi(0, a, x)da.
		$$
		Observe that for any $x\in\overline{\Omega}$, $\alpha\to G_\alpha(x)$ is decreasing with respect to $\alpha\in V$ from $\infty$ to $0$ when $\hat{a}<\infty$, and from $\lim\limits_{\al\to-D-\widetilde{\mu}}G_\al(x)\ge\widehat{R}>1$ to $0$ when $\hat{a}=\infty$ respectively, due to Assumption \ref{mu}. It follows from $\alpha\in\rho(\mathcal{B}_1+\mathcal{C})\cap V\Leftrightarrow \al\in V \text{ and } 1\in\rho(\mathcal{G}_\alpha)$ that $(\alpha^{**}, \infty)\subset\rho(\mathcal{B}_1+\mathcal{C})$, where $\alpha^{**}$ satisfies the following equation
		$$
		\max_{x\in\overline{\Omega}}G_{\alpha^{**}}(x)=\max_{x\in\overline{\Omega}}\int_{0}^{\hat{a}}\beta(a, x)e^{-(\alpha^{**}+D)a}\pi(0, a, x)da=1.
		$$
		Hence the result \eqref{max} is desired, $s(\mathcal{B}_1+\mathcal{C})=\alpha^{**}>-D-\widetilde{\mu}$. In addition, $\mathcal{B}_1+\mathcal{C}$ is resolvent positive due to the fact that $(\alpha I-\mathcal{B}_1-\mathcal{C})^{-1}$ is positive by \eqref{resolventsolution1} and $\rho(\mathcal{B}_1+\mathcal{C})$ contains a ray $(\alpha^{**}, \infty)$. Hence the proof is done.
	\end{proof}
	
\begin{remark}\label{omegaT}
{\rm Since $\mathcal{B}_1+\mathcal{C}$ is resolvent positive, we have $r((\al I -\mathcal{B}_1-\mathcal{C})^{-1})=(\al -s(\mathcal{B}_1+\mathcal{C}))^{-1}$ for all $\al>s(\mathcal{B}_1+\mathcal{C})$ by Corollary \ref{spr}. Now let $(\mathcal{B}_1+\mathcal{C})_0$ be the part of $\mathcal{B}_1+\mathcal{C}$ in $\mathcal{X}_0$. It generates a positive $C_0$-semigroup $\{T_{(\mathcal{B}_1+\mathcal{C})_0}(t)\}_{t\geq0}$ on $\mathcal{X}_0$. Since $\mathcal{B}_1+\mathcal{C}$ is resolvent positive, by Thieme \cite[Proposition 2.4]{thieme1998positive}, we know that $s(\mathcal{B}_1+\mathcal{C})=s((\mathcal{B}_1+\mathcal{C})_0)=\omega(T_{(\mathcal{B}_1+\mathcal{C})_0})$ when $X=L^1(\Omega)$, since now $\mathcal{X}$ is an abstract L space, where $\omega(T)$ denotes the growth bound of $\{T(t)\}_{t\ge0}$, see Definition \ref{growth bound} in Appendix. 
		}
	\end{remark}
	
	\subsection{Characterization of $s(\mathcal{A})$}
	Next we will prove that $\mathcal{A}$ is resolvent positive and provide a precise characterization of its spectral bound $s(\mathcal{A})$. Recall that $\{\mathcal{U}(\tau, a)\}_{0\le\tau\le a<\hat{a}}$ is defined in \eqref{U} and let us define for $\lambda\in \widetilde V$ (see \eqref{I interval} for the definition of $\widetilde V$) a operator $\mathcal M_\lambda\in \mathcal L(X)$ by
	\begin{equation}\label{Glambda} 
		\mathcal{M}_\lambda\eta=\int_{0}^{\hat{a}}\beta(a, \cdot)e^{-\lambda a}\mathcal{U}(0, a)\eta\, da,\; \forall\eta\in X. 
	\end{equation}			    
	Then the following proposition holds.		    
	\begin{proposition}\label{sA}
		Under Assumption \ref{mu}, there exists $\lambda_0\in \widetilde V$ such that			    
		\begin{equation}\label{rM}
			r(\mathcal{M}_{\lambda_0})=r\left(\int_{0}^{\hat{a}}\beta(a, \cdot)e^{-\lambda_0a}\mathcal{U}(0, a)\,da\right)=1.
		\end{equation}		    
		Moreover, the operator $\mathcal{A}$ is resolvent positive and its spectral bound satisfies $s(\mathcal{A})=\lambda_0$.
	\end{proposition}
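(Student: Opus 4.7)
The plan is to mirror the factorisation argument of Proposition~\ref{rGalpha}, replacing the age-without-diffusion family $\{\Pi(\tau,a)\}$ by the diffusive family $\{\mathcal{U}(\tau,a)\}$ and the multiplication operator $\mathcal{G}_\alpha$ by $\mathcal{M}_\lambda$. For any $\lambda\in\rho(\mathcal{B})$ I will write
$$(\lambda I-\mathcal{A})^{-1}=(\lambda I-\mathcal{B})^{-1}\bigl[I-\mathcal{C}(\lambda I-\mathcal{B})^{-1}\bigr]^{-1},$$
and, using the explicit formula \eqref{R_lambda} for $R_\lambda=(\lambda I-\mathcal{B})^{-1}$ together with the definition of $\mathcal{C}$, show that the auxiliary equation $[I-\mathcal{C}(\lambda I-\mathcal{B})^{-1}](\eta,\varphi)=(\widehat\eta,\widehat\varphi)$ collapses to
$$(I-\mathcal{M}_\lambda)\eta=\widehat\eta+\int_0^{\hat a}\beta(a,\cdot)\int_0^a e^{-\lambda(a-s)}\mathcal{U}(s,a)\widehat\varphi(s)\,ds\,da.$$
This yields the equivalence $\lambda\in\rho(\mathcal{A})\cap\widetilde V \Leftrightarrow \lambda\in\widetilde V \text{ and } 1\in\rho(\mathcal{M}_\lambda)$, together with a closed, positive expression for $(\lambda I-\mathcal{A})^{-1}$ whenever $1\in\rho(\mathcal{M}_\lambda)$.

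Next I analyse $\lambda\mapsto r(\mathcal{M}_\lambda)$ on $\widetilde V$. The integrand $\beta(a,\cdot)e^{-\lambda a}\mathcal{U}(0,a)$ is pointwise decreasing in $\lambda$, so by the standard monotonicity of the spectral radius of positive operators on $X$ the map $\lambda\mapsto r(\mathcal{M}_\lambda)$ is nonincreasing; it is moreover continuous since $\lambda\mapsto \mathcal{M}_\lambda$ is continuous in operator norm (by \eqref{EB} and dominated convergence) and the cone $X_+$ is normal and generating. The bound \eqref{EB} combined with $\beta\in L^\infty$ yields $r(\mathcal{M}_\lambda)\leq \|\mathcal{M}_\lambda\|_{\mathcal L(X)}\to 0$ as $\lambda\to+\infty$.

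The crucial step is producing some $\lambda_*\in\widetilde V$ with $r(\mathcal{M}_{\lambda_*})\geq 1$. For this I test $\mathcal{M}_\lambda$ on $\phi_0$, the positive principal eigenfunction of $-K+I$ associated with $\lambda^0$, using the explicit subsolution $v(a,x):=\overline\pi(a)e^{-D\lambda^0 a}\phi_0(x)$ of \eqref{steadystatequation}: since $(K-I)\phi_0=-\lambda^0\phi_0$ and $\mu(a,\cdot)\leq\overline\mu(a)$ by Assumption~\ref{Ass1.1}-(iv),
$$\partial_a v - D(K-I)v+\mu(a,\cdot)v=\bigl(\mu(a,\cdot)-\overline\mu(a)\bigr)v\leq 0.$$
A comparison principle for \eqref{steadystatequation} (valid thanks to positivity of $\mathcal{U}$) then gives $\mathcal{U}(0,a)\phi_0 \geq \overline\pi(a)e^{-D\lambda^0 a}\phi_0$ pointwise, whence
$$\mathcal{M}_\lambda\phi_0\geq\left(\int_0^{\hat a}\underline\beta(a)e^{-(\lambda+D\lambda^0)a}\overline\pi(a)\,da\right)\phi_0.$$
Choosing $\lambda=\widetilde\lambda$ from Remark~\ref{mu2} makes the bracketed coefficient equal to $\widehat R>1$, and the classical lower bound $r(\mathcal{M}_{\widetilde\lambda})\geq \widehat R>1$ for a positive operator dominating a positive multiple of a positive vector follows.

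Combining continuity and monotonicity with the two asymptotic regimes produces $\lambda_0\in\widetilde V$ satisfying \eqref{rM}. For $\lambda>\lambda_0$ one has $r(\mathcal{M}_\lambda)<1$, so the Neumann series yields $1\in\rho(\mathcal{M}_\lambda)$; by the equivalence above, $\lambda\in\rho(\mathcal{A})$ and the resolvent formula is a composition of positive operators, giving resolvent positivity of $\mathcal{A}$ together with $(\lambda_0,\infty)\subset\rho(\mathcal{A})$. Conversely, $r(\mathcal{M}_{\lambda_0})=1\in\sigma(\mathcal{M}_{\lambda_0})$ forces $\lambda_0\in\sigma(\mathcal{A})$ through the same factorisation, so $s(\mathcal{A})=\lambda_0$. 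The hardest step will be the comparison argument producing the pointwise bound $\mathcal{U}(0,a)\phi_0\geq\overline\pi(a)e^{-D\lambda^0 a}\phi_0$, which requires adapting the maximum principle to the nonlocal operator $K-I$ driving $\mathcal{U}$ and ensuring the argument works uniformly in both settings $X=C(\overline\Omega)$ and $X=L^1(\Omega)$.
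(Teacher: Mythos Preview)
Your factorisation through $I-\mathcal{M}_\lambda$, the explicit resolvent formula, the resulting equivalence $\lambda\in\rho(\mathcal{A})\cap\widetilde V\Leftrightarrow 1\in\rho(\mathcal{M}_\lambda)$, and the final identification $s(\mathcal{A})=\lambda_0$ are all essentially what the paper does. Your lower bound $r(\mathcal{M}_{\widetilde\lambda})>1$ via the subsolution $\overline\pi(a)e^{-D\lambda^0 a}\phi_0$ is a clean direct version of the paper's comparison $\mathcal{M}_\lambda\geq \mathcal{C}_\lambda:=\int_0^{\hat a}\underline\beta(a)e^{-\lambda a}\overline\pi(a)e^{D(K-I)a}\,da$ combined with their earlier result $r(\mathcal{C}_{\lambda_1})=1$; the two approaches are equivalent once applied to $\phi_0$.

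There is, however, a genuine gap in your treatment of the map $\lambda\mapsto r(\mathcal{M}_\lambda)$. You assert continuity on the grounds that $\lambda\mapsto\mathcal{M}_\lambda$ is norm-continuous and $X_+$ is normal and generating. This is not sufficient: the spectral radius is only upper semicontinuous in the operator norm, even for positive operators on ordered Banach spaces with normal generating cone. Combined with your monotonicity, upper semicontinuity gives only left-continuity of $\lambda\mapsto r(\mathcal{M}_\lambda)$; a downward jump from a value $>1$ to a value $<1$ is not excluded, and in that scenario no $\lambda_0$ with $r(\mathcal{M}_{\lambda_0})=1$ exists. The same gap reappears when you write ``for $\lambda>\lambda_0$ one has $r(\mathcal{M}_\lambda)<1$'': bare monotonicity does not give the strict inequality, so uniqueness of $\lambda_0$ is unproven.

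The paper closes both holes simultaneously by a different route: it observes that $\lambda\mapsto\mathcal{M}_\lambda$ is \emph{completely monotonic} (since $(-1)^n\partial_\lambda^n\mathcal{M}_\lambda\phi=\int_0^{\hat a}\beta(a,\cdot)a^ne^{-\lambda a}\mathcal{U}(0,a)\phi\,da\in X_+$), and then invokes Kato's generalised Kingman theorem (via Thieme \cite[Theorem~2.5]{thieme1998remarks}) to conclude that $\lambda\mapsto r(\mathcal{M}_\lambda)$ is log-convex, hence continuous on $\widetilde V$. Log-convexity together with $r(\mathcal{M}_\lambda)\to 0$ then forces the map to be strictly decreasing wherever it is positive, giving uniqueness of $\lambda_0$. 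You should replace your continuity claim by this complete-monotonicity argument.
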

	
	\begin{proof}
		Consider the resolvent equation
		$$ 
		(0, \phi)=(\lambda I -\mathcal{A})^{-1}(\eta, \varphi),\; \forall\,(\eta, \varphi)\in\mathcal{X}, \;\lambda\in\rho(\mathcal{A}),
		$$
		following the same procedure in Proposition \ref{rGalpha}, we can obtain
		\begin{eqnarray}\label{(lambda-mathcal{A})^{-1}}
			&&[(\lambda I - \mathcal{A})^{-1}(\eta, \varphi)](a)\nonumber\\
			&& \quad =\big(0, \quad e^{-\lambda a}\mathcal{U}(0, a)(I-\mathcal{M}_\lambda)^{-1}\left[\int_{0}^{\hat{a}}\beta(s, \cdot)\int_{0}^{s}e^{-\lambda(s-\tau)}\mathcal{U}(\tau, s)\varphi(\tau)d\tau ds+\eta\right]\nonumber\\
			&&\quad\quad +\int_{0}^{a}e^{-\lambda(a-\tau)}\mathcal{U}(\tau, a)\varphi(\tau)d\tau\big). 
		\end{eqnarray}
		It follows that $\lambda\in\rho(\mathcal{A})\cap \widetilde V\Leftrightarrow \la\in \widetilde V \text{ and } 1\in\rho(\mathcal{M}_\lambda)$. Now define (recalling Assumption \ref{mu} and Remark \ref{mu2})
		$$ 
		\mathcal{C}_\lambda=\int_{0}^{\hat{a}}\underline{\beta}(a)e^{-\lambda a}\overline{\pi}(a)e^{D(K-I)a}da\in \mathcal{L}(X).
		$$ 
		Then we have $\mathcal{M}_\lambda\geq\mathcal{C}_\lambda$ in the positive operator sense.
		
		Now we claim that $r(\mathcal{M}_\lambda)$ is decreasing and log-convex (and thus continuous) with respect to the parameter $\lambda\in \widetilde V$. 
		
		\begin{claim}\label{convex}
			$r(\mathcal{M}_\lambda)$ is decreasing and log-convex with respect to $\lambda\in \widetilde V$. 
		\end{claim} 
		
		For now let us assume that the claim is true. On the other hand, from Theorem 2.2(v) in \cite{Kang2020Age} there exists a unique simple real value $\lambda_1$ such that $r(\mathcal{C}_{\lambda_1})=1$ for $\hat{a}<\infty$, and for $\hat{a}=\infty$ due to Assumption \ref{mu} and Remark \ref{mu2}. Moreover, one also has
		$$
		\lambda_1=\varpi-D\lambda^0 \;\text{ with }\int_0^{\hat{a}}\underline{\beta}(a)e^{-\varpi a}\overline{\pi}(a)da=1.
		$$
		Therefore, by the theory of positive operators (see Marek \cite{marek1970frobenius}), 
		$$ 
		r(\mathcal{M}_{\lambda_1})\geq r(\mathcal{C}_{\lambda_1})=1. 
		$$
		Moreover, one has $\lim\limits_{\lambda\rightarrow\infty}r(\mathcal{M}_\lambda)=0$. Since $\la\to r(\mathcal{M}_\lambda)$ is continuous and decreasing by Claim \ref{convex}, there exists a real $\lambda_0\ge\la_1$ such that $r(\mathcal{M}_{\lambda_0})=1$. 
		
		Next we prove that $\lambda_0$ is unique. Assume that there is $\lambda_{\varsigma}<\lambda_{\mu}$ such that $r(\mathcal{M}_{\lambda_{\varsigma}})=r(\mathcal{M}_{\lambda_{\mu}})=1$. Since $\lambda\rightarrow r(\mathcal{M}_\lambda)$ is decreasing and log-convex by Claim \ref{convex}, it follows that $r(\mathcal{M}_\lambda)=1$ for all $\lambda\geq\lambda_{\varsigma}$. This contradicts the fact that $r(\mathcal{M}_\lambda)\rightarrow0$ as $\lambda\rightarrow\infty$. Thus there is a unique $\lambda_0\in\mathbb{R}$ such that $r(\mathcal{M}_{\lambda_0})=1$. This is equivalent to the uniqueness of $\lambda_0$. Moreover, we have shown that the mapping $\lambda\rightarrow r(\mathcal{M}_\lambda)$ is either strictly decreasing on the interval $\widetilde V$ or strictly decreasing on some interval $(-D\la^0-\widetilde{\mu}, \la_2)$ with $r(\mathcal M_\la)=0$ for all $\la\ge\la_2$. In addition, since $\mathcal{M}_\lambda$ is positive, $1=r(\mathcal{M}_{\lambda_0})\in\sigma(\mathcal{M}_{\lambda_0})\neq\emptyset$, which implies that $\lambda_0\in\sigma(\mathcal{A})$, thus $\sigma(\mathcal{A})\neq\emptyset$.
		
		In addition, for any $\lambda\in\mathbb{R}$, when $\la>\la_0$ we have $r(\mathcal{M}_{\la})<r(\mathcal{M}_{\la_0})=1$, and thus $(I-\mathcal{M}_{\la})^{-1}$ exists. It follows that $\la\in\rho(\mathcal{A})$ when $\la>\la_0$, which implies that $\rho(\mathcal{A})$ contains a ray $(\la_0, \infty)$ and $(\la I -\mathcal{A})^{-1}$ is obviously a positive operator by \eqref{(lambda-mathcal{A})^{-1}} for all $\la>\la_0$. Thus $\mathcal{A}$ is a resolvent positive operator. 
		
		Finally $\la_0$ is larger than any other real spectral value in $\sigma(\mathcal{A})$. It follows that 
		$$
		\la_0=s_\mathbb{R}(\mathcal{A}):=\sup\{\lambda\in\mathbb{R}; \lambda\in\sigma(\mathcal A)\}. 
		$$
		Now we have known that $\mathcal{A}$ is a resolvent positive operator. But since $\mathcal{X}$ is a Banach space with a normal and generating cone $\mathcal{X}^+$ defined in Section \ref{Notations} and $s(\mathcal{A})\geq\la_0>-\infty$ due to $\la_0\in\sigma(\mathcal{A})$, we can conclude from Theorem \ref{sR} that $s(\mathcal{A})=s_\mathbb{R}(\mathcal{A})=\la_0$. Hence the proof is complete.
	\end{proof}
	
	Now let us prove the above claim.							
	\begin{proof}[Proof of Claim \ref{convex}]
		We use the generalized Kingman's theorem from Kato \cite{kato1982superconvexity} to show it. First claim that $\lambda\rightarrow \mathcal{M}_\lambda$ is completely monotonic. Then, $\lambda\rightarrow r(\mathcal{M}_\lambda)$ is decreasing and super-convex by Thieme \cite[Theorem 2.5]{thieme1998remarks} and hence log-convex. By the definition from Thieme \cite{thieme1998remarks}, an infinitely often differentiable function $f:(\Delta, \infty)\rightarrow Z_+$ is called \textit{completely monotonic} if 
		$$ 
		(-1)^nf^{(n)}(\lambda)\in Z_+, \;\forall \lambda>\Delta, n\in\mathbb{N}, 
		$$
		where $Z_+$ is a normal and generating cone of an ordered Banach space $Z$ and $(\Delta, \infty)$ is the domain of $f$. A family $\{F_\lambda\}_{\lambda\in\Lambda}$ of positive operators on $Z$ is called \textit{completely monotonic} if $f(\lambda)=F_\lambda x$ is completely monotonic for every $x\in Z_+$. 
		
		For our case, $\mathcal{M}_\lambda$ is indeed infinitely often differentiable with respect to $\lambda\in\widetilde V$ and 
		$$ 
		(-1)^n\mathcal{M}_\lambda^{(n)}\phi=\int_{0}^{\hat{a}}\beta(a, \cdot)a^ne^{-\lambda a}\mathcal{U}(0, a)\phi da\in X_+, \;\lambda\in\widetilde V, n\in\mathbb{N}, \phi\in X_+.
		$$	
		Thus, our result follows.
	\end{proof}
	
	\begin{remark}\label{sasb}
		{\rm (i) As Remark \ref{omegaT} shown, for all $\la>s(\mathcal{A})$ one has $r((\la I -\mathcal{A})^{-1})=(\la -s(\mathcal{A}))^{-1}$. Let $\mathcal{A}_0$ be the part of $\mathcal{A}$ in $\mathcal{X}_0$. Then it generates a positive $C_0$-semigroup $\{T_{\mathcal{A}_0}(t)\}_{t\geq0}$ on $\mathcal{X}_0$. Since $\mathcal{A}$ is also resolvent positive, we have $s(\mathcal{A})=s(\mathcal{A}_0)=\omega(T_{\mathcal{A}_0})$ when $X=L^1(\Omega)$.
			
		(ii) Note that we have $s(\mathcal{A})\geq s(\mathcal{B}_1+\mathcal{C})$ now. In fact, $\mathcal{A}$ is resolvent positive from Proposition \ref{sA} and thus Theorem \ref{rFlambda} applies to rule out the case $s(\mathcal{A})< s(\mathcal{B}_1+\mathcal{C})$. But we cannot obtain the strict relation, i.e. $s(\mathcal{A})> s(\mathcal{B}_1+\mathcal{C}),$ even if $e^{-Da}\Pi(0, a)f$ is strictly smaller than $\mathcal{U}(0, a)f$ in $L^1((0, \hat a), X)$ for any $f\in X$, because $\alpha^{**}$ and $\lambda_0$ are obtained by taking the spectral radii of the operators equal to $1$, where a limit process occurs in which the strict relation may not be preserved. However, if $r(\mathcal{G}_{\alpha})$ and $r(\mathcal{M}_\lambda)$ are eigenvalues of $\mathcal{G}_\alpha$ and $\mathcal{M}_\lambda$ respectively, we could obtain the strict relation, see Marek \cite[Theorem 4.3]{marek1970frobenius}, which is the Frobenius theory for positive operators.
		
%
       }
	\end{remark}
	
	\subsection{A Special Case:  $s(\mathcal{A})>s(\mathcal{B}_1+\mathcal{C})$}
	
	Next, we give a special case where $s(\mathcal{A})>s(\mathcal{B}_1+\mathcal{C})$ holds.
	
	\begin{proposition}\label{sAsB}
		Assume that $\mu(a, x)\equiv\mu(a)$ and $\beta(a, x)\equiv\beta(a)$, then one has $s(\mathcal{A})>s(\mathcal{B}_1+\mathcal{C})$.
	\end{proposition}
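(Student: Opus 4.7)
The plan is to exploit the fact that when $\beta$ and $\mu$ depend only on $a$, both the evolution family $\mathcal{U}(0,a)$ and the characterizing operator $\mathcal{M}_\lambda$ from Proposition \ref{sA} decouple in a very clean way, reducing the problem to a one-dimensional comparison of scalar exponential integrals.

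First I would obtain an explicit expression for $\mathcal{U}(0,a)$. Since $\mu(a)$ is a scalar (multiplication by a function of $a$ alone) it commutes with the bounded operator $D(K-I_X)$. Multiplying equation \eqref{steadystatequation} by the integrating factor $\pi(0,a)^{-1}$ one shows directly that $\mathcal{U}(0,a) = \pi(0,a)\,e^{Da(K-I_X)}$. Plugging this into the definition \eqref{Glambda} gives
\begin{equation*}
\mathcal{M}_\lambda = \int_0^{\hat a}\beta(a)\,e^{-\lambda a}\pi(0,a)\,e^{Da(K-I_X)}\,da.
\end{equation*}
Now let $\phi_0>0$ be the principal eigenfunction of $-K+I_X$ introduced just after \eqref{I interval}, so $(K-I_X)\phi_0=-\lambda^0\phi_0$ with $\lambda^0\in(0,1)$. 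Then $e^{Da(K-I_X)}\phi_0=e^{-Da\lambda^0}\phi_0$, and consequently
\begin{equation*}
\mathcal{M}_\lambda\phi_0 = c_\lambda\,\phi_0,\qquad c_\lambda:=\int_0^{\hat a}\beta(a)e^{-(\lambda+D\lambda^0)a}\pi(0,a)\,da.
\end{equation*}

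The key technical step will be to identify $c_\lambda$ with the spectral radius $r(\mathcal{M}_\lambda)$. I would argue this via the analytic functional calculus: writing $\mathcal{M}_\lambda=F(K)$ with the entire function $F(z)=\int_0^{\hat a}\beta(a)e^{-\lambda a}\pi(0,a)e^{Da(z-1)}da$, the Dunford spectral mapping theorem yields $\sigma(\mathcal{M}_\lambda)=F(\sigma(K))$. Since $F$ is strictly positive and increasing on $[0,r(K)]$, and $\sigma(K)\subset\overline{B(0,r(K))}$, the maximum of $|F|$ on $\sigma(K)$ is attained at $r(K)=1-\lambda^0$, giving $r(\mathcal{M}_\lambda)=F(1-\lambda^0)=c_\lambda$. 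This is the step I expect to require the most care; an alternative is to invoke the Perron-type theory for positive operators commuting with an irreducible one, using that $\phi_0$ is a strongly positive eigenfunction shared by $K$ and $\mathcal{M}_\lambda$.

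Once this is established, the conclusion is a direct comparison. Since $\beta$ and $\mu$ are independent of $x$, the function $G_\alpha$ in \eqref{Galpha} is constant in $x$, and Proposition \ref{rGalpha} gives $\alpha^{**}=s(\mathcal{B}_1+\mathcal{C})$ as the unique solution of
\begin{equation*}
\int_0^{\hat a}\beta(a)e^{-(\alpha^{**}+D)a}\pi(0,a)\,da=1.
\end{equation*}
On the other hand Proposition \ref{sA} gives $\lambda_0=s(\mathcal{A})$ as the unique solution of $r(\mathcal{M}_{\lambda_0})=1$, which by the previous step reads
\begin{equation*}
\int_0^{\hat a}\beta(a)e^{-(\lambda_0+D\lambda^0)a}\pi(0,a)\,da=1.
\end{equation*}
Uniqueness of the solution to the scalar equation $\int_0^{\hat a}\beta(a)e^{-\gamma a}\pi(0,a)da=1$ in $\gamma$ forces $\lambda_0+D\lambda^0=\alpha^{**}+D$, hence $s(\mathcal{A})-s(\mathcal{B}_1+\mathcal{C})=\lambda_0-\alpha^{**}=D(1-\lambda^0)>0$, using $D>0$ and $\lambda^0<1$.
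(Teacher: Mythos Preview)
Your proof is correct and essentially identical to the paper's: both reduce to the scalar identities $\alpha^{**}+D=\varpi=\lambda_0+D\lambda^0$ (with $\varpi$ the unique root of $\int_0^{\hat a}\beta(a)e^{-\varpi a}\pi(0,a)\,da=1$) and conclude from $0<\lambda^0<1$. The only difference is that the paper outsources the key step $r(\mathcal{M}_{\varpi-D\lambda^0})=1$ to a citation (Kang et al.~\cite{Kang2020Age}, Theorem~2.2(v)), whereas you derive it directly via the functional calculus for $F(K)$ together with $r(K)=1-\lambda^0$.
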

	
	\begin{proof}
		Note that when $\mu(a, x)\equiv\mu(a)$ and $\beta(a, x)\equiv\beta(a)$, 
		$s(\mathcal{B}_1+\mathcal{C})=\alpha^{**}$ and $s(\mathcal{A})=\lambda_0$ satisfies the following equations
		\begin{equation}\label{alpha**}
			\int_{0}^{\hat{a}}\beta(a)e^{-\alpha^{**}a}e^{-Da}e^{-\int_{0}^{a}\mu(s)ds}da=1
		\end{equation}  
		and 
		\begin{equation}\label{lambda0}
			r(\mathcal{M}_{\lambda_0})=r\left(\int_{0}^{\hat{a}}\beta(a)e^{-\lambda_0a}e^{-\int_{0}^{a}\mu(s)ds}e^{D(K-I)a}da\right)=1,
		\end{equation} 
		respectively. Further, in Kang et al. \cite[Theorem 2.2 (v)]{Kang2020Age} we have shown that $\mathcal{M}_{\varpi-D\lambda^0}$ has an eigenvalue associated of $1$ with an eigenfunction $\phi_0\in X_+\setminus\{0\}$ and 
		\begin{equation}\label{nu}
			r(\mathcal{M}_{\varpi-D\lambda^0})=1, 
		\end{equation}  
		where $\varpi$ is the principal eigenvalue of an age-structured operator; i.e. $\varpi$ satisfies the following characteristic equation
		\begin{equation}\label{gamma0}
			\int_{0}^{\hat{a}}\beta(a)e^{-\varpi a}e^{-\int_{0}^{a}\mu(s)ds}da=1.
		\end{equation} 
		Now comparing \eqref{alpha**} with \eqref{gamma0} and \eqref{nu} with \eqref{lambda0}, we have $\alpha^{**}=\varpi-D$ while $\lambda_0=\varpi-D\lambda^0$. Thanks to $0<\lambda^0<1$ one has that $\lambda_0>\alpha^{**}$, which implies that $s(\mathcal{A})>s(\mathcal{B}_1+\mathcal{C})$.
	\end{proof}
	
	\begin{remark}
		{\rm In the above case when $\beta$ and $\mu$ are independent of the spatial variable, age structure and nonlocal diffusion are completely decoupled, thus the spectrum becomes simpler. For instance, $\mathcal{G}_\alpha$ becomes a scalar value instead of a multiplication operator, while for $\mathcal{M}_\lambda$ by estimating the essential growth rate of nonlocal diffusion semigroup, one could obtain the spectral gap; i.e. $r_e(\mathcal{M}_\lambda)<r(\mathcal{M}_\lambda),$ and then use the generalized Krein-Rutman theorem (Edmunds et al. \cite{edmunds1972non}, Nussbaum \cite{nussbaum1981eigenvectors}, Zhang \cite{zhang2016generalized}) to show that $r(\mathcal{M}_\lambda)$ becomes an eigenvalue of $\mathcal{M}_\lambda$, the interested readers can refer to Kang et al. \cite[Theorem 2.2 (v)]{Kang2020Age}. Here $r_e(A)$ denotes the essential spectral radius of $A$.}
	\end{remark}
	
	\subsection{A Key Proposition}
	Next we give a key proposition on the solvability of some equation without diffusion, which is important in studying the effects of the diffusion rate and diffusion range on the principal eigenvalues later. Consider the problem
	\begin{equation}\label{key}
		\begin{cases}
			\partial_a u(a, x)=-(\alpha+D)u(a, x)-\mu(a, x)u(a, x),&\;(a, x)\in(0, \hat{a})\times\R^N,\\
			u(0, x)=\int_{0}^{\hat{a}}\beta(a, x)u(a, x)da,&\; x\in\R^N.
		\end{cases}
	\end{equation}
	
	\begin{proposition}\label{Galphax}
		Let Assumption \ref{mu} hold. Then there exists a continuous function $x\rightarrow\alpha(x): \R^N\rightarrow\mathbb{R}$ such that for any $x\in\R^N$, equation \eqref{key} with $\alpha=\alpha(x)$ has a positive solution $a\to u(a, x)\in W^{1, 1}(0, \hat{a})$ and 
		\begin{equation}\label{G_alpha} 
			\int_{0}^{\hat{a}}\beta(a, x)e^{-(\alpha(x)+D)a}\pi(0, a, x)da=1, \;\forall x\in\R^N. 
		\end{equation}
		Moreover, $\alpha(x)\leq\alpha^{**}$ for all $x\in\overline{\Omega}$, where $\alpha^{**}$ is defined in \eqref{max}.
	\end{proposition}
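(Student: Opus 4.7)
The plan is to solve equation \eqref{key} pointwise in $x$: for each fixed $x\in\R^N$ this is a scalar linear first-order ODE in $a$ coupled with a nonlocal renewal boundary condition. Any $W^{1,1}(0,\hat a)$ solution must take the form $u(a,x) = u(0,x)\,e^{-(\alpha+D)a}\pi(0,a,x)$, and plugging this ansatz into the boundary condition shows that a nontrivial (positive) solution exists if and only if the compatibility condition $G_\alpha(x) = 1$ holds, where $G_\alpha$ is the function from \eqref{Galpha}. The problem therefore reduces to constructing a continuous selection $x\mapsto \alpha(x)$ satisfying $G_{\alpha(x)}(x)=1$ for every $x\in\R^N$.

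Fix $x\in\R^N$ and study $\alpha\mapsto G_\alpha(x)$ on the interval $V$. Differentiation under the integral (justified by exponential decay and Assumption \ref{Ass1.1}(ii)-(iii)) gives
\begin{equation*}
\frac{d}{d\alpha}G_\alpha(x) = -\int_0^{\hat a} a\,\beta(a,x)e^{-(\alpha+D)a}\pi(0,a,x)\,da,
\end{equation*}
which is strictly negative: indeed, Assumption \ref{Ass1.1}(iv) together with Remark \ref{rkk} (which uses Assumption \ref{mu} when $\hat a=\infty$) forces $\beta(\cdot,x)$ not to be almost everywhere zero. Moreover $\lim_{\alpha\to\infty}G_\alpha(x)=0$, while $G_\alpha(x)\to+\infty$ as $\alpha\to-\infty$ when $\hat a<\infty$ and $\lim_{\alpha\to -D-\widetilde\mu}G_\alpha(x)\geq \widehat R>1$ when $\hat a=\infty$ by Remark \ref{rkk}. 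The intermediate value theorem then delivers a unique $\alpha(x)\in V$ with $G_{\alpha(x)}(x)=1$, and the associated positive $W^{1,1}$-profile $a\mapsto u(a,x)$ follows immediately from the explicit form above.

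For continuity of $x\mapsto\alpha(x)$, I would first show that $(x,\alpha)\mapsto G_\alpha(x)$ is jointly continuous on $\R^N\times V$ by dominated convergence, using Assumption \ref{Ass1.1}(i)-(ii): on any local box $\{|x-x_0|\le\delta\}\times[\alpha_1,\alpha_2]$ with $[\alpha_1,\alpha_2]\Subset V$ the integrand is dominated by a constant multiple of $e^{-(\alpha_1+D+\widetilde\mu)a}$, integrable over $(0,\hat a)$ thanks to Assumption \ref{Ass1.1}(iii). A standard argument combining joint continuity with the strict monotonicity just established then yields continuity of $\alpha(\cdot)$. Finally, the bound $\alpha(x)\leq\alpha^{**}$ on $\overline\Omega$ follows directly from \eqref{max}: for $x\in\overline\Omega$ one has $G_{\alpha^{**}}(x)\leq\max_{y\in\overline\Omega}G_{\alpha^{**}}(y)=1=G_{\alpha(x)}(x)$, and decreasingness of $G_\cdot(x)$ in $\alpha$ gives $\alpha(x)\leq\alpha^{**}$.

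The only real subtlety is producing the uniform dominating function when $\hat a=\infty$; this is precisely the role played simultaneously by Assumption \ref{Ass1.1}(iii) and Assumption \ref{mu}, which together ensure both the existence of $\alpha(x)$ (via the lower bound $\widehat R>1$ near the left endpoint of $V$) and uniform absolute convergence of the integrals on compact $\alpha$-subsets of $V$ for all $x\in\R^N$.
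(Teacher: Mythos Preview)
Your proposal is correct and follows essentially the same approach as the paper: both solve \eqref{key} explicitly, reduce to the compatibility condition $G_{\alpha}(x)=1$, use strict monotonicity in $\alpha$ together with the endpoint behavior (from Assumption \ref{mu} and Remark \ref{rkk}) to get a unique $\alpha(x)$, and then deduce $\alpha(x)\le\alpha^{**}$ from \eqref{max}. The only cosmetic difference is that the paper invokes the implicit function theorem for the continuity of $x\mapsto\alpha(x)$, whereas you argue directly from joint continuity of $(x,\alpha)\mapsto G_\alpha(x)$ and strict monotonicity in $\alpha$; both routes are standard and equivalent here.
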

	
	\begin{proof}
		Solving the first equation of \eqref{key} explicitly, we obtain a positive solution 
		$$ 
		u(a, \cdot)=e^{-(\alpha+D)a}\pi(0, a, \cdot)u(0, \cdot) 
		$$
		provided $u(0, x)>0$. Then plugging it into the integral initial condition we get that
		$$ 
		\int_{0}^{\hat{a}}\beta(a, x)e^{-(\alpha+D)a}\pi(0, a, x)da=1. 
		$$
		Now define 
		$$
		G(\alpha, x):=\int_{0}^{\hat{a}}\beta(a, x)e^{-(\alpha+D)a}\pi(0, a, x)da. 
		$$
		Observe that $G: V\times\R^N\rightarrow(0, \infty)$ is continuously differentiable with respect to $\alpha$ and continuous with respect to $x$ due to assumptions of $\beta$ and $\mu$, respectively, where $V$ is defined in \eqref{I interval}. 
	
		Next using Assumption \ref{mu} and the subsequent Remark \ref{rkk}, for any $x\in\R^N$, one has that
		\begin{eqnarray}
			\lim\limits_{\alpha\to -\infty}G(\alpha, x)=\infty, &\; \lim\limits_{\alpha\to \infty}G(\alpha, x)=0, \;\text{ when }\hat{a}<\infty,\nonumber\\
			\lim\limits_{\alpha\to \widehat{\lambda}}G(\alpha, x)>1, &\; \lim\limits_{\alpha\to \infty}G(\alpha, x)=0, \;\text{ when }\hat{a}=\infty.\nonumber
		\end{eqnarray}
		Thus, for any $x\in\R^N$, thanks to the monotonicity of $G$ with respect to $\alpha$, there always exists a unique $\alpha(x)$ such that \eqref{G_alpha} holds. Moreover,
		\begin{eqnarray}\label{>0}
			\frac{\partial G(\alpha, x)}{\partial\alpha}=-\int_{0}^{\hat{a}}\beta(a, x)ae^{-(\alpha+D)a}\pi(0, a, x)da<0, \;\forall x\in\R^N.
		\end{eqnarray} 
		The continuity of $\alpha$ comes from implicit function theorem. In addition, one has that $\alpha(x)\leq\alpha^{**}$ for $x\in\overline\Omega$ by \eqref{max}, since $\alpha^{**}=\max_{x\in\overline{\Omega}}\alpha(x)$ due to the monotonicity of $G_\alpha$ with respect to $\alpha$. Thus the proposition is proved.
	\end{proof}
	
	\subsection{Compact Perturbation}		
	In this subsection we will show that $\mathcal{A}$ is a compact and positive perturbation of $\mathcal{B}_1+\mathcal{C}$.
	\begin{proposition}\label{compact}
		For any real $\alpha>\alpha^{**}$, $\mathcal{B}_2(\alpha I -\mathcal{B}_1-\mathcal{C})^{-1}$ is a compact operator in $\mathcal{X}$.
	\end{proposition}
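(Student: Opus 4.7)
The plan is to exploit the compactness of the spatial convolution operator $K$ together with the age-regularity of the resolvent. First, recall that $K\in\mathcal{L}(X)$ is compact: since $J\in C(\mathbb{R}^N)$ is bounded and $\Omega\subset\mathbb{R}^N$ is bounded, $K$ is an integral operator with continuous kernel on a bounded domain and is compact on both $X=C(\overline\Omega)$ and $X=L^1(\Omega)$ by standard arguments (uniform equicontinuity of $K\varphi$ for $\varphi$ in the unit ball, together with Arzel\`a--Ascoli). Combining \eqref{resolventsolution1} with \eqref{W_alpha} and using $(\alpha I-\mathcal{B}_1)^{-1}=W_{\alpha+D}$, one obtains, for $\alpha>\alpha^{**}$ and $(\widehat\eta,\widehat\varphi)\in\mathcal{X}$, the identity $(\alpha I-\mathcal{B}_1-\mathcal{C})^{-1}(\widehat\eta,\widehat\varphi)=(0,\phi)$ with
\begin{equation*}
\phi(a)=e^{-(\alpha+D)a}\Pi(0,a)\eta+\int_0^a e^{-(\alpha+D)(a-s)}\Pi(s,a)\widehat\varphi(s)\,ds,
\end{equation*}
where $\eta=(I-\mathcal{G}_\alpha)^{-1}[\,\cdots\,]$ depends boundedly on $(\widehat\eta,\widehat\varphi)$. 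Hence $\mathcal{B}_2(\alpha I-\mathcal{B}_1-\mathcal{C})^{-1}(\widehat\eta,\widehat\varphi)=(0,DK\phi(\cdot))$, so it suffices to show that the map $(\widehat\eta,\widehat\varphi)\mapsto K\phi$ sends bounded subsets of $\mathcal{X}$ into relatively compact subsets of $L^1((0,\hat a),X)$.

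I would verify this via the Fr\'echet--Kolmogorov--Simon compactness criterion for Bochner $L^1$-spaces. Fix a bounded set $B\subset\mathcal{X}$ and write $\mathcal{F}=\{K\phi:(\widehat\eta,\widehat\varphi)\in B\}$. Boundedness of $\mathcal{F}$ is immediate. Next, for any subinterval $[t_1,t_2]\subset(0,\hat a)$ the set $\{K\int_{t_1}^{t_2}\phi(a)\,da:(\widehat\eta,\widehat\varphi)\in B\}$ is the image under the compact operator $K$ of a bounded subset of $X$ (the $X$-bound on $\int_{t_1}^{t_2}\phi(a)\,da$ follows from the mild formula together with $\|\Pi(\tau,a)\|_{\mathcal{L}(X)}\le 1$), hence is relatively compact in $X$. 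Finally, for translation continuity in $L^1$, split $\phi=\phi_1+\phi_2$ into its free and convolution parts: $\phi_1$ depends on $a$ through multiplication by $\pi(0,a,\cdot)$, which is locally Lipschitz in $a$ by Assumption \ref{Ass1.1}(ii), while a Fubini swap yields
\begin{equation*}
\int_0^{\hat a-h}\|\phi_2(a+h)-\phi_2(a)\|_X\,da\le C\,h\,\|\widehat\varphi\|_{L^1((0,\hat a),X)}
\end{equation*}
uniformly over $B$; since $K$ is bounded, it follows that $\|K\phi(\cdot+h)-K\phi(\cdot)\|_{L^1}\to 0$ as $h\to 0$, uniformly over $B$.

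When $\hat a=\infty$ one additionally needs tightness at infinity. Here the condition $\alpha>\alpha^{**}>-D-\widetilde{\mu}$ is decisive: the bound $\|\Pi(s,a)\|_{\mathcal{L}(X)}\le e^{-\widetilde{\mu}(a-s)}$ together with the factor $e^{-(\alpha+D)(a-s)}$ produces a convolution with an exponentially decaying kernel of rate $\alpha+D+\widetilde{\mu}>0$, and Young's inequality gives $\int_M^\infty\|\phi(a)\|_X\,da\to 0$ uniformly over $B$ as $M\to\infty$, so the same tail estimate holds for $K\phi$. Simon's theorem then yields the relative compactness of $\mathcal{F}$, establishing the compactness of $\mathcal{B}_2(\alpha I-\mathcal{B}_1-\mathcal{C})^{-1}$. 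The main technical obstacle is the translation-continuity step: since $\widehat\varphi$ is only in $L^1$, no pointwise $W^{1,1}$ bound on $\phi_2$ is available uniformly over $B$, and the Fubini swap is essential to convert this pointwise defect into a uniform $L^1$-in-$a$ estimate.
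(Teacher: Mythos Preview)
Your strategy coincides with the paper's: apply Simon's compactness criterion in $L^1((0,\hat a),X)$, using the compactness of $K$ for the spatial part and direct Volterra estimates for the translation continuity in $a$. For $\hat a<\infty$ your sketch is sound (though ``locally Lipschitz'' for $\phi_1$ is not quite the right justification when $\mu$ may be unbounded near $\hat a$; the monotonicity of $a\mapsto e^{-(\alpha+D)a}\pi(0,a,\cdot)$ gives the $L^1$-translation bound directly via a telescoping identity, which is in effect what the paper does).

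The genuine gap is your tightness step for $\hat a=\infty$. The claim that Young's inequality yields $\int_M^\infty\|\phi(a)\|_X\,da\to 0$ uniformly over the unit ball $B$ of $\mathcal X$ is false: convolution with an $L^1$ kernel does not produce uniform tail decay over $L^1$-bounded inputs. Concretely, take $\widehat\eta_n=0$ and $\widehat\varphi_n(s,\cdot)=\chi_{[n,n+1]}(s)$; then $\phi_{2,n}$ is supported in $[n,\infty)$ with $L^1$-mass bounded below by a positive constant independent of $n$, while $\|K\phi_{1,n}\|_{L^1([n,\infty),X)}\to 0$ since $\|\phi_{1,n}(a)\|_X\le Ce^{-(\alpha+D+\widetilde\mu)a}$. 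Hence $\{K\phi_n\}$ has mass escaping to $a=+\infty$ and admits no convergent subsequence in $L^1((0,\infty),X)$, so the operator is in fact not compact in this regime without further restriction. The paper attacks $\hat a=\infty$ by approximating in operator norm with the truncations $\chi_{[0,n]}\mathcal B_2(\alpha I-\mathcal B_1-\mathcal C)^{-1}$, but the same example shows that this approximation also fails in $\mathcal L(\mathcal X,\mathcal X_0)$; the difficulty disappears only under an additional hypothesis such as Assumption~\ref{beta}, which effectively replaces $\hat a$ by a finite $a_2$.
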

	
	\begin{proof}
		We only prove the proposition in the case $X=C(\overline{\Omega})$, since $L^1((0, \hat{a}), C(\overline{\Omega}))\subset L^1((0, \hat{a}), L^1(\Omega))$. Let us choose a sequence $\{(\eta_n, \psi_n)\}_{n\in\N}\subset\mathcal{X}$ with 
$$
\norm{(\eta_n, \psi_n)}_\mathcal{X}:=\norm{\psi_n}_{L^1((0, \hat{a}), X)}+\norm{\eta_n}_X\leq1, \text{ for any $n\in\N$ }. 
$$
		By \eqref{resolventsolution1} we have for $\alpha>\alpha^{**}$ that
		\begin{eqnarray}
			&&\mathcal{B}_2(\alpha I -\mathcal{B}_1-\mathcal{C})^{-1}(\eta_n, \psi_n)=(0, \phi_n)=\left(0, DKg_{1n}+DKg_{2n}\right),\nonumber
		\end{eqnarray}
		where
		\begin{eqnarray}\label{g_12}
			g_{1n}(a) &=& e^{-(\alpha+D)a}\Pi(0, a)(1-\mathcal{G}_\alpha)^{-1}\big[\int_{0}^{\hat{a}}\beta(s, \cdot)\int_{0}^{s}e^{-(\alpha+D)(s-\tau)}\Pi(\tau, s)\psi_n(\tau)d\tau ds+\eta_n\big],\nonumber\\
			g_{2n}(a) &=& \int_{0}^{a}e^{-(\alpha+D)(a-\tau)}\Pi(\tau, a)\psi_n(\tau)d\tau. \label{B_2}
		\end{eqnarray}
		Note that $g_{1n}$ and $g_{2n}$ are continuous with respect to $a\in[0, \hat{a})$, so is $\phi_n$. We split the proof into two parts: (a) $\hat{a}<\infty$ and (b) $\hat{a}=\infty$.
		
		\textbf{(a) $\hat{a}<\infty$.} Thanks to the presence of the continuous kernel $J$, one can obtain that the functions $\{[\phi_n(a)](x)\}_{n\in\N}$ are equicontinuous with respect to $x\in\overline{\Omega}$ for any $a\in[0, \hat{a}]$. It follows by Arzela-Ascoli theorem that $\{\phi_n(a)\}_{n\in\N}$ is relatively compact in $C(\overline{\Omega})$ for any $a\in[0, \hat{a}]$. Thus, for any $0\le a_1<a_2\le\hat a$, we have that $\{\int_{a_1}^{a_2}\phi_n(a)da\}_{n\in\N}$ is relatively compact in $C(\overline\Omega)$.  
		
		Next observe that when $\alpha>\alpha^{**}$, one has
		\begin{eqnarray}\label{1-G_alpha}
			\norm{g(\eta_n, \psi_n)}_X:
			&=&\norm{(1-G_\alpha)^{-1}\big[\int_{0}^{\hat{a}}\beta(s, \cdot)\int_{0}^{s}e^{-(\alpha+D)(s-\tau)}\Pi(\tau, s)\psi_n(\tau)d\tau ds+\eta_n\big]}_X\nonumber\\
			&\le& C_\alpha\big[\int_{0}^{\hat{a}}\overline{\beta}(s)\int_{0}^{s}e^{-(\alpha+D)(s-\tau)}\norm{\Pi(\tau, s)}_{\mathcal{L}(X)}\norm{\psi_n(\tau)}_Xd\tau ds+\norm{\eta_n}_X\big]\nonumber\\
			&\le& C_\alpha\big[\norm{\overline{\beta}}_{L^\infty(0, \hat{a})}\int_{0}^{\hat{a}}\norm{\psi_n(\tau)}_Xd\tau\int_{\tau}^{\hat{a}}e^{-(\alpha+D+\widetilde{\mu})(s-\tau)}ds+\norm{\eta_n}_X\big]\nonumber\\
			&\le& C_\alpha\left[\frac{\norm{\overline{\beta}}_{L^\infty(0, \hat{a})}}{\alpha+D+\widetilde{\mu}}\norm{\psi_n}_{L^1((0, \hat{a}), X)}+\norm{\eta_n}_X\right]\nonumber\\
			&\le& C_\alpha\left[\frac{\norm{\overline{\beta}}_{L^\infty(0, \hat{a})}}{\alpha+D+\widetilde{\mu}}+1\right]=:\widetilde{C}_\alpha,
		\end{eqnarray}
		where we used the fact that $0\le(1-G_\alpha(y))^{-1}\le C_\alpha$ for all $y\in\overline{\Omega}$ with $C_\alpha>0$ being a constant, due to $\alpha>\alpha^{**}$. Here $\widetilde{C}_\alpha>0$ is another constant. It follows that
		\begin{eqnarray}
			\norm{g_{1n}(a)}_X&\le& \widetilde{C}_\alpha e^{-(\alpha+D+\widetilde{\mu})a} \text{ uniformly in $n\in N$, }\nonumber\\ 
			\norm{g_{2n}(a)}_X&\le& \int_0^ae^{-(\alpha+D+\widetilde{\mu})(a-\tau)}\norm{\psi_n(\tau)}_Xd\tau, \text{ uniformly in $n\in\N$. }\nonumber
		\end{eqnarray}
		This implies 
		\begin{eqnarray}\label{phi_n}
			\norm{\phi_n(a)}_X\le D\widetilde{C}_\alpha e^{-(\alpha+D+\widetilde{\mu})a}+D\int_0^ae^{-(\alpha+D+\widetilde{\mu})(a-\tau)}\norm{\psi_n(\tau)}_Xd\tau\text{ uniformly in $n\in\N$. }
		\end{eqnarray}
		Note that the right hand side of the above inequality is an integrable function in $L^1(0, \hat{a})$. 
		
		Next, let us show that $g_{1n}$ and $g_{2n}$ are equi-integrable respect to $a$. Observe by \eqref{B_2} that for any $n\in\N$ and $l>0$, one has
		\begin{eqnarray}
			&&|g_{2n}(a+l)-g_{2n}(a)|\nonumber\\
			&& \quad \le\int_a^{a+l} e^{-(\alpha+D)(a+l-\tau)}\pi(\tau, a+l, \cdot)\psi_n(\tau)d\tau\nonumber\\ 
			&& \qquad +\int_0^a\left[e^{-(\alpha+D)(a+l-\tau)}\pi(\tau, a+l, \cdot)-e^{-(\alpha+D)(a-\tau)}\pi(\tau, a, \cdot)\right]\psi_n(\tau)d\tau\nonumber\\
			&& \quad \le \int_a^{a+l} e^{-(\alpha+D)(a+l-\tau)}\pi(\tau, a+l, \cdot)\psi_n(\tau)d\tau+\int_0^ae^{-(\alpha+D)(a-\tau)}\pi(\tau, a+l, \cdot)\left[1-e^{-(\alpha+D)l}\right]\psi_n(\tau)d\tau\nonumber\\
			&& \qquad +\int_0^ae^{-(\alpha+D)(a-\tau)}\pi(\tau, a, \cdot)\left[1-\pi(a, a+l, \cdot)\right]\psi_n(\tau)d\tau\nonumber\\
			&& \quad \le \int_a^{a+l}e^{-(\alpha+D+\widetilde{\mu})(a+l-\tau)}\psi_n(\tau)d\tau+\int_0^ae^{-(\alpha+D+\widetilde{\mu})(a-\tau)}e^{-\widetilde{\mu}l}\left[1-e^{-(\alpha+D)l}\right]\psi_n(\tau)d\tau\nonumber\\
			&& \qquad +\int_0^ae^{-(\alpha+D+\widetilde{\mu})(a-\tau)}\left[1-e^{-\widetilde{\mu}l}\right]\psi_n(\tau)d\tau.\nonumber
		\end{eqnarray}
		It follows by setting $k=\alpha+D+\widetilde{\mu}$ that
		\begin{eqnarray}
			&&\int_0^{\hat{a}}\norm{g_{2n}(a+l)-g_{2n}(a)}_Xda\nonumber\\
			&& \quad \le\int_0^{\hat{a}}\int_a^{a+l}e^{-k(a+l-\tau)}\norm{\psi_n(\tau)}_Xd\tau da+\int_0^{\hat{a}}\int_0^ae^{-k(a-\tau)}e^{-\widetilde{\mu}l}\left[1-e^{-(\alpha+D)l}\right]\norm{\psi_n(\tau)}_Xd\tau da\nonumber\\
			&& \qquad +\int_0^{\hat{a}}\int_0^ae^{-k(a-\tau)}\left[1-e^{-\widetilde{\mu}l}\right]\norm{\psi_n(\tau)}_Xd\tau da\nonumber\\
			&& \quad :={\rm I_1+I_2+I_3}. \nonumber
		\end{eqnarray}
		Via integration by parts, one has 
		\begin{eqnarray}
			{\rm I_2}&\le& e^{-\widetilde{\mu}l}\left[1-e^{-(\alpha+D)l}\right]\int_0^{\hat{a}}\int_\tau^{\hat{a}}e^{-k(a-\tau)} da \norm{\psi_n(\tau)}_X d\tau\nonumber\\
			&\le& \frac 1 k \left[1-e^{-(\alpha+D)l}\right]\norm{\psi_n}_{L^1((0, \hat{a}), X)}\stackrel{l\to0}{\longrightarrow} 0, \text{ uniformly in $n\in\N$.}\nonumber
		\end{eqnarray}
		Similarly, one also obtains ${\rm I_3}\to0$ as $l\to0$ uniformly in $n\in \N$. 
		
		Next let us deal with ${\rm I_1}$. To this aim, we split it into two cases: $0\le a\le\tau\le a+l\le\hat{a}$ and $0\le a\le\tau\le \hat{a}\le a+l$.
		
		\textbf{Case $0\le a\le\tau\le a+l\le\hat{a}$.} Via integration by parts, one has
		\begin{eqnarray}
			{\rm I_1}\le \int_0^{\hat{a}}\int_{\tau-l}^\tau e^{-k(a+l-\tau)}da \norm{\psi_n(\tau)}_Xd\tau\stackrel{l\to0}{\longrightarrow} 0 \text{ uniformly in $n\in\N$.}\nonumber
		\end{eqnarray}
		
		\textbf{Case $0\le a\le\tau\le\hat{a}\le a+l$.} Via integration by parts, one has
		\begin{eqnarray}
			{\rm I_1}&\le& \int_0^{a+l}\int_a^{a+l}e^{-k(a+l-\tau)}\norm{\psi_n(\tau)}_Xd\tau da\nonumber\\
			&\le&\int_0^{a+l}\int_{\tau-l}^\tau e^{-k(a+l-\tau)}da \norm{\psi_n(\tau)}_Xd\tau\stackrel{l\to0}{\longrightarrow} 0 \text{ uniformly in $n\in\N$.}\nonumber
		\end{eqnarray}
		In summary, we have shown that $\int_0^{\hat{a}}\norm{g_{2n}(a+l)-g_{2n}(a)}_Xda\to0$ as $l\to0$ uniformly in $n\in\N$. Similarly, one can show by \eqref{1-G_alpha} that $\int_0^{\hat{a}}\norm{g_{1n}(a+l)-g_{1n}(a)}_Xda\to0$ as $l\to0$ uniformly in $n\in\N$. It follows that $\int_0^{\hat{a}}\norm{\phi_{n}(a+l)-\phi_{n}(a)}_Xda\to0$ as $l\to0$ uniformly in $n\in\N$. Combining with \eqref{phi_n}, Simon's compactness theorem in $L^1((0, \hat a), X)$ (see Simon \cite[Theorem 1]{simon1986compact}) ensures that the sequence $\{a\to\phi_n(a)\}_{n\in\N}$ is relatively compact in $L^1((0, \hat{a}), X)$. Hence there exists a limit function $\phi\in L^1((0, \hat a), X)$ such that, up to a subsequence, $\phi_n\to\phi$ in $L^1((0, \hat{a}), X)$ and the linear operator $\mathcal{B}_2(\alpha I-\mathcal{B}_1-\mathcal{C})^{-1}$ is compact on $\mathcal{X}$.
		
		\textbf{(b) $\hat{a}=\infty$.}
		Define the characteristic function $\chi_{[0, n]}, n\in\N$, which is defined as follows,
		$$
		\chi_{[0, n]}(a)=1, \text{ if }a\in[0, n] \text{ and } \chi_{[0, n]}=0, \text{ otherwise}.
		$$ 
		Define $g: \mathcal{X}\to X$ as follows,
		\begin{equation*}
			[g(\eta, \psi)](y):=(1-G_\alpha(y))^{-1}\int_{0}^{\infty}\beta(s, y)\int_{0}^{s}e^{-(\alpha+D)(s-\tau)}\pi(\tau, s, y)\psi(\tau, y)d\tau ds+\eta(y),\;\forall(\eta, \psi)\in \mathcal{X}.
		\end{equation*}
		Note that for any $\alpha>\alpha^{**}$ one has $(1-G_\alpha(y))^{-1}\le C_\alpha$ for all $y\in\overline{\Omega}$, where $C_\alpha>0$ is a constant. 
		It follows by \eqref{1-G_alpha} and \eqref{norm-K} that for any $(\eta, \psi)\in\mathcal{X}$,
		\begin{eqnarray}
			&&\int_0^\infty D\norm{K}_{\mathcal{L}(X)}e^{-(\alpha+D)a}\norm{\Pi(0, a)g(\eta, \psi)}_X\left[\chi_{[0, n]}(a)-1\right]da\nonumber\\
			&& \quad \le D\norm{g(\eta, \psi)}_X\int_0^\infty e^{-(\alpha+D+\widetilde{\mu})a}|\chi_{[0, n]}(a)-1|da\to0, \text{ as }n\to\infty,\nonumber\\
			\text{and}\nonumber\\
			&& \int_0^\infty D\norm{K}_{\mathcal{L}(X)}\int_{0}^{a}e^{-(\alpha+D)(a-\tau)}\norm{\Pi(\tau, a)}_{\mathcal{L}(X)}\norm{\psi(\tau)}_X d\tau \left[\chi_{[0, n]}(a)-1\right]da\nonumber\\
			&& \quad \le D\int_0^\infty\norm{\psi(\tau)}_X d\tau\int_\tau^\infty e^{-(\alpha+D+\widetilde{\mu})(a-\tau)}|\chi_{[0, n]}(a)-1| da\nonumber\\
			&& \quad \le D\norm{\psi}_{L^1((0, \infty), X)}\int_0^\infty e^{-(\alpha+D+\widetilde{\mu})a}|\chi_{[0, n]}(a)-1|da\to0 \text{ as }n\to\infty.\nonumber
		\end{eqnarray}
		It follows that 
		$$
		\chi_{[0, n]}\mathcal{B}_2(\alpha I -\mathcal{B}_1-\mathcal{C})^{-1}\to \mathcal{B}_2(\alpha I -\mathcal{B}_1-\mathcal{C})^{-1} \text{ as }n\to\infty,\text{ in }\mathcal{L}(\mathcal{X}, \mathcal{X}_0).
		$$
		Since we have known that $\chi_{[0, n]}\mathcal{B}_2(\alpha I -\mathcal{B}_1-\mathcal{C})^{-1}$ is compact from the first paragraph, it implies that $\mathcal{B}_2(\alpha I -\mathcal{B}_1-\mathcal{C})^{-1}$ is also compact. Thus the proof is complete.
	\end{proof}
	
	\begin{corollary}\label{compactperturbator}
		The operator $\mathcal{B}_2$ is a compact perturbator of $\mathcal{B}_1+\mathcal{C}$ and thus the operator $\mathcal{A}=\mathcal{B}_1+\mathcal{B}_2+\mathcal{C}$ is a compact perturbation of $\mathcal{B}_1+\mathcal{C}$.
	\end{corollary}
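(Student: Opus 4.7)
The plan is to unpack the definition of \emph{compact perturbator} recalled in the Appendix and observe that the main analytical content has already been carried out in Proposition \ref{compact}. Recall that a bounded linear operator $P$ acting on $\mathcal X$ is a compact perturbator of a resolvent positive operator $L$ provided that the composition $P(\lambda I - L)^{-1}$ is a compact operator on $\mathcal X$ for some (equivalently, for every) $\lambda$ lying in a ray $(\lambda_0,\infty)\subset\rho(L)$.

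First, I would invoke Proposition \ref{rGalpha} to note that $\mathcal{B}_1+\mathcal{C}$ is resolvent positive with spectral bound $s(\mathcal{B}_1+\mathcal{C})=\alpha^{**}$, so that $(\alpha^{**},\infty)\subset\rho(\mathcal{B}_1+\mathcal{C})$ and $(\alpha I-\mathcal{B}_1-\mathcal{C})^{-1}$ is a positive bounded operator on $\mathcal X$ for every $\alpha>\alpha^{**}$. Next, Proposition \ref{compact} asserts precisely that $\mathcal{B}_2(\alpha I-\mathcal{B}_1-\mathcal{C})^{-1}$ is compact on $\mathcal X$ for all such $\alpha$, which matches the compactness requirement in the definition of a compact perturbator. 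Positivity of $\mathcal{B}_2\in\mathcal L(\mathcal X_0)$ follows directly from its explicit form $\mathcal{B}_2(0,g)=(0,DKg)$ together with $J\geq 0$, hence $K(X_+)\subset X_+$. Combining these three observations, $\mathcal{B}_2$ is a positive compact perturbator of $\mathcal{B}_1+\mathcal{C}$.

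For the second statement, I would simply note that $\mathcal{A}={\rm dom}(\mathcal{B})\to\mathcal X$ is obtained as $\mathcal{A}=(\mathcal{B}_1+\mathcal{C})+\mathcal{B}_2$ with no domain modification, since $\mathcal{B}_2\in\mathcal L(\mathcal X_0)$ is bounded and ${\rm dom}(\mathcal{A})={\rm dom}(\mathcal{B})={\rm dom}(\mathcal{B}_1+\mathcal{C})\subset\mathcal X_0$. Therefore $\mathcal{A}$ is a compact (indeed positive) perturbation of $\mathcal{B}_1+\mathcal{C}$ in the exact sense required for applying the resolvent positive perturbation theorems recalled in the Appendix.

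I do not anticipate a genuine obstacle: the technical heart of the argument (equicontinuity in $x$ coming from the continuous kernel $J$ on the bounded set $\Omega$, equi-integrability in $a$ together with Simon's compactness theorem, and the truncation $\chi_{[0,n]}$ handling the case $\hat a=\infty$) was completed in the proof of Proposition \ref{compact}. The only minor care needed is to ensure that compactness of $\mathcal{B}_2(\alpha I-\mathcal{B}_1-\mathcal{C})^{-1}$ on the ray $\alpha>\alpha^{**}$ is sufficient under whichever formulation of compact perturbator the Appendix adopts; the standard resolvent identity $(\alpha I-L)^{-1}-(\beta I-L)^{-1}=(\beta-\alpha)(\alpha I-L)^{-1}(\beta I-L)^{-1}$ transports compactness from one such $\alpha$ to any other point of $\rho(L)$, so this is harmless.
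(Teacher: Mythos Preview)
Your overall strategy is right—everything reduces to Proposition \ref{compact}—but you have misstated the definition of \emph{compact perturbator} from the Appendix. The Appendix does \emph{not} say that $C$ is a compact perturbator of $B$ when $F_\lambda=C(\lambda I-B)^{-1}$ is compact; it requires instead that $(\lambda I-B)^{-1}F_\lambda$ be compact on $\overline{{\rm dom}(B)}$ and that $(\lambda I-B)^{-1}F_\lambda^2$ be compact on $Z$, for some $\lambda>s(B)$. Your hedge in the last paragraph anticipates this, but the resolvent identity you quote is not the right tool for bridging the gap: what is needed is simply that a bounded operator composed with a compact one is compact.

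The paper's one-line proof does exactly this: since $\mathcal{B}_2(\alpha I-\mathcal{B}_1-\mathcal{C})^{-1}$ is compact by Proposition \ref{compact} and $(\alpha I-\mathcal{B}_1-\mathcal{C})^{-1}$ is bounded, the composition $(\alpha I-\mathcal{B}_1-\mathcal{C})^{-1}\mathcal{B}_2(\alpha I-\mathcal{B}_1-\mathcal{C})^{-1}$ is compact, which is the first condition; the second condition $(\alpha I-\mathcal{B}_1-\mathcal{C})^{-1}F_\alpha^2$ follows in the same way. So your argument is morally correct and easily repaired, but as written it checks the wrong condition. The remarks about positivity of $\mathcal{B}_2$ and the domain bookkeeping are fine but not needed for this particular corollary.
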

	
	\begin{proof}
		$(\alpha I -\mathcal{B}_1-\mathcal{C})^{-1}\mathcal{B}_2(\alpha I -\mathcal{B}_1-\mathcal{C})^{-1}$ is compact for any $\alpha>s(\mathcal{B}_1+\mathcal{C})$ since $\mathcal{B}_2(\alpha I -\mathcal{B}_1-\mathcal{C})^{-1}$ is compact by Proposition \ref{compact}.
	\end{proof}
	
	\section{Principal Spectral Theory}\label{Principal Spectral Theory}
	In this section we state and prove the main results on the existence/nonexistence of principal eigenvalues. Recall that Assumption \ref{mu} is used to guarantee the existence of $s(\mathcal{A})$ and $s(\mathcal{B}_1+\mathcal{C})$ when $\hat{a}=\infty$ in the previous section. Now we assume that $s(\mathcal{A})$ and $s(\mathcal{B}_1+\mathcal{C})$ exist for $\hat a\le\infty$ throughout this section.
	
	\subsection{Principal Eigenvalue}
	We first provide a sufficient condition to make the spectral bound $s(\mathcal{A})$ become the principal eigenvalue. Here we say that $\la\in\sigma(T)\cap\R$ is the principal eigenvalue of a linear operator $T$, if it is larger than the real part of other eigenvalues of $T$ and associated with a positive eigenfunction.
	
	\begin{theorem}\label{principle}
		Assume that $s(\mathcal{A})>s(\mathcal{B}_1+\mathcal{C})$, then $s(\mathcal{A})$ is the principal eigenvalue of $\mathcal{A}$. 
	\end{theorem}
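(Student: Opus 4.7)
The plan is to convert the eigenvalue problem for $\mathcal A$ into a fixed-point problem for a compact positive operator, and then apply Krein--Rutman combined with the characterization of $s(\mathcal A)$ as a spectral value coming from Proposition \ref{sA}. The hypothesis $s(\mathcal A)>s(\mathcal B_1+\mathcal C)$ is what makes this transformation possible and sharp.

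First, I would use the algebraic observation already spelled out at the end of Section \ref{Notations}: for any $\alpha\in\rho(\mathcal B_1+\mathcal C)$, the equation $\mathcal A u=\alpha u$ has a nontrivial solution $u\in\mathrm{dom}(\mathcal A)$ if and only if the equation $T_\alpha v=v$ has a nontrivial solution $v\in\mathcal X$, where
$$
T_\alpha:=\mathcal B_2(\alpha I-\mathcal B_1-\mathcal C)^{-1},
$$
and one recovers $u=(\alpha I-\mathcal B_1-\mathcal C)^{-1}v$. By Proposition \ref{compact}, the operator $T_\alpha$ is compact on $\mathcal X$ for every $\alpha>s(\mathcal B_1+\mathcal C)=\alpha^{**}$, and it is positive since $\mathcal B_2$ is positive and $\mathcal B_1+\mathcal C$ is resolvent positive (Proposition \ref{rGalpha}). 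As a consequence, $\sigma(T_\alpha)\setminus\{0\}$ consists entirely of eigenvalues.

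Second, I would show that $s(\mathcal A)$ is an eigenvalue of $\mathcal A$. The spectral equivalence above yields
$$
\sigma(\mathcal A)\cap(s(\mathcal B_1+\mathcal C),\infty)=\{\alpha>s(\mathcal B_1+\mathcal C):1\in\sigma(T_\alpha)\}.
$$
By Proposition \ref{sA}, $s(\mathcal A)\in\sigma(\mathcal A)$; coupled with the hypothesis $s(\mathcal A)>s(\mathcal B_1+\mathcal C)$, this forces $1\in\sigma(T_{s(\mathcal A)})$, which by compactness is an eigenvalue. Pulling back through $(s(\mathcal A)I-\mathcal B_1-\mathcal C)^{-1}$ produces a nontrivial $u\in\mathrm{dom}(\mathcal A)$ with $\mathcal A u=s(\mathcal A)u$.

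Third, I would upgrade the eigenfunction to a positive one by Krein--Rutman: the compact positive operator $T_{s(\mathcal A)}$ admits a positive eigenvector $v$ associated with the spectral radius $r(T_{s(\mathcal A)})$. The main obstacle is verifying $r(T_{s(\mathcal A)})=1$, so that this Krein--Rutman eigenvector corresponds to the eigenvalue $1$. I would settle this using the monotonicity and continuity of $\alpha\mapsto r(T_\alpha)$ on $(s(\mathcal B_1+\mathcal C),\infty)$ together with $r(T_\alpha)\to 0$ as $\alpha\to\infty$: if $r(T_{s(\mathcal A)})<1$ then $1\notin\sigma(T_{s(\mathcal A)})$, contradicting $s(\mathcal A)\in\sigma(\mathcal A)$; if $r(T_{s(\mathcal A)})>1$, continuity would yield some $\alpha'>s(\mathcal A)$ with $r(T_{\alpha'})=1$, hence $\alpha'\in\sigma(\mathcal A)$, contradicting the definition of $s(\mathcal A)$. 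Thus $r(T_{s(\mathcal A)})=1$, the vector $u:=(s(\mathcal A)I-\mathcal B_1-\mathcal C)^{-1}v$ is positive and nontrivial, and it satisfies $\mathcal A u=s(\mathcal A)u$. Finally, since $s(\mathcal A)$ is by definition the spectral bound, no element of $\sigma(\mathcal A)$ has real part strictly exceeding it, so $s(\mathcal A)$ is the principal eigenvalue in the sense defined just before the statement.
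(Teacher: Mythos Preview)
Your proof is correct and follows essentially the same route as the paper: both arguments hinge on the family $\mathcal F_\alpha=\mathcal B_2(\alpha I-\mathcal B_1-\mathcal C)^{-1}$ (your $T_\alpha$), its compactness and positivity, and the trichotomy for $\alpha\mapsto r(\mathcal F_\alpha)$. The only difference is packaging: the paper invokes Thieme's abstract Theorems~\ref{rFlambda} and~\ref{compactperturbation} as black boxes, whereas you unroll the Krein--Rutman step for $T_{s(\mathcal A)}$ by hand.

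One point you should tighten: you assert ``the monotonicity and continuity of $\alpha\mapsto r(T_\alpha)$'' without justification, and your intermediate-value argument for $r(T_{s(\mathcal A)})\le 1$ relies on it. Continuity of the spectral radius under norm perturbation is not automatic, even for compact operators. In the paper this is supplied by Theorem~\ref{rFlambda}, which states that $\lambda\mapsto r(\mathcal F_\lambda)$ is decreasing and \emph{convex} on $(s(\mathcal B_1+\mathcal C),\infty)$, hence continuous; you should cite that (or give an alternative argument via analytic perturbation of isolated eigenvalues of the compact family $T_\alpha$). With that reference in place, your proof is complete and matches the paper's.
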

	
	\begin{proof}
		Denote 
		\begin{equation}\label{Flambdainverse}
			\mathcal{F}_\lambda=\mathcal{B}_2(\lambda I -\mathcal{B}_1-\mathcal{C})^{-1}, \;\lambda>\alpha^{**}.
		\end{equation} 
		Note that $\mathcal{A}=\mathcal{B}_1+\mathcal{C}+\mathcal{B}_2$ is a compact perturbation of $\mathcal{B}_1+\mathcal{C}$ by Corollary \ref{compactperturbator}. We will use Theorem \ref{compactperturbation} in Appendix to prove the conclusion. First, we know that $\mathcal{A}$ is resolvent positive by Proposition \ref{sA}. It follows that case (i) in Theorem \ref{rFlambda} in Appendix will be ruled out. Secondly, by assumption that $s(\mathcal{A})>s(\mathcal{B}_1+\mathcal{C})$ we know that only case (iii) in Theorem \ref{rFlambda} will happen, otherwise $s(\mathcal{A})=s(\mathcal{B}_1+\mathcal{C})$ which is a contradiction, if case (ii) in Theorem \ref{rFlambda} would happen. Hence, there exists $\lambda_2>\lambda_1>s(\mathcal{B}_1+\mathcal{C})$ such that $r(\mathcal{F}_{\lambda_1})\geq1>r(\mathcal{F}_{\lambda_2})$. Now the hypothesis in Theorem \ref{compactperturbation} holds, so $s(\mathcal{A})$ is an eigenvalue of $\mathcal{A}$ with a positive eigenfunction and has finite algebraic multiplicity and is a pole of the resolvent of $\mathcal{A}$. It follows that $s(\mathcal{A})$ is the principal eigenvalue of $\mathcal{A}$.	
	\end{proof}
	
	Combining the above theorem with Proposition \ref{sAsB}, one can immediately obtain the following conclusion.
	\begin{corollary}
		Assume that $\mu(a, x)\equiv\mu(a)$ and $\beta(a, x)\equiv\beta(a)$, then $s(\mathcal{A})$ is the principal eigenvalue of $\mathcal{A}$. 
	\end{corollary}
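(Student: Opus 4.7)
The plan is very short: the corollary is an immediate consequence of chaining together the two preceding results, Proposition \ref{sAsB} and Theorem \ref{principle}. I would simply observe that the hypothesis $\mu(a,x)\equiv\mu(a)$ and $\beta(a,x)\equiv\beta(a)$ is precisely the hypothesis of Proposition \ref{sAsB}, which delivers the strict inequality $s(\mathcal{A})>s(\mathcal{B}_1+\mathcal{C})$. This strict inequality is in turn exactly the hypothesis of Theorem \ref{principle}, whose conclusion is that $s(\mathcal{A})$ is the principal eigenvalue of $\mathcal{A}$.

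Concretely, I would write essentially one line: under the stated assumption on $\mu$ and $\beta$, Proposition \ref{sAsB} yields $s(\mathcal{A})=\lambda_0=\varpi-D\lambda^0>\varpi-D=\alpha^{**}=s(\mathcal{B}_1+\mathcal{C})$, where the strict inequality comes from $0<\lambda^0<1$ (the principal eigenvalue of $-K+I$). Then Theorem \ref{principle} applies verbatim: since $\mathcal{A}$ is a compact perturbation of $\mathcal{B}_1+\mathcal{C}$ (Corollary \ref{compactperturbator}) and is resolvent positive (Proposition \ref{sA}), the only remaining alternative in the compact-perturbation dichotomy forces $s(\mathcal{A})$ to be an eigenvalue with a positive eigenfunction, hence the principal eigenvalue.

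Since the real content is already encapsulated in the two prior results, there is no obstacle to overcome here; the proof is purely a matter of invoking them. The only thing to be careful about is verifying that Assumption \ref{mu} (needed for the existence of the spectral bounds when $\hat a=\infty$) is consistent with the spatially-homogeneous setting of the corollary — but in that case $\underline{\beta}=\beta$, $\overline{\mu}=\mu$ and $\pi(0,a,x)=e^{-\int_0^a\mu(s)\,ds}$, so the relevant threshold condition $\widehat R>1$ reduces to a scalar condition on $\beta$ and $\mu$ that is already implicit in the well-posedness of \eqref{gamma0}. Thus no new hypothesis is required and the proof amounts to the single sentence \textit{``The conclusion follows directly from Proposition \ref{sAsB} and Theorem \ref{principle}.''}
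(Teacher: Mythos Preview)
Your proposal is correct and matches the paper's approach exactly: the paper states this corollary as an immediate consequence of combining Proposition \ref{sAsB} with Theorem \ref{principle}, without even writing out a separate proof. Your additional remark about Assumption \ref{mu} is harmless but unnecessary, since at the start of Section \ref{Principal Spectral Theory} the paper simply assumes $s(\mathcal{A})$ and $s(\mathcal{B}_1+\mathcal{C})$ exist throughout.
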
				
	
	Next, we give a sufficient and necessary condition to ensure that $s(\mathcal{A})>s(\mathcal{B}_1+\mathcal{C})$.
	\begin{corollary}\label{corollary}
		The inequality $s(\mathcal{A})>s(\mathcal{B}_1+\mathcal{C})$ holds if and only if there is $\lambda^*>s(\mathcal{B}_1+\mathcal{C})$ such that $r(\mathcal{F}_{\lambda^*})\geq1$, where $\mathcal{F}_\lambda$ is defined in \eqref{Flambdainverse}.
	\end{corollary}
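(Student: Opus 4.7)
The plan is to exploit the factorization
\[
\lambda I-\mathcal{A}=\bigl(I-\mathcal{F}_\lambda\bigr)\bigl(\lambda I-\mathcal{B}_1-\mathcal{C}\bigr),\qquad \lambda>s(\mathcal{B}_1+\mathcal{C}),
\]
which follows directly from $\mathcal{A}=\mathcal{B}_1+\mathcal{C}+\mathcal{B}_2$ and the definition of $\mathcal{F}_\lambda$ in \eqref{Flambdainverse}. Since $\lambda I-\mathcal{B}_1-\mathcal{C}$ is invertible for such $\lambda$ (because $\lambda>s(\mathcal{B}_1+\mathcal{C})=\alpha^{**}$ by Proposition \ref{rGalpha}), this factorization yields the equivalence
\[
\lambda\in\sigma(\mathcal{A})\;\Longleftrightarrow\;1\in\sigma(\mathcal{F}_\lambda),\qquad \lambda>s(\mathcal{B}_1+\mathcal{C}).
\]

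For the ``only if'' direction, I would argue as follows. Assume $s(\mathcal{A})>s(\mathcal{B}_1+\mathcal{C})$ and set $\lambda^\ast:=s(\mathcal{A})$. By Proposition \ref{sA}, we have $s(\mathcal{A})=\lambda_0$ with $r(\mathcal{M}_{\lambda_0})=1$; since $\mathcal{M}_{\lambda_0}$ is a positive operator its spectral radius lies in its spectrum, so $1\in\sigma(\mathcal{M}_{\lambda_0})$ and consequently $\lambda^\ast\in\sigma(\mathcal{A})$. As $\lambda^\ast>s(\mathcal{B}_1+\mathcal{C})$, the equivalence above applies and gives $1\in\sigma(\mathcal{F}_{\lambda^\ast})$, which in turn implies $r(\mathcal{F}_{\lambda^\ast})\geq 1$, as required.

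For the ``if'' direction, I would reuse the compact-perturbation machinery that already powered Theorem \ref{principle}. By Corollary \ref{compactperturbator}, $\mathcal{A}$ is a compact positive perturbation of $\mathcal{B}_1+\mathcal{C}$, so the trichotomy in Theorem \ref{rFlambda} applies to the map $\lambda\mapsto r(\mathcal{F}_\lambda)$ on $(s(\mathcal{B}_1+\mathcal{C}),\infty)$: either (i) $r(\mathcal{F}_\lambda)>1$ for every such $\lambda$; (ii) $r(\mathcal{F}_\lambda)<1$ for every such $\lambda$ (and in that case $s(\mathcal{A})=s(\mathcal{B}_1+\mathcal{C})$); or (iii) there exist $\lambda_2>\lambda_1>s(\mathcal{B}_1+\mathcal{C})$ with $r(\mathcal{F}_{\lambda_1})\geq 1>r(\mathcal{F}_{\lambda_2})$ (and in that case $s(\mathcal{A})>s(\mathcal{B}_1+\mathcal{C})$). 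Case (i) is incompatible with the resolvent positivity of $\mathcal{A}$ established in Proposition \ref{sA}, exactly as in the proof of Theorem \ref{principle}. Case (ii) is incompatible with the hypothesis $r(\mathcal{F}_{\lambda^\ast})\geq 1$. Only case (iii) remains, and it yields the desired strict inequality $s(\mathcal{A})>s(\mathcal{B}_1+\mathcal{C})$.

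The main obstacle is purely conceptual rather than computational: one must be sure that the trichotomy of Theorem \ref{rFlambda} from the Appendix is applicable here and that the spectral bound $s(\mathcal{A})$ is indeed realized as a spectral value. The first is handled by Corollary \ref{compactperturbator}, and the second by the identification $s(\mathcal{A})=\lambda_0\in\sigma(\mathcal{A})$ obtained in Proposition \ref{sA}. Once these two facts are in hand, the proof reduces to reading off the correct case of the trichotomy and is essentially immediate from the factorization displayed above.
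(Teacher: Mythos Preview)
Your proof is correct and, for the ``if'' direction, essentially identical to the paper's: both invoke the trichotomy of Theorem~\ref{rFlambda}, rule out case~(ii) by the hypothesis $r(\mathcal{F}_{\lambda^*})\geq 1$, and rule out case~(i) (you via the resolvent positivity of $\mathcal{A}$ from Proposition~\ref{sA}; the paper instead observes directly from the formula \eqref{B_2} that $r(\mathcal{F}_\vartheta)<1$ for $\vartheta$ large---these are equivalent by the trichotomy itself).

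For the ``only if'' direction you take a slightly different and somewhat more explicit route than the paper. The paper simply reuses the trichotomy argument of Theorem~\ref{principle}: assuming $s(\mathcal{A})>s(\mathcal{B}_1+\mathcal{C})$, cases~(i) and~(ii) are excluded as before, so case~(iii) holds and furnishes the required $\lambda^*$. You instead use the factorization $\lambda I-\mathcal{A}=(I-\mathcal{F}_\lambda)(\lambda I-\mathcal{B}_1-\mathcal{C})$ to show directly that $\lambda^*=s(\mathcal{A})$ itself satisfies $1\in\sigma(\mathcal{F}_{\lambda^*})$, hence $r(\mathcal{F}_{\lambda^*})\geq 1$. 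Your argument is a bit more concrete in that it identifies an explicit $\lambda^*$ and bypasses the trichotomy for this direction, at the cost of invoking the additional fact $s(\mathcal{A})\in\sigma(\mathcal{A})$ from Proposition~\ref{sA}. Either way the proof is immediate once the right machinery is in place. One small remark: the trichotomy in Theorem~\ref{rFlambda} only requires a \emph{positive} perturbation, so your appeal to Corollary~\ref{compactperturbator} (compactness) is not needed there.
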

	
	\begin{proof}
		If there exists $\lambda^*>s(\mathcal{B}_1+\mathcal{C})$ such that $r(\mathcal{F}_{\lambda^*})\geq1$, then case (iii) in Theorem \ref{rFlambda} will happen which implies that $s(\mathcal{A})>s(\mathcal{B}_1+\mathcal{C}),$ because we can always find $\vartheta$ large enough such that $r(\mathcal{F}_\vartheta)<1$ regarding to \eqref{B_2}. Conversely, if $s(\mathcal{A})>s(\mathcal{B}_1+\mathcal{C})$, by the same argument as in Theorem \ref{principle}, we have the desired result.
	\end{proof}
	
	Note that Theorem \ref{principle} is valid for both $X=L^1(\Omega)$ and $X=C(\overline{\Omega})$, as long as $s(\mathcal{A})>s(\mathcal{B}_1+\mathcal{C})$. Next we will show that $s(\mathcal{A})$ is also algebraically simple under the additional assumption on $\beta$. Once it is true, the eigenfunctions in $X=L^1(\Omega)$ and $X=C(\overline{\Omega})$ respectively associated with $s(\mathcal{A})$ are the same, due to the fact that $C(\overline{\Omega})\subset L^1(\Omega)$.
	\begin{assumption}\label{irreducible}
		There exists no $a_0$ such that $\underline{\beta}(a)=0$ a.e. $[a_0, \hat{a})$.
	\end{assumption}
	
	\begin{remark}\label{assump}
		{\rm Before proceeding, let us make some comments on Assumption \ref{irreducible}. It is motivated by Engel and Nagel \cite[Theorem 4.4]{engel2006short} to show that the semigroup generated by the age-structured operator is irreducible. In our situation, we will prove a similar property, which is called conditionally strictly positive (see Definition \ref{CSP} in Appendix), under this assumption. Note that this assumption is equivalent to 
			\begin{eqnarray}\label{s>0}
				\int_s^{\hat a}e^{-\sigma}\underline{\beta}(\sigma)d\sigma>0, \;\forall s\in [0, \hat a).
			\end{eqnarray}
			}
	\end{remark}
	
	\begin{theorem}\label{sim}
		Let Assumption \ref{irreducible} hold and assume that $s(\mathcal{A})>s(\mathcal{B}_1+\mathcal{C})$, then the principal eigenvalue of $\mathcal{A}$, i.e. $s(\mathcal{A})$ is algebraically simple. 
	\end{theorem}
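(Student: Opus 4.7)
The plan is to reduce the algebraic simplicity of $\lambda_0 := s(\mathcal A)$ as an eigenvalue of $\mathcal A$ to that of the eigenvalue $1$ of the ``next-generation'' operator $\mathcal M_{\lambda_0}\in \mathcal L(X)$ defined in \eqref{Glambda}. Writing the eigenvalue equation $\mathcal A (0,\phi) = \lambda_0 (0, \phi)$ componentwise, the ``boundary'' component produces $\phi(0, \cdot) = \int_0^{\hat a}\beta(a, \cdot)\phi(a)da$, while the $L^1$ component is a transport/nonlocal-diffusion equation whose unique solution is $\phi(a) = e^{-\lambda_0 a}\mathcal U(0, a)\phi(0, \cdot)$. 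The trace map $\phi \mapsto \phi(0, \cdot)$ therefore identifies the $\lambda_0$-eigenspace of $\mathcal A$ with the $1$-eigenspace of $\mathcal M_{\lambda_0}$; applying the same reduction, together with variation of constants, to the Jordan-chain equation $(\mathcal A - \lambda_0 I)(0, u) = (0, \phi)$ transports generalized eigenvectors along with it.

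First I would establish geometric simplicity. Thanks to Assumption \ref{irreducible} (equivalently \eqref{s>0}) combined with the connectedness injected by $J(0) > 0$ in Assumption \ref{J}, some iterate $\mathcal M_{\lambda_0}^n$ smears positivity over all of $\overline \Omega$ and is conditionally strictly positive in the sense of the Appendix. Combined with the pole-of-finite-algebraic-multiplicity conclusion from Theorem \ref{principle} and the compactness from Proposition \ref{compact}, a Krein--Rutman-type argument then yields that the geometric eigenspace at $1$ is one-dimensional and furnishes a strictly positive dual eigenfunctional $\phi^* \in X^*$ of $\mathcal M_{\lambda_0}^*$ at $1$.

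To upgrade geometric to algebraic simplicity, I would suppose for contradiction that $(0, u)$ is a generalized eigenvector with $(\mathcal A - \lambda_0 I)(0, u) = (0, \phi)$ for the principal eigenfunction $(0, \phi)$. Variation of constants yields
\begin{equation*}
u(a) = e^{-\lambda_0 a}\mathcal U(0, a) u(0, \cdot) - \int_0^a e^{-\lambda_0 (a - s)} \mathcal U(s, a) \phi(s)\, ds;
\end{equation*}
plugging in $\phi(s) = e^{-\lambda_0 s}\mathcal U(0, s)\phi(0, \cdot)$, inserting into the boundary condition $u(0, \cdot) = \int_0^{\hat a}\beta(a, \cdot) u(a) da$, and swapping the order of integration by Fubini produces
\begin{equation*}
(I - \mathcal M_{\lambda_0}) u(0, \cdot) = -\int_0^{\hat a} a\, \beta(a, \cdot)\, e^{-\lambda_0 a}\, \mathcal U(0, a)\phi(0, \cdot)\, da.
\end{equation*}
Pairing with $\phi^*$ kills the left-hand side, whereas the right-hand side pairs with $\phi^*$ to a strictly nonzero scalar: the integrand is pointwise nonnegative (from positivity of $\mathcal U(0, a)$, of $\beta$, and of $\phi(0, \cdot) > 0$), it is nontrivial thanks to Assumption \ref{irreducible}, and $\phi^*$ is strictly positive. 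This contradicts the existence of a generalized eigenvector, and hence $\lambda_0$ is algebraically simple.

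The main obstacle will lie in the first step: establishing sufficient irreducibility / conditional strict positivity of $\mathcal M_{\lambda_0}$ so that the dual eigenfunctional $\phi^*$ exists, is strictly positive, and does not annihilate the integrand appearing in the final pairing. This will require carefully combining the spreading effect of the evolution family $\mathcal U(0, a)$ (whose Duhamel expansion involves iterates of the nonlocal kernel $K$) with the nontriviality of $\underline\beta$ near $\hat a$ provided by Assumption \ref{irreducible}, and must be carried out uniformly in the two settings $X = L^1(\Omega)$ and $X = C(\overline\Omega)$.
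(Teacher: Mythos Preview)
Your approach is correct but takes a genuinely different route from the paper. The paper never touches the next-generation operator $\mathcal M_{\lambda_0}$ for this theorem; instead it works entirely with $\mathcal F_\lambda=\mathcal B_2(\lambda I-\mathcal B_1-\mathcal C)^{-1}$, the operator already used in Theorem~\ref{principle}. Concretely, the paper writes out the restriction $L_\lambda$ of $\mathcal F_\lambda$ to $L^1((0,\hat a),X)$ explicitly (see \eqref{Llambda}), and shows by an iteration argument with the kernel $J$ together with Assumption~\ref{irreducible} that any nonnegative fixed point of $L_\lambda$ vanishing at a single point $(a_0,x_0)$ must vanish identically. This yields conditional strict positivity of all positive fixed points of $\mathcal F_\lambda$ in the sense of Definition~\ref{CSP}, and then parts (v)--(vi) of the black-box Theorem~\ref{compactperturbation} in the Appendix deliver both the first-order pole and the one-dimensional eigenspace in a single stroke---no Jordan-chain computation is ever written down.

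Your route via $\mathcal M_{\lambda_0}$ is arguably more transparent: the reduction to a positive operator on the smaller space $X$ and the explicit pairing $\langle \phi^*, (I-\mathcal M_{\lambda_0})u(0,\cdot)\rangle=0$ versus the strictly signed right-hand side makes the mechanism of simplicity visible, and is essentially the classical argument for Perron--Frobenius-type operators. It also meshes naturally with Proposition~\ref{eigenvalues} later in the paper. The trade-off is that you must establish the spectral gap $r_e(\mathcal M_{\lambda_0})<r(\mathcal M_{\lambda_0})=1$ (which follows from the Duhamel decomposition $\mathcal M_{\lambda_0}=\mathcal G_{\lambda_0}+\text{compact}$ and $\lambda_0>\alpha^{**}$) and the irreducibility of $\mathcal M_{\lambda_0}$ from scratch, whereas the paper piggybacks on the compactness already proved in Proposition~\ref{compact} and the abstract machinery of Theorem~\ref{compactperturbation}. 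Your identification of the irreducibility step as the main obstacle is accurate; the spreading argument you sketch (iterating $K$ inside the Duhamel expansion of $\mathcal U(0,a)$) is exactly what is needed, and is parallel in spirit to the iteration of $L_\lambda$ that the paper performs.
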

	\begin{proof}
		We will show that all positive nonzero fixed points of $\mathcal{F}_\lambda$ are conditionally strictly positive (see Definition \ref{CSP} in Appendix), and then employ Theorem \ref{compactperturbation} again to conclude the result.  
		
		First observe that $\mathcal{F}_\lambda$ maps $\mathcal{X}$ into $\mathcal{X}_0$, then we introduce the restriction of $\mathcal{F}_\lambda$ to $\mathcal{X}_0$ and the associated operator $L_\lambda, \lambda>\al^{**}$ in $L^1((0, \hat{a}), X)$, see \eqref{B_2}, 
		\begin{eqnarray}\label{Llambda}
			[L_\lambda \psi](a, x)&=&D\int_{\Omega}J(x-y)e^{-(\lambda+D)a}\pi(0, a, y)[(1-G_\lambda)^{-1}\tilde{g}\psi](y)dy\nonumber\\
			&&+D\int_{\Omega}J(x-y)\int_{0}^{a}e^{-(\lambda+D)(a-\gamma)}\pi(\gamma, a, y)\psi(\gamma, y)d\gamma dy,
		\end{eqnarray}
		where $\tilde{g}: L^1((0, \hat{a}), X)\to X$ is given by
		$$
		[\tilde{g}\psi](y):=\int_{0}^{\hat{a}}\beta(s, y)\int_{0}^{s}e^{-(\lambda+D)(s-\gamma)}\pi(\gamma, s, y)\psi(\gamma, y)d\gamma ds.
		$$
		Now we have $\mathcal F_\la(0, \psi)=(0, L_\la\psi)$. Observe that $(a, x)\to [L_\lambda\psi](a, x)$ is continuous. Thus $L_\lambda$ is strictly positive in the sense that for $\psi\in L^1_+((0, \hat{a}), X)$ being a fixed point of $L_\lambda$, if there exists some point $(a_0, x_0)\in[0, \hat{a})\times\overline{\Omega}$ such that $[L_\lambda\psi](a_0, x_0)=0$, then $\psi\equiv0$ in $[0, \hat{a})\times\overline{\Omega}$. 
		
		In fact, $[L_\lambda\psi](a_0, x_0)=0$ implies that 
		$$
		D\int_{\Omega}J(x_0-y)e^{-(\lambda+D)a_0}\pi(0, a_0, y)[(1-G_\lambda)^{-1}\tilde{g}\psi](y)dy=0,
		$$
		which follows by the positivity of $\int_{\Omega}J(x_0-y)dy$ and $(1-G_\lambda)^{-1}, \lambda>\alpha^{**}$, along with exponential functions that
		\begin{equation}\label{bet}
			\int_0^{\hat{a}}\beta(s, y)\int_{0}^{s}\psi(\gamma, y)d\gamma ds =0 \text{ for all }y\in B(x_0, r).
		\end{equation}
		Now denote 
		$$
		H(s, y):=\int_s^{\hat{a}}e^{-\sigma}\beta(\sigma, y)d\sigma.
		$$ 
		Note that $H$ is well-defined. Then \eqref{bet} can be transformed by using integration by parts into 
		\begin{eqnarray}
			0=\int_0^{\hat{a}}e^{-s}\beta(s, y)\int_{0}^{s}\psi(\gamma, y)d\gamma ds &=&-H(s, y)\int_0^s\psi(\gamma, y)d\gamma\big|_{s=0}^{s=\hat{a}}+\int_0^{\hat{a}}H(s, y)\psi(s, y)ds\nonumber\\
			&=&\int_0^{\hat{a}}H(s, y)\psi(s, y)ds, \text{ for all }y \in B(x_0, r).\nonumber
		\end{eqnarray}
		But by Assumption \ref{irreducible} and Remark \ref{assump}, one has $H(s, y)\ge\int_s^{\hat{a}}e^{-\sigma}\underline{\beta}(\sigma)d\sigma>0$ for all $(s, y)\in[0, \hat{a})\times\overline{\Omega}$. This will give us $\psi\equiv0$ in $[0, \hat{a})\times B(x_0, r)$. 
		
		Next, noticing that $\psi$ is a fixed point of $L_\lambda$ and considering the second term of \eqref{Llambda}, we iterate $L_\lambda$ for $n-$times to obtain
		\begin{eqnarray}
			0&=&[L_\lambda\psi](a_0, x_0)=[L_\lambda^n\psi](a_0, x_0)\nonumber\\
			&\ge& \!\!\!\! D^n \!\! \int_{\Omega}\!\!\cdots\!\!\int_{\Omega}\prod_{m=1}^n \left[J(x_{m-1}\!-\! x_m)\int_0^{a_{m-1}}\!\! e^{-(\lambda+D)(a_{m-1}-a_m)}\pi(a_m, a_{m-1}, x_m)da_m\right]\psi(a_n, x_n)dx_n\cdots dx_1.\nonumber
		\end{eqnarray}
		It follows that $\psi(\cdot, x)\equiv0$ in $B(x_0, nr)\cap\overline{\Omega}$. Now when $n$ is sufficiently large, $B(x_0, nr)\cap\overline{\Omega}$ will cover $\overline{\Omega}$, then $\psi\equiv0$ in $[0, \hat{a})\times\overline{\Omega}$. Thus $L_\lambda$ is strictly positive.
		
		Now for any positive nonzero fixed point of $L_\lambda$, denoted by $\psi\in L^1_+((0, \hat{a}), X)$, and any $\psi^*\in L^\infty_+((0, \hat{a}), X^*)$ with $L_\lambda^*\psi^*\neq0$, where $X^*$ denotes the dual space of $X$, one has
		$$
		\langle \psi, \psi^*\rangle=\langle L_\lambda\psi, \psi^*\rangle>0.
		$$
		Here $\langle \;,\; \rangle$ denotes the duality paring between $L^1((0, \hat a), X)$ and $L^\infty((0, \hat a), X^*)$. It follows that all positive nonzero fixed points of $L_\lambda$ are conditionally strictly positive and so is $\mathcal{F}_\lambda$.
	\end{proof}
	
	\subsection{Criteria}			
	Since the condition $s(\mathcal{A})>s(\mathcal{B}_1+\mathcal{C})$ turns out to be hard to check, we now provide relatively easily verifiable and general sufficient conditions ensuring that $s(\mathcal{A})$ is the principal eigenvalue of $\mathcal{A}$ for the sake of applications. This leads to our main theorems on the existence of principal eigenvalues of $\mathcal{A}$ in this section.
	
	\subsubsection{Criterion I}
	Before giving the first criterion, we first provide a lower bound for $(\alpha I -\mathcal{B}_1-\mathcal{C})^{-1}$ with $\alpha>\alpha^{**}$.
	
	\begin{proposition}\label{lower}
		Assume that $\mu_{\max}:=\sup_{a\in(0, \hat a)}\overline{\mu}(a)<\infty$. For any $\al>\alpha^{**}$, any $\psi\in X_+$ and any $\theta>0$, the inverse $(\alpha I -\mathcal{B}_1-\mathcal{C})^{-1}: \mathcal{X}\rightarrow\mathcal{X}_0$ satisfies the following estimate, 
		$$ 
		\left[(\alpha I -\mathcal{B}_1-\mathcal{C})^{-1}(0, e^{-\theta\cdot}\psi)\right](a, x)\geq\frac{M(\alpha, D, \theta)}{1-G_\alpha(x)}\left(0, e^{-(\alpha+D+\mu_{\max})a}\psi(x)\right) \; \text{ a.e.} \; (a, x)\in(0, \hat{a})\times\overline{\Omega}, 
		$$
where $M(\alpha, D, \theta)>0$ is a constant that will be determined in the proof.
\end{proposition}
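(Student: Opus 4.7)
The plan is to obtain the lower bound by working directly with the explicit resolvent formula \eqref{resolventsolution1} derived in the proof of Proposition \ref{rGalpha}, and then discarding nonnegative terms carefully.

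First, I would apply \eqref{resolventsolution1} to the input $(\widehat{\eta},\widehat{\varphi})=(0,e^{-\theta\cdot}\psi)$ to obtain $(\alpha I-\mathcal{B}_1-\mathcal{C})^{-1}(0,e^{-\theta\cdot}\psi)=(0,\phi)$, where
\begin{equation*}
\phi(a,x)=e^{-(\alpha+D)a}\pi(0,a,x)\eta(x)+\int_0^a e^{-(\alpha+D)(a-s)}\pi(s,a,x)e^{-\theta s}\psi(x)\,ds.
\end{equation*}
The crucial structural observation is that every operator appearing in the construction of $\eta$, namely $\Pi(\tau,s)$ and $(I-\mathcal{G}_\alpha)^{-1}$, acts as a pointwise multiplication in $x$. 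Since $\alpha>\alpha^{**}$ gives $1-G_\alpha(x)>0$ on $\overline{\Omega}$, one can pull the factor $\psi(x)$ out and write explicitly
\begin{equation*}
\eta(x)=\frac{\psi(x)}{1-G_\alpha(x)}\int_0^{\hat{a}}\beta(s,x)\int_0^s e^{-(\alpha+D)(s-\tau)}\pi(\tau,s,x)e^{-\theta\tau}\,d\tau\,ds.
\end{equation*}

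Next, I would discard the second (nonnegative) term of $\phi$ and replace the age-weights $\pi(0,a,x)$ and $\pi(\tau,s,x)$ by their uniform lower bounds, using the hypothesis $\mu_{\max}<\infty$, namely $\pi(\tau,a,x)\ge e^{-\mu_{\max}(a-\tau)}$ for every $0\le\tau\le a<\hat{a}$ and $x\in\overline{\Omega}$. This produces the desired prefactor $e^{-(\alpha+D+\mu_{\max})a}$, and reduces the problem to establishing a positive uniform lower bound on
\begin{equation*}
H(x):=\int_0^{\hat{a}}\beta(s,x)\int_0^s e^{-(\alpha+D+\mu_{\max})(s-\tau)}e^{-\theta\tau}\,d\tau\,ds,\quad x\in\overline{\Omega}.
\end{equation*}

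The final step is to invoke Assumption \ref{Ass1.1}-(iv) to bound $\beta(s,x)\ge\underline{\beta}(s)$ pointwise, giving the explicit candidate
\begin{equation*}
M(\alpha,D,\theta):=\int_0^{\hat{a}}\underline{\beta}(s)\int_0^s e^{-(\alpha+D+\mu_{\max})(s-\tau)}e^{-\theta\tau}\,d\tau\,ds,
\end{equation*}
which depends only on the stated parameters. The strict positivity $M>0$ is the main obstacle to address: it follows from the nontriviality of $\underline{\beta}$ on $(0,\hat{a})$ (the inner $\tau$-integral is strictly positive on $\{s>0\}$, so $M>0$ as soon as $\underline{\beta}\not\equiv 0$), which is already built into the standing framework that guarantees the existence of $s(\mathcal{B}_1+\mathcal{C})=\alpha^{**}$ via Proposition \ref{rGalpha} and Assumption \ref{mu}. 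Assembling the two bounds yields the claimed inequality.
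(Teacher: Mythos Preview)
Your proposal is correct and follows essentially the same approach as the paper: both use the explicit resolvent formula from Proposition \ref{rGalpha}, drop the nonnegative second summand of $\phi$, exploit that all operators act as pointwise multiplications in $x$ so that $\psi(x)$ factors out, and bound $\pi(0,a,x)\ge e^{-\mu_{\max}a}$ to produce the prefactor $e^{-(\alpha+D+\mu_{\max})a}$. The only cosmetic difference is that the paper defines $M(\alpha,D,\theta):=\min_{x\in\overline{\Omega}}\int_0^{\hat a}\beta(a,x)\int_0^a e^{-(\alpha+D)(a-\gamma)}\pi(\gamma,a,x)e^{-\theta\gamma}\,d\gamma\,da$ (keeping the exact $\pi$ inside) and only afterwards bounds it below by the $\underline{\beta},\overline{\mu}$ expression, whereas you bound $\pi(\tau,s,x)\ge e^{-\mu_{\max}(s-\tau)}$ one step earlier; your constant is slightly smaller but equally serviceable.
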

	
	\begin{proof}
		First note that $e^{-\theta\cdot}\psi\in L^1_+((0, \hat{a}), X)$ for any $\psi\in X_+$ and then define 
		\begin{eqnarray}
			&&I_1(\alpha, D, \theta, x):=\int_{0}^{\hat{a}}\beta(a, x)\int_{0}^{a}e^{-(\alpha+D)(a-\gamma)}\pi(\gamma, a, x)e^{-\theta \gamma}d\gamma da,\nonumber\\
			&&I_2(\alpha, D, a, x)=e^{-(\alpha+D)a}\pi(0, a, x).\nonumber
		\end{eqnarray} 
Also notice that $\min_{x\in\overline{\Omega}}I_1(\alpha, D, \theta, x)<\infty$ for $\hat{a}\le\infty$ due to $\alpha>\alpha^{**}>-D-\widetilde{\mu}$. Next observe that
		$$
		I_2(\alpha, D, a, x)\ge e^{-(\alpha+D+\mu_{\max})a} \text{ and }  \min_{x\in\overline{\Omega}}I_2(\alpha, D, \cdot, x)\in L^1(0, \hat{a})
		$$ 
		for $\hat{a}\le\infty$ again due to $\alpha>\alpha^{**}>-D-\widetilde{\mu}$. It follows from \eqref{g_12} that
		$$ 
		\left[(\alpha I -\mathcal{B}_1-\mathcal{C})^{-1}(0, e^{-\theta\cdot}\psi)\right](a, x)\geq\big(0,\; \frac{I_1(\alpha, D, \theta, x)}{1-G_\alpha(x)}e^{-(\alpha+D+\mu_{\max})a}\psi(x)\big) \text{ in }\mathcal X_0
		$$
		for any $\theta>0$ and $\psi\in X_+$. Thus $M(\alpha, D, \theta)$ is given by
		$$ 
		M(\alpha, D, \theta):=\min_{\overline{\Omega}}I_1(\alpha, D, \theta, \cdot)\ge \int_0^{\hat a}\underline{\beta}(a)\int_0^ae^{-(\al+D)(a-\ga)-\int_\ga^a\overline{\mu}(s)ds-\theta\ga}d\ga da. 
		$$
		Then the result follows.
	\end{proof}
	
	\begin{theorem}[Existence of principal eigenvalues - I]\label{I}
		Assume that $\mu_{\max}:=\sup_{a\in(0, \hat a)}\overline{\mu}(a)<\infty$ and
		\begin{eqnarray}\label{criterion}
			x\to\frac{1}{1-G_{\alpha^{**}}(x)}\notin L^1_{loc}(\overline{\Omega}),
		\end{eqnarray}
		then $s(\mathcal{A})$ is the principal eigenvalue of $\mathcal{A}$, where $G_\alpha(x)$ is defined in \eqref{Galpha(x)}.
	\end{theorem}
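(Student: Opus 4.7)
The plan is to reduce to Theorem \ref{principle} by verifying $s(\mathcal{A})>s(\mathcal{B}_1+\mathcal{C})=\alpha^{**}$. Via Corollary \ref{corollary} this reduces further to exhibiting some $\lambda^{*}>\alpha^{**}$ at which the positive operator $\mathcal{F}_{\lambda^{*}}$ from \eqref{Flambdainverse} satisfies $r(\mathcal{F}_{\lambda^{*}})\geq 1$. I will do this by producing a nonzero positive element $\Psi\in\mathcal{X}^{+}$ with $\mathcal{F}_{\lambda^{*}}\Psi\geq\Psi$; iterating and invoking monotonicity of the $\mathcal{X}$-norm on positive elements gives $\|\mathcal{F}_{\lambda^{*}}^{n}\|\geq 1$ for every $n$, hence $r(\mathcal{F}_{\lambda^{*}})\geq 1$.

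Since $\alpha\mapsto G_{\alpha}(x)$ is strictly decreasing and $\max_{\overline\Omega}G_{\alpha^{**}}=1$ by Proposition \ref{rGalpha}, the hypothesis \eqref{criterion} forces the zero set of $1-G_{\alpha^{**}}$ to carry a non-integrable singularity. A standard compactness subdivision of the compact set on which the integral diverges then yields a point $x_{0}\in\overline\Omega$ such that
$$\int_{B_{\delta}(x_{0})\cap\Omega}\frac{dy}{1-G_{\alpha^{**}}(y)}=+\infty\text{ for every }\delta>0.$$
Exploiting $J(0)>0$ and the continuity of $J$, I fix $\delta>0$ so small that $J(z)\geq J(0)/2$ whenever $|z|\leq 2\delta$, and I pick a continuous bump $\psi\in X_{+}$ with $\psi\equiv 1$ on $B_{\delta/2}(x_{0})\cap\Omega$ and $\mathrm{supp}\,\psi\subset B_{\delta}(x_{0})\cap\Omega$ (this choice lives in $X_+$ for both $X=L^{1}(\Omega)$ and $X=C(\overline\Omega)$). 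Setting $\theta:=\lambda+D+\mu_{\max}$ and $\Psi:=(0,e^{-\theta a}\psi)$, Proposition \ref{lower} and the definition of $\mathcal{B}_{2}$ give, for $x\in B_{\delta/2}(x_{0})\cap\Omega$,
$$[\mathcal{F}_{\lambda}\Psi](a,x)\geq e^{-\theta a}\,\frac{DJ(0)}{2}\,M(\lambda,D,\theta)\int_{B_{\delta/2}(x_{0})\cap\Omega}\frac{dy}{1-G_{\lambda}(y)},$$
while on the complement of $\mathrm{supp}\,\psi$ the inequality $\mathcal{F}_{\lambda}\Psi\geq\Psi=0$ is automatic.

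To finish, I observe that $G_{\lambda}\uparrow G_{\alpha^{**}}$ pointwise as $\lambda\downarrow\alpha^{**}$, so the monotone convergence theorem forces the spatial integral above to diverge to $+\infty$, whereas $M(\lambda,D,\theta)$ remains bounded below by a positive constant (its explicit formula in Proposition \ref{lower} is continuous in $\lambda$, and the finiteness of $\mu_{\max}$ is what guarantees that the $\overline{\mu}$-weighted integrand does not collapse to zero). Consequently, for $\lambda^{*}>\alpha^{**}$ sufficiently close to $\alpha^{**}$ the right-hand side dominates $e^{-\theta a}\psi(x)=\Psi(a,x)$ on $(0,\hat a)\times\Omega$, giving $\mathcal{F}_{\lambda^{*}}\Psi\geq\Psi$. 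Corollary \ref{corollary} and Theorem \ref{principle} then complete the proof.

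The main technical obstacle is to make the transition from a one-sided pointwise bound on the resolvent of $\mathcal{B}_{1}+\mathcal{C}$ (Proposition \ref{lower}) into a genuine supersolution inequality for $\mathcal{F}_{\lambda}$. The key trick is the exponential matching $\theta=\lambda+D+\mu_{\max}$, which synchronizes the age decay of input and output; after that the spatial competition reduces precisely to the non-local-integrability guaranteed by \eqref{criterion}, with the role of $J$ being to propagate the pointwise blow-up of $(1-G_{\lambda})^{-1}$ uniformly to a small ball around $x_{0}$.
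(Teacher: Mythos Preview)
Your approach is correct and, in fact, somewhat cleaner than the paper's own argument for this theorem. Where the paper argues by contradiction---assuming $r(\mathcal{F}_\alpha)<1$ for all $\alpha>\alpha^{**}$, applying $\mathcal{F}_\alpha$ to the globally defined test function $(0,e^{-\theta a})$, iterating $n$ times, and then invoking Gelfand's formula to reach a contradiction---you argue directly: you localize the test function near the singular point $x_0$ and obtain the supersolution inequality $\mathcal{F}_{\lambda^*}\Psi\geq\Psi$ in a single step, from which $r(\mathcal{F}_{\lambda^*})\geq 1$ follows by normality of the cone. Your argument is thus closer in spirit to the paper's proof of Theorem~\ref{II} than to its proof of the present theorem. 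Both approaches rest on the same pointwise lower bound from Proposition~\ref{lower} and the same synchronization $\theta=\lambda+D+\mu_{\max}$, so the essential content is identical; your route simply avoids the iteration and the contradiction framing.

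One presentational gap should be closed. You establish the displayed lower bound on $[\mathcal{F}_\lambda\Psi](a,x)$ only for $x\in B_{\delta/2}(x_0)\cap\Omega$, and you dispose of the complement of $\mathrm{supp}\,\psi$ trivially, but the annulus $\bigl(B_\delta(x_0)\setminus B_{\delta/2}(x_0)\bigr)\cap\Omega$, where $0<\psi(x)\leq 1$, is not covered by either case. This is easily repaired: for $x\in B_\delta(x_0)$ and $y\in B_{\delta/2}(x_0)$ one still has $|x-y|<3\delta/2<2\delta$, so the same bound $J(x-y)\geq J(0)/2$ applies, and your displayed inequality therefore holds for every $x\in B_\delta(x_0)\cap\Omega\supset\mathrm{supp}\,\psi$. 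With that one-line observation the inequality $\mathcal{F}_{\lambda^*}\Psi\geq\Psi$ is justified on all of $(0,\hat a)\times\Omega$ and the proof is complete.
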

	
	\begin{proof}
		The idea of the proof below traced back to Coville \cite{coville2010simple} (see also Shen and Vo \cite{shen2019nonlocal}). For the completeness and reader's convenience, we include the necessary modifications and provide a detailed proof.
		
		In the following proof, we only show the result in the case $X=C(\overline{\Omega})$ and put the case $X=L^1(\Omega)$ in the Appendix. By contradiction, assume that $s(\mathcal{A})$ is not the principal eigenvalue of $\mathcal{A}$, by the contrapositive statement of Theorem \ref{principle} and Remark \ref{sasb}, we have  $s(\mathcal{A})=s(\mathcal{B}_1+\mathcal{C})$. It follows from Corollary \ref{corollary} that
		\begin{eqnarray}\label{r<1}
			r(\mathcal{F}_\alpha)=r(\mathcal{B}_2(\alpha I -\mathcal{B}_1-\mathcal{C})^{-1})<1, \;\forall\,\alpha>s(\mathcal{B}_1+\mathcal{C})=\al^{**}.
		\end{eqnarray} 		
		Now we choose $\theta=\alpha+D+\mu_{\max}$. Then Proposition \ref{lower} implies that 
		$$ 
		[(\alpha I -\mathcal{B}_1-\mathcal{C})^{-1}(0, e^{-\theta \cdot})](a, x)\geq\left(0,\; \frac{M(\alpha, D, \theta)}{1-G_\alpha(x)}e^{-\theta a}\right)\ge(0, 0), \;(a, x)\in(0, \hat{a})\times\overline{\Omega}. 
		$$
		Applying $\mathcal{B}_2$ to both sides of the above estimates, we find for $(a, x)\in(0, \hat a)\times\overline\Omega$ that
		\begin{eqnarray}\label{1iteration}
			&&[\mathcal{B}_2(\alpha I -\mathcal{B}_1-\mathcal{C})^{-1}(0, e^{-\theta \cdot})](a, x)\nonumber\\
			&& \quad = D\int_{\Omega}J(x-y)[(\alpha I -\mathcal{B}_1-\mathcal{C})^{-1}(0, e^{-\theta\cdot})](a, y)dy\nonumber,\\
			&& \quad \geq \left(0,\; e^{-\theta a}\int_{\Omega}J(x-y)\frac{DM(\alpha, D, \theta)}{1-G_\alpha(y)}dy\right).
		\end{eqnarray} 
		By \eqref{1iteration} and Proposition \ref{lower}, we find for $(a, x)\in(0, \hat{a})\times\overline{\Omega}$ that
		\begin{eqnarray}
			&&\left((\alpha I -\mathcal{B}_1-\mathcal{C})^{-1}\mathcal{B}_2(\alpha I -\mathcal{B}_1-\mathcal{C})^{-1}(0, e^{-\theta\cdot})\right)(a, x)\\
			&&\quad \ge\left((\alpha I -\mathcal{B}_1-\mathcal{C})^{-1}\left(0,\; e^{-\theta \cdot}\int_{\Omega}J(\cdot-y)\frac{DM(\alpha, D, \theta)}{1-G_\alpha(y)}dy\right)\right)(a, x)\nonumber,\\
			&&\quad \geq\left(0, \; e^{-\theta a}\frac{M(\alpha, D, \theta)}{1-G_\alpha(x)}\int_{\Omega}J(x-y)\frac{DM(\alpha, D, \theta)}{1-G_\alpha(y)}dy \right).\nonumber
		\end{eqnarray}
		Applying $\mathcal{B}_2$ to both sides of the above estimates, we have
		\begin{eqnarray}
			&&\left((\mathcal{B}_2(\alpha I -\mathcal{B}_1-\mathcal{C})^{-1})^2(0, e^{-\theta \cdot})\right)(a, x) \nonumber \\
			&&\quad \geq\left(0, \; e^{-\theta a}\int_{\Omega}J(x-y)\frac{DM(\alpha, D, \theta)}{1-G_\alpha(y)}\int_{\Omega}J(y-z)\frac{DM(\alpha, D, \theta)}{1-G_\alpha(z)}dzdy\right).
		\end{eqnarray}
		Repeating the above arguments, we find for $(a, x_0)\in(0, \hat a)\times\overline{\Omega}$ and $n\ge1$ the following estimate
		$$ 
		\left(\!(\mathcal{B}_2(\alpha I \!-\!\mathcal{B}_1-\mathcal{C})^{-1})^n(0, e^{-\theta \cdot})\!\right)\!(a, x_0)\!\geq\! \left(\!0, \; e^{-\theta a}\int_{\Omega}\!\!\cdots\!\!\int_{\Omega}\!\prod_{m=1}^n\left[J(x_{m-1}\!-\! x_m)\frac{DM(\alpha, D, \theta)}{1-G_\alpha(x_m)}\right]dx_n \! \cdots  dx_1\!\right). 
		$$
		As a result, 
		\begin{eqnarray}\label{com}
			\norm{(\mathcal{B}_2(\alpha I -\mathcal{B}_1-\mathcal{C})^{-1})^n}\ge  \max_{x_0\in\overline{\Omega}}\int_{\Omega}\cdots\int_{\Omega}\prod_{m=1}^{n}\left[J(x_{m-1}-x_m)\frac{DM(\alpha, D, \theta)}{1-G_\alpha(x_m)}\right]dx_n\cdots dx_1,
		\end{eqnarray}
		which implies that for any $x_0\in\overline{\Omega}$ and $\delta>0$, 
		\begin{eqnarray}
			&&\norm{(\mathcal{B}_2(\alpha I -\mathcal{B}_1-\mathcal{C})^{-1})^n}\nonumber\\
			&&\quad \geq\int_{\Omega\cap B(x_0, \delta)}\cdots\int_{\Omega\cap B(x_0, \delta)}\prod_{m=1}^{n}\left[J(x_{m-1}-x_m)\frac{DM(\alpha, D, \theta)}{1-G_\alpha(x_m)}\right]dx_n\cdots dx_1\nonumber\\
			&&\quad \geq\left[\inf_{x\in\Omega\cap B(x_0, \delta)}\int_{\Omega\cap B(x_0, \delta)}J(x-y)\frac{DM(\alpha, D, \theta)}{1-G_\alpha(y)}dy\right]^n,
		\end{eqnarray}
		where $B(x_0, \delta)$ is the open ball in $\mathbb{R}^N$ centered at $x_0$ with radius $\delta$. We can use \eqref{r<1} and Gelfand's formula for the spectral radius of a bounded linear operator to find that 
		\begin{eqnarray}\label{1}
			1\geq\inf_{x\in\Omega\cap B(x_0, \delta)}\int_{\Omega\cap B(x_0, \delta)}J(x-y)\frac{DM(\alpha, D, \theta)}{1-G_\alpha(y)}dy:=I(x_0, \delta, \alpha, D)
		\end{eqnarray}
		for all $x_0\in\overline{\Omega}$ and $\delta>0$.
		
		Since $J$ is continuous and $J(0)>0$, there exist $r>0$ and $c_0>0$ such that $J\geq c_0$ on $B(0, r)$, the open ball in $\mathbb{R}^N$ centered at $0$ with radius $r$. Hence,
		\begin{eqnarray}
			I(x_0, \delta, \alpha, D) &\geq& \inf_{x\in\Omega\cap B(x_0, \delta)}\int_{\Omega\cap B(x_0, \delta)\cap B(x, r)}J(x-y)\frac{DM(\alpha, D, \theta)}{1-G_\alpha(y)}dy\nonumber\\
			&\geq& c_0\inf_{x\in\Omega\cap B(x_0, \delta)}\int_{\Omega\cap B(x_0, \delta)\cap B(x, r)}\frac{DM(\alpha, D, \theta)}{1-G_\alpha(y)}dy\nonumber\\
			&=& c_0\int_{\Omega\cap B(x_0, \delta)}\frac{DM(\alpha, D, \theta)}{1-G_\alpha(y)}dy
		\end{eqnarray}
		provided $2\delta\leq r$ so that $B(x_0, \delta)\subset B(x, r)$ whenever $x\in\overline{B(x_0, \delta)}$. In particular, for any $x_0\in\overline{\Omega}$, 
		$$ 
		I(x_0, r/2, \alpha, D)\geq c_0\int_{\Omega\cap B(x_0, r/2)}\frac{DM(\alpha, D, \theta)}{1-G_\alpha(y)}dy. 
		$$
		Since $\frac{1}{1-G_{\alpha^{**}}}\notin L^1_{loc}(\overline{\Omega})$, there exists $x_*\in\overline{\Omega}$ such that
		$$ \frac{1}{1-G_{\alpha^{**}}}\notin L^1(\overline{\Omega}\cap B(x_*, r/2)), $$
		which implies the existence of some $\epsilon>0$ small enough such that
		$$ 
		c_0\int_{\overline{\Omega}\cap B(x_*, r/2)}\frac{DM(\alpha, D, \theta)}{1-G_{\alpha^{**}+\epsilon}(y)}dy\geq 2. 
		$$
		In particular, $I(x_*, r/2, \alpha^{**}+\epsilon, D)\geq 2$, which contradicts \eqref{1}.
	\end{proof}
	
\subsubsection{Criterion II}
In this subsection, we give the second non-locally-integrable condition similar as in \eqref{criterion} to check the existence of the principal eigenvalue of $\mathcal{A}$. 	
Before proceeding, we first provide an assumption on $\beta$ to make sure that the principal eigenfunction $\phi$ can attain its positive maximum and minimum in $[0, a_2]\times\overline{\Omega}$ for some $a_2\in(0, \hat{a})$.

\begin{assumption}\label{beta}
There exists $a_2\in(0, \hat a)$ such that $\overline\beta(a)=0$ for $a\in[a_2, \hat a)$ or equivalently $\beta(a, x)=0$ for $(a, x)\in [a_2, \hat{a})\times\overline{\Omega}$.
	\end{assumption}
	
We would like to mention that the above assumption is somehow reasonable for applications. It means that the birth rate becomes zero when the age of the individuals approaches the maximal age $\hat a$. 
	
		
		Now, let us rewrite the function space $\mathcal{X}$ as follows:
		$$
		\mathcal{X}=X\times L^1((0, \hat{a}), X)=X\times \left(L^1((0, a_2), X)\times L^1((a_2, \hat{a}), X)\right)
		$$
		with a function $\psi\in L^1((0, \hat{a}), X)$ identified to $(\psi|_{(0, a_2)}, \psi|_{(a_2, \hat{a})})\in L^1((0, a_2), X)\times L^1((a_2, \hat{a}), X)$. Define the operator $\mathcal{\widehat{B}}$ in $X\times L^1((0, a_2), X)$ by
		$$
		\mathcal{\widehat{B}}(0, \psi)=\left(-\psi(0), \; -\partial_a\psi+D[K-I]\psi(a)-\mu(a, \cdot)\psi(a)\right),\; \text{ for }(0, \psi)\in \{0\}\times W^{1, 1}((0, a_2), X).
		$$
		Note that $\mathcal{\widehat{B}}$ is a closed operator under Assumption \ref{beta}. Moreover, define the bounded operator $\mathcal{\widehat{C}}$ as follows:
		$$
		\mathcal{\widehat{C}}(0, h)=\left(\int_{0}^{a_2}\beta(a, \cdot)h(a)da, \;  0\right), \text{ for }(0, h)\in \{0\}\times L^1((0, a_2), X),
		$$
		so that $\mathcal{\widehat{C}}\in \mathcal{L}\left(\{0\}\times L^1((0, a_2), X), X\times\{0_{L^1}\}\right)$. Define the operator $\mathcal{\widehat{A}}$ by $\mathcal{\widehat{A}}:=\mathcal{\widehat{B}}+\mathcal{\widehat{C}}$ with ${\rm dom}(\mathcal{\widehat{A}})=\{0\}\times W^{1, 1}((0, a_2), X)$. 
		
		Next recalling that $\widetilde V$ is the interval defined in \eqref{I interval}, let us show $\sig(\mathcal{\widehat{A}})\cap \widetilde V=\sig(\mathcal{A})\cap \widetilde V$. To do so, it suffices to show $\rho(\mathcal{\widehat{A}})\cap \widetilde V=\rho(\mathcal{A})\cap \widetilde V$. Recalling the argument in Proposition \ref{sA}, it says that 
		$$
		\la\in\rho(\mathcal{A})\cap \widetilde V\Leftrightarrow \la\in \widetilde V \text{ and }1\in\rho(\mathcal{M}_\la).
		$$
		Similarly, Proposition \ref{sA} with $\hat{a}=a_2$ applied to $\mathcal{\widehat{A}}$ gives that
		$$
		\la\in\rho(\mathcal{\widehat{A}})\cap \widetilde V\Leftrightarrow \la\in \widetilde V \text{ and }1\in\rho(\mathcal{\widehat{M}}_\la),
		$$
		where $\mathcal{\widehat{M}}_\la\in\mathcal{L}(X)$ is defined for $\la\in \widetilde V$ by
		$$
		\mathcal{\widehat{M}}_\lambda\eta=\int_{0}^{a_2}\beta(a, \cdot)e^{-\lambda a}\mathcal{U}(0, a)\eta\, da,\; \forall\eta\in X. 
		$$
		But under Assumption \ref{beta}, we have $\mathcal{M}_\lambda=\mathcal{\widehat{M}}_\la$ for all $\la\in \widetilde V$. It follows that $\sig(\mathcal{\widehat{A})}\cap \widetilde V=\sig(\mathcal{A})\cap \widetilde V$, thus we can study the principal spectral theory of $\mathcal{\widehat{A}}$ instead of $\mathcal{A}$ in the following, provided Assumption \ref{beta} holds. Further, in order to not introduce too many notations, we still denote $\mathcal{A}$ and $\mathcal{B}$ under Assumption \ref{beta}.   
		
		\begin{remark}\label{mubeta}
			{\rm Under Assumption \ref{beta}, the eigenvalue problem is inverted from infinite maximum age $\hat{a}=\infty$ or finite maximum age $\hat{a}<\infty$, with possibly unbounded death rate $\mu$, into finite maximum age $a_2<\infty$ with bounded death rate $\mu$. Hence we have that
				\begin{itemize}
					
					\item [(i)] Assumption \ref{mu} is for $\hat a=\infty$, while here we replace $\hat a$ by $a_2<\infty$. Thus $s(\mathcal{A})$ and $s(\mathcal{B}_1+\mathcal{C})$ always exist now.
					
					\item [(ii)] Assumption \ref{irreducible} can be modified as follows: there exists no $a_0$ such that $\underline{\beta}(a)=0$ a.e. $[a_0, a_2]$, if needed, see Section \ref{Strong Maximum Principle}. Further, it is equivalent to $\int_a^{a_2}\underline{\beta}(l)dl>0$ for any $a\in[0, a_2)$.
				\end{itemize}  
			}
		\end{remark} 
		
		Observe now that the principal eigenfunction $v$, if exists, satisfies $v>0$ in $[0, a_2]\times\overline{\Omega}$ due to $\mu\in C(\overline{\Omega}, L^\infty_+(0, a_2))$ now. Hence we will indicate the auxiliary eigenvalue problem corresponding to $\mathcal{\widehat{A}}$ instead of $\mathcal{A}$ in the following context, as long as we use the fact that $v>0$ in $[0, a_2]\times\overline{\Omega}$.  
		
		Now we provide the second criteria under Assumption \ref{beta}.
		\begin{theorem}[Existence of principal eigenvalues - II]\label{II}
			Let Assumption \ref{beta} hold. Assume that
			\begin{eqnarray}\label{criteria}
				x\to\frac{1}{\alpha^{**}-\alpha(x)}\notin L^1_{loc}(\overline{\Omega})
			\end{eqnarray}
and that for each $x\in\overline{\Omega}$, the operator $\mathcal{B}_1^x+\mathcal{C}^x$ possesses a positive eigenvector $\phi\in W^{1, 1}(0, a_2)$ corresponding to $\alpha(x)$, then $s(\mathcal{A})$ is the principal eigenvalue of $\mathcal{A}$. Here $\alpha(x)$ is defined in Proposition \ref{Galphax} and $\mathcal{B}_1^x+\mathcal{C}^x$ is defined in Remark \ref{age-operator} in $(0, a_2)$. 
		\end{theorem}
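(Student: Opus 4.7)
The plan is to argue by contradiction, just as in Theorem~\ref{I}, but to replace the crude lower bound of Proposition~\ref{lower} by a sharp resolvent identity built from the pointwise eigenvectors $\phi(\cdot,x)$. So suppose $s(\mathcal{A})=s(\mathcal{B}_1+\mathcal{C})=\alpha^{**}$; by Corollary~\ref{corollary} this forces $r(\mathcal{F}_\alpha)<1$ for every $\alpha>\alpha^{**}$, and I will contradict this by producing a lower bound on $r(\mathcal{F}_\alpha)$ that diverges as $\alpha\downarrow\alpha^{**}$.

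Normalizing the pointwise eigenvector so that $\phi(0,x)=1$, so that $\phi(a,x)=e^{-(\alpha(x)+D)a}\pi(0,a,x)$, a short direct computation (checking both the renewal condition at $a=0$, which amounts to $G_{\alpha(x)}(x)=1$, and the age equation) yields $(\mathcal{B}_1+\mathcal{C})(0,f\phi)=(0,\alpha(\cdot)f\phi)$ for every $f\in X_+$. Consequently, for each $\alpha>\alpha^{**}$ one has the pointwise identity
\[
(\alpha I-\mathcal{B}_1-\mathcal{C})^{-1}(0,f\phi)=\left(0,\frac{f\,\phi}{\alpha-\alpha(\cdot)}\right),\qquad\mathcal{F}_\alpha(0,f\phi)(a,x)=\left(0,\,D\!\int_\Omega\! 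J(x-y)\,\frac{f(y)\,\phi(a,y)}{\alpha-\alpha(y)}\,dy\right).
\]
Under Assumption~\ref{beta} together with Remark~\ref{mubeta}, the functions $\alpha(\cdot)$ and $\mu$ are continuous and bounded on the compact set $[0,a_2]\times\overline\Omega$, so there exists a constant $\kappa>0$ with $\phi(a,y)\geq\kappa\,\phi(a,x)$ uniformly for $a\in[0,a_2]$ and $x,y\in\overline\Omega$. This decouples the age and space variables and reduces the iteration to the scalar positive operator $\mathcal{T}_\alpha\in\mathcal L(X)$ defined by $(\mathcal{T}_\alpha f)(x):=D\int_\Omega J(x-y)\,\frac{f(y)}{\alpha-\alpha(y)}\,dy$; induction then yields $\mathcal{F}_\alpha^n(0,\phi)\geq\kappa^n(0,\phi\cdot\mathcal{T}_\alpha^n\mathbf 1)$, and Gelfand's formula gives $r(\mathcal{F}_\alpha)\geq\kappa\,r(\mathcal{T}_\alpha)$.

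It remains to lower-bound $r(\mathcal{T}_\alpha)$ using the non-integrability hypothesis, which is done exactly as in the proof of Theorem~\ref{I}: using $J\geq c_0$ on $B(0,r)$ and restricting successive integrations to $B(x_0,r/2)$, one finds
\[
(\mathcal{T}_\alpha^n\mathbf 1)(x_0)\geq (Dc_0)^n\left(\int_{\Omega\cap B(x_0,r/2)}\frac{dy}{\alpha-\alpha(y)}\right)^n,\qquad\text{hence}\qquad r(\mathcal{T}_\alpha)\geq Dc_0\int_{\Omega\cap B(x_*,r/2)}\frac{dy}{\alpha-\alpha(y)}
\]
for any $x_*\in\overline\Omega$. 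Choosing $x_*$ so that $y\mapsto 1/(\alpha^{**}-\alpha(y))$ fails to be integrable on $B(x_*,r/2)\cap\Omega$, the monotone convergence theorem forces the right-hand integral, and hence $r(\mathcal{F}_\alpha)$, to diverge as $\alpha\downarrow\alpha^{**}$, contradicting $r(\mathcal{F}_\alpha)<1$. The main obstacle is to establish the uniform ratio bound $\phi(a,y)/\phi(a,x)\geq\kappa$ that allows the iteration to be factored through the scalar operator $\mathcal{T}_\alpha$ — this is exactly where the truncation provided by Assumption~\ref{beta} is indispensable, since it keeps $\mu$ bounded and prevents $\phi$ from degenerating as $a\to\hat a$.
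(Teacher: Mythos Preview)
Your argument is correct. Both your proof and the paper's hinge on the same key resolvent identity
\[
(\alpha I-\mathcal{B}_1-\mathcal{C})^{-1}(0,f\phi)=\bigl(0,\tfrac{f\phi}{\alpha-\alpha(\cdot)}\bigr),
\]
which reduces the resolvent of the age-structured operator to multiplication by $(\alpha-\alpha(\cdot))^{-1}$ once one feeds in the pointwise eigenvectors. The difference lies in how this identity is exploited.

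The paper, following Liang et al.\ and Bao--Shen, avoids iteration entirely: after establishing continuity of $x\mapsto\phi(\cdot,x)$ (via a compactness argument, though your explicit formula makes this immediate), it fixes a single $\zeta>\alpha^{**}$, localizes $\phi$ with a cutoff $p$ supported near a point where the non-integrability is felt, and checks directly that $\mathcal{F}_\zeta(0,p\phi)\geq 2(0,p\phi)$. This one-step super-eigenvector inequality gives $r(\mathcal{F}_\zeta)>1$ at once. Your route instead mirrors the proof of Theorem~\ref{I}: you use the global ratio bound $\phi(a,y)\geq\kappa\phi(a,x)$ to factor the iteration of $\mathcal{F}_\alpha$ through the scalar kernel operator $\mathcal{T}_\alpha$, then push $\alpha\downarrow\alpha^{**}$ and invoke monotone convergence. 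Your approach is slightly less economical (it needs the uniform $\kappa$ and the Gelfand limit), but it has the conceptual advantage of unifying Theorems~\ref{I} and~\ref{II} under a single template; the paper's cutoff construction, on the other hand, bypasses both the iteration and the limit $\alpha\downarrow\alpha^{**}$, producing the contradiction at a fixed $\zeta$.
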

		
\begin{proof}
The idea of the proof below comes from Liang et al. \cite[Lemma 3.8]{liang2019principal} or Bao and Shen \cite[Proposition 3.1]{bao2017criteria}.
			
\textbf{First step.} By assumption, for any $x\in\overline{\Omega}$, $\phi(\cdot, x):=[\phi(x)](\cdot)$ as a principal eigenfunction of $\mathcal{B}_1^x+\mathcal{C}^x$ is belonging to $W^{1, 1}(0, a_2)$. Further, we can normalize the family $\{\phi(\cdot, x)\}_{x\in\overline\Omega}$ such that $\norm{\phi(\cdot, x)}_{L^1(0, a_2)}=1$ for any $x\in\overline\Omega$. Now we will prove that the eigenfunction $\phi(\cdot, x)$ is continuous for all $x\in\overline{\Omega}$. 
			
To this aim, let us first write down the equation that $\phi$ satisfies,
\begin{equation}\label{E}
\begin{cases}
					\partial_a\phi(a, x)=-(D+\mu(a, x))\phi(a, x)-\alpha(x)\phi(a, x), \; a\in(0, a_2),\\
					\phi(0, x)=\int_0^{a_2}\beta(a, x)\phi(a, x)da.
				\end{cases}
			\end{equation}
			Fix $x_0\in\overline{\Omega}$ and let us choose a sequence $\{x_n\}_{n\ge1}\subset\overline\Omega$ satisfying $x_n\to x_0$ as $n\to\infty$. Consider the sequence $\phi(\cdot, x_n)$. Observing the first equation of \eqref{E}, one has 
			$$
			\norm{\partial_a\phi(\cdot, x_n)}_{L^1(0, a_2)}\le C, 
			$$
where $C>0$ denotes some constant that may vary from line to line but is independent of $n\ge0.$ It follows that the sequence $\{\phi(\cdot, x_n)\}_{n\ge0}$ is bounded in $W^{1, 1}(0, a_2)$ which is continuously embedded into $L^\infty(0, a_2)$ so that $\norm{\phi(\cdot, x_n)}_{L^\infty(0, a_2)}\le C$. Again by the first equation of \eqref{E}, one has 
			$$
			\norm{\partial_a\phi(\cdot, x_n)}_{L^\infty(0, a_2)}\le C.
			$$
			Thus we have $\norm{\phi(\cdot, x_n)}_{W^{1, \infty}(0, a_2)}\le C$. By the compact Sobolev embedding, we can find a limit, denoted by $\widehat \phi(\cdot)\in C([0, a_2])$, up to a subsequence such that 
			$$
			\phi(\cdot, x_n)\to \widehat \phi(\cdot)\;\text{uniformly on $[0, a_2]$.}
			$$
			Since $x\to\mu(, x)\in C(\overline{\Omega}, L^\infty(0, a_2))$, one has $\mu(\cdot, x_n)\to\mu(\cdot, x_0)$ in $L^\infty(0, a_2)$, and thus 
			$$
			\mu(\cdot, x_n)\phi(\cdot, x_n)\to\mu(\cdot, x_0)\widehat{\phi}(\cdot) \text{ in }L^\infty(0, a_2).
			$$ 
			Applying the same argument to $\beta$ and then passing to the limit on \eqref{E}, one obtains
			\begin{equation}
				\begin{cases}
					\partial_a\widehat \phi(a)=-(D+\mu(a, x_0))\widehat \phi(a)-\alpha(x_0)\widehat \phi(a), & a\in(0, a_2),\\
					\widehat \phi(0)=\int_0^{a_2}\beta(a, x_0)\widehat \phi(a)da
				\end{cases}
			\end{equation}
with $\norm{\widehat \phi}_{L^1(0, a_2)}=1$ and $\widehat \phi\ge0$. Hence $\widehat \phi$ is the principal eigenfunction of the operator $\mathcal{B}_1^{x_0}+\mathcal{C}^{x_0}$ corresponding to $\alpha(x_0)$. Next thanks to the simplicity of the principal eigenvalue, we have $\widehat \phi(a)=\phi(a, x_0)$. Thus the function $x\to\phi(\cdot, x)$ is continuous from $x\in\overline{\Omega}$ to $C([0, a_2])$. Then we normalize $\phi$ such that $\max_{(a, x)\in[0, a_2]\times\overline{\Omega}}\phi(a, x)=1$.
			
\textbf{Second step.} We will prove the main conclusion. According to Assumption \ref{J} on the kernel $J$, there exist $r>0$ and $c_0>0$ such that $J(x-y)>c_0$ for all $x, y\in\overline{\Omega}$ with $|x-y|<r$. 
			
			Next let 
			$$
			c_1=\min_{(a, x)\in[0, a_2]\times\overline{\Omega}}\phi(a, x). 
			$$
			Due to Assumption \ref{beta}, $c_1>0$ holds. Since $(\alpha^{**}-\alpha)^{-1}\notin L^1_{loc}(\overline{\Omega})$, we can choose $\zeta>\alpha^{**}$, some $\delta>0$ and $x_1\in\Omega$ such that $B(x_1, \delta)\subset B(x_1, 2\delta)\subset\Omega$, 
			$$ 
			\int_{B(x_1, \delta)}\frac{1}{\zeta-\alpha(x)}dx\geq2(Dc_0c_1)^{-1}, 
			$$
			and $3\delta<r$, where $B(x, r)$ is the ball centered at $x$ with radius $r$. Let $p: \overline{\Omega}\to\R$ be a continuous function on $\overline{\Omega}$ such that 
			\begin{eqnarray}
				p(x)=
				\begin{cases}
					1,&x\in B(x_1, \delta),\\
					0,&x\in\overline{\Omega}\setminus B(x_1, 2\delta)
				\end{cases}
			\end{eqnarray}
with $0\le p(x)\le1$ for all $x\in\overline{\Omega}$ and $\widetilde{\phi}(a, x)=[\widetilde{\phi}(x)](a):=p(x)\phi(a, x), \forall(a, x)\in[0, a_2]\times\overline{\Omega}$. It then follows that for any $(a, x)\in[0, a_2]\times(\overline{\Omega}\setminus B(x_1, 2\delta))$, we have
			$$ 
			\int_{\Omega}J(x-y)\frac{dy}{\zeta-\alpha(y)}\widetilde{\phi}(a, y)\geq0. 
			$$
			For any $(a, x)\in(0, a_2)\times B(x_1, 2\delta)$, we see that
			\begin{eqnarray}
				\int_{\Omega}J(x-y)\frac{dy}{\zeta-\alpha(y)}\widetilde{\phi}(a, y)\ge\int_{B(x_1, \delta)}J(x-y)\frac{dy}{\zeta-\alpha(y)}\phi(a, y)\ge 2c_0c_1(Dc_0c_1)^{-1}\geq2D^{-1}\widetilde{\phi}(a, x).\nonumber
			\end{eqnarray}
			Note that for all $x\in\overline{\Omega}$, one has
			\begin{eqnarray}
				&&(\zeta I -\mathcal{B}_1-\mathcal{C})^{-1}(0, \widetilde{\phi})=(0, \psi)\nonumber\\
				\text{with}&&(0, \psi(\cdot, x))=\left[(\zeta I -\mathcal{B}_1^x-\mathcal{C}^x)^{-1}(0, \widetilde{\phi}(x))\right](\cdot)=\left[(\zeta-\alpha(x))^{-1}(0, \widetilde{\phi}(x))\right](\cdot).
			\end{eqnarray} 
			Recalling \eqref{Flambdainverse}, it then follows that 
			\begin{eqnarray}
				\mathcal{F}_\zeta(0, \widetilde{\phi})=\mathcal{B}_2(\zeta I -\mathcal{B}_1-\mathcal{C})^{-1}(0, \widetilde{\phi})\geq2(0, \widetilde{\phi})>(0, \widetilde{\phi}).
			\end{eqnarray}  
			Thus, there exists $\zeta>s(\mathcal{B}_1+\mathcal{C})$ such that $r(\mathcal{F}_\zeta)>1$. Then by Corollary \ref{corollary}, it follows that $s(\mathcal{A})>s(\mathcal{B}_1+\mathcal{C})$ which implies the desired result by Theorem \ref{principle}.
\end{proof}	

\begin{remark}\label{GKR}
{\rm Observe that the criterion for the existence of principal eigenvalues that we provided in \eqref{criterion} and \eqref{criteria} are reasonable and comparable with the ones obtained for nonlocal problems, for instance, see Coville \cite{coville2010simple} who employed generalized Krein-Rutman Theorem (see Edmunds et al. \cite{edmunds1972non}, Nussbaum \cite{nussbaum1981eigenvectors}) to obtain analogue conditions for the existence of principal eigenvalues of a nonlocal diffusion operator. In fact in our case, \eqref{criterion} and \eqref{criteria} imply that $s(\mathcal{A})>s(\mathcal{B}_1+\mathcal{C})$. It follows by Remark \ref{sasb}-(i) and \ref{omegaT} that $(\la -s(\mathcal{A}))^{-1}$, the spectral radius of $(\la I -\mathcal{A})^{-1}$,  tends to $\infty$ and $(\la -s(\mathcal{B}_1+\mathcal{C}))^{-1}$, the spectral radius of $(\la I -\mathcal{B}_1-\mathcal{C})^{-1}$, remains bounded as $\la\downarrow s(\mathcal{A})$. On the other hand, since $\mathcal{B}_2$ is a compact perturbator of $\mathcal{B}_1+\mathcal{C}$ (which implies that $\mathcal{F}_\la$ defined in \eqref{Flambdainverse} is compact), it follows that for $\la>s(\mathcal{A})$,
$$
r_e((\la I -\mathcal{A})^{-1})=r_e\left((\la I -\mathcal{B}_1-\mathcal{C})^{-1}\left(\sum_{j=0}^\infty\mathcal{F}_\la^j\right)\right)= r_e((\la I -\mathcal{B}_1-\mathcal{C})^{-1})\le r((\la I -\mathcal{B}_1-\mathcal{C})^{-1}).
$$
Hence $(\la I -\mathcal{A})^{-1}$ is essentially compact if $\la$ is sufficiently close to $s(\mathcal{A})$. Then the generalized Krein-Rutman theorem can be applied to conclude that $(\la -s(\mathcal{A}))^{-1}$ is the principal eigenvalue of $(\la I -\mathcal{A})^{-1}$. It follows that $s(\mathcal{A})$ is the principal eigenvalue of $\mathcal{A}$ by the spectral mapping theorem. The above argument is just the idea of obtaining the existence of principal eigenvalues combining the theory of resolvent positive operators with their perturbations and generalized Krein-Rutman Theorem, see Thieme \cite{thieme1998remarks}.
				
Let us see the essential compactness in another way. Observing again from Remark \ref{omegaT} and Remark \ref{sasb}-(i) that if $X=L^1(\Omega)$, then $\omega(T_{\mathcal{A}_0})=s(\mathcal{A})>s(\mathcal{B}_1+\mathcal{C})=\omega(T_{(\mathcal{B}_1+\mathcal{C})_0})$, it follows together with Corollary \ref{compactperturbator} that $\{T_{\mathcal{A}_0}(t)\}_{t\geq0}$ is an essentially compact semigroup by Theorem \ref{EC} in Appendix.}
\end{remark}

\subsection{Relation Between $\mathcal{M}_\lambda$ and $\mathcal{A}$}
We next give a proposition to characterize the relation between the eigenvalues of $\mathcal{M}_\lambda$ and those of $\mathcal{A}=\mathcal{B}+\mathcal{C}$, also see Kang and Ruan \cite{kang2021nonlinear} or Walker \cite{walker2013some}.
		
\begin{proposition}\label{eigenvalues}
Under Assumption \ref{beta}, let $\lambda\in\mathbb{C}$ and $m\in\mathbb{N}\setminus\{0\}$. Then $\lambda\in\sigma_p(\mathcal{A})$ with geometric multiplicity $m$ if and only if $1\in\sigma_p(\mathcal{M}_\lambda)$ with geometric multiplicity $m$, where $\sigma_p(A)$ denotes the point spectrum of $A$. 
		\end{proposition}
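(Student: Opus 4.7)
The plan is to construct an explicit linear isomorphism
\[
\Phi : \ker(\lambda I - \mathcal{A}) \longrightarrow \ker(I - \mathcal{M}_\lambda), \qquad \Phi(0, \phi) := \phi(0),
\]
between the two eigenspaces. Since the geometric multiplicity of an eigenvalue is, by definition, the (complex) dimension of its eigenspace, any linear bijection automatically yields equality of geometric multiplicities, so the proof reduces to checking that $\Phi$ is well-defined, injective, and surjective with a linear inverse. Assumption \ref{beta} is used implicitly throughout to replace $\hat a$ by $a_2 < \infty$, so that the integrals defining $\mathcal{M}_\lambda$ and the boundary condition of $\mathcal{A}$ are unambiguously finite and the evolution family $\mathcal{U}$ enjoys the required regularity at the endpoints.

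\textbf{Forward direction.} Suppose $(0, \phi) \in \mathrm{dom}(\mathcal{A})$ satisfies $\mathcal{A}(0, \phi) = \lambda(0, \phi)$. Unfolding the definitions of $\mathcal{B}$ and $\mathcal{C}$ and using the resolvent characterization leading to \eqref{(lambda-mathcal{A})^{-1}}, this abstract identity is equivalent (in the appropriate mild sense) to the system
\begin{equation*}
\partial_a \phi(a) = -\lambda \phi(a) + D(K - I)\phi(a) - \mu(a, \cdot)\phi(a), \qquad \phi(0) = \int_0^{\hat a} \beta(a, \cdot)\phi(a)\, da.
\end{equation*}
Integrating the age evolution against the family $\mathcal{U}$ yields the explicit representation $\phi(a) = e^{-\lambda a}\mathcal{U}(0, a)\phi(0)$ for a.e.\ $a$. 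Plugging this into the integral boundary condition and invoking the definition \eqref{Glambda} of $\mathcal{M}_\lambda$ produces $\phi(0) = \mathcal{M}_\lambda \phi(0)$, so $\phi(0) \in \ker(I - \mathcal{M}_\lambda)$. Hence $\Phi$ is well-defined; it is injective because the representation formula recovers $\phi$ entirely from $\phi(0)$.

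\textbf{Reverse direction.} Conversely, given $\eta \in \ker(I - \mathcal{M}_\lambda)$, define $\phi(a) := e^{-\lambda a}\mathcal{U}(0, a)\eta$, which belongs to $L^1((0, \hat a), X)$ by \eqref{EB}. A direct computation using the evolution equation satisfied by $\mathcal{U}$ and the identity $\mathcal{M}_\lambda \eta = \eta$ shows that $(0, \phi) \in \mathrm{dom}(\mathcal{A})$, solves $\mathcal{A}(0, \phi) = \lambda(0, \phi)$, and satisfies $\phi(0) = \eta$. Thus $\Phi$ is surjective with linear inverse $\eta \mapsto (0, e^{-\lambda \cdot}\mathcal{U}(0, \cdot)\eta)$, and so $\Phi$ is a vector-space isomorphism, which forces the two geometric multiplicities to coincide.

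The only real obstacle in executing the above is the careful translation between the abstract operator identity $\mathcal{A}(0, \phi) = \lambda(0, \phi)$ and the concrete age equation with integral initial condition; in particular, one must identify the trace $\phi(0)$ of the (necessarily continuous) representative of $\phi$ as a well-defined element of $X$ belonging to the appropriate domain. Once this translation is carried out, the two formulas above for $\Phi$ and $\Phi^{-1}$ are mutually inverse by direct substitution and the conclusion follows.
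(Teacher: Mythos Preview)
Your proposal is correct and follows essentially the same approach as the paper: both arguments rest on the explicit representation $\phi(a)=e^{-\lambda a}\mathcal U(0,a)\phi(0)$ for eigenfunctions of $\mathcal A$, together with the observation that the integral boundary condition then reads $\phi(0)=\mathcal M_\lambda\phi(0)$. The only cosmetic difference is that the paper tracks $m$ linearly independent eigenvectors explicitly, whereas you phrase the same content as a linear isomorphism $\Phi$ between the two eigenspaces; the underlying computations and the use of unique solvability of the Cauchy problem for injectivity are identical.
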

		
		\begin{proof}
			Let $\lambda\in\mathbb{C}$. Suppose that $\lambda\in\sigma_p(\mathcal{A})$ has geometric multiplicity $m$ so that there are $m$ linearly independent elements
			$$
			\begin{pmatrix}0, \phi_1\end{pmatrix},...,\begin{pmatrix}0, \phi_m\end{pmatrix}\in {\rm dom}(\mathcal{A}) \text{ with } (\lambda I -\mathcal{A})\begin{pmatrix}0, \phi_j\end{pmatrix}=(0, 0) \text{ for } j=1,...,m. 
			$$
			Then by solving the above eigenvalue problem explicitly, we get
			$$ 
			\phi_j(a)=e^{-\lambda a}\mathcal{U}(0, a)\phi_j(0) \text{ with } \phi_j(0)=\mathcal{M}_\lambda\phi_j(0). 
			$$
			Hence, $\phi_1(0),...,\phi_m(0)$ are necessarily linearly independent eigenvectors of $\mathcal{M}_\lambda$ corresponding to the eigenvalue $1$. 
			
			Now suppose that $1\in\sigma_p(\mathcal{M}_\lambda)$ has geometric multiplicity $m$ so that there are linearly independent $\psi_1,...,\psi_m\in X$ with $\mathcal{M}_\lambda\psi_j=\psi_j$ for $j=1,...,m$. Put $(0, \phi_j)=\begin{pmatrix}0, e^{-\lambda a}\mathcal{U}(0, a)\psi_j\end{pmatrix}\in \mathcal{X}_0$ and note that for $j=1,...,m$, we have
			$$ 
			\partial_a\phi_j+\lambda\phi_j-D[K-I]\phi_j+\mu\phi_j=0,\; \int_{0}^{a_2}\beta(a, \cdot)\phi_j(a)da=\mathcal{M}_\lambda\psi_j=\psi_j=\phi_j(0), 
			$$
			which is equivalent to 
			$$ 
			\mathcal{A}(0, \phi_j)=\lambda(0, \phi_j)\text{ and } (0, \phi_j)\in {\rm dom}(\mathcal{A}). 
			$$
			Thus $\lambda\in\sigma_p(\mathcal{A})$. If $\alpha_1,...,\alpha_m$ are any scalars, the unique solvability of the Cauchy problem 
			$$ 
			\partial_a\phi+\lambda\phi-D[K-I]\phi+\mu\phi=0,\; \phi(0, x)=\sum_{j=1}^{m}\alpha_j\psi_j 
			$$
			ensures that $(0, \phi_1),...,(0, \phi_m)$ are linearly independent. Hence, the result follows.   
		\end{proof}
		
\subsection{A Counterexample}
In this subsection we construct an example of kernel $J$ and functions $\beta(a, x), \mu(a, x)$ for which the operator $\mathcal{A}$ admits no eigenvalue with a positive eigenfunction in ${\rm dom}(\mathcal{A})$ when \eqref{criterion} is not satisfied. In particular, $\mathcal{A}$ admits no principal eigenvalue. This implies that our criterion is sharp in the sense that if they are not satisfied, $\mathcal{A}$ may not have a principal eigenvalue. 
		
Let $\beta(a, x)\equiv\beta(x), \mu(a, x)\equiv\mu$ and the maximum age $\hat{a}=\infty$, where $\beta\in C(\overline{\Omega})$ and $\mu>0$ obviously satisfy the assumptions in the Introduction. Let us suppose that $\mathcal{A}$ admits an eigenvalue of $\lambda_1$ with a positive eigenfunction $(0, \phi)\in {\rm dom}(\mathcal{A})$; that is,
		\begin{equation*}
			\begin{cases}
				\partial_a\phi(a, x)=\int_{\Omega}J(x-y)\phi(a, y)dy-\phi(a, x)-\mu\phi(a, x)-\lambda_1\phi(a, x),&(a, x)\in(0, \hat{a})\times\overline{\Omega},\\
				\phi(0, x)=\int_{0}^{\infty}\beta(x)\phi(a, x)da,& x\in\overline{\Omega}.
			\end{cases}
		\end{equation*}
		Integrating the above equation from $0$ to $\hat{a}=\infty$ and using the condition $\phi(\infty, x)\equiv0$, we obtain
		$$ 
		-\phi(0, x)=\int_{\Omega}J(x-y)\int_{0}^{\infty}\phi(a, y)dady-\int_{0}^{\infty}\phi(a, x)da-\mu\int_{0}^{\infty}\phi(a, x)da-\lambda_1\int_{0}^{\infty}\phi(a, x)da. 
		$$
		Now denote $\psi(x)=\int_{0}^{\infty}\phi(a, x)da$, we then have
		$$ 
		\int_{\Omega}J(x-y)\psi(y)dy-\psi(x)+(\beta(x)-\mu)\psi(x)-\lambda_1\psi(x)=0.$$
		Thus by Coville's criterion \cite[Theorem 5.1]{coville2010simple}, we have the following theorem.
		
\begin{theorem}
Let $J\equiv\rho$ on $\Omega$, where $\rho>0$ is a constant, and set $\beta_{\max}=\max_{x\in\overline{\Omega}}\beta(x)$. If 
$$
\rho\int_{\Omega}\frac{1}{\beta_{\max}-\beta(x)}dx<1,
$$ 
then $\mathcal{A}$ admits no eigenvalue with a positive eigenfunction in ${\rm dom}(\mathcal{A})$. In particular, $\mathcal{A}$ admits no principal eigenvalue.
\end{theorem}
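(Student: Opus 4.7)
The strategy is to reduce the putative eigenvalue equation to an elementary scalar identity and then contradict the hypothesis, essentially following Coville's criterion \cite[Theorem 5.1]{coville2010simple}. Suppose for contradiction that $(0,\phi) \in {\rm dom}(\mathcal{A})$ is a positive eigenfunction with eigenvalue $\lambda_1 \in \R$.

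First, I would carefully justify the integrated equation
$$\int_\Omega J(x-y)\psi(y)\,dy - \psi(x) + (\beta(x)-\mu-\lambda_1)\psi(x) = 0$$
already derived in the excerpt for $\psi(x) := \int_0^\infty \phi(a,x)\,da$. Since $(0,\phi)\in\mathcal X_0$ we have $\phi \in L^1((0,\infty), X)$, so Fubini yields $\psi \in L^1(\Omega) \cap X_+$, and integrating the age equation from $0$ to $\infty$ produces the boundary term $-\phi(0,x)$ because $\phi(a,\cdot)\to 0$ as $a\to\infty$ (a consequence of the exponential damping $e^{-\mu a}$ built into the characteristics of $\mathcal A$). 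Using $J\equiv \rho$ on $\Omega$, the convolution collapses to the scalar $\rho M$ with $M := \int_\Omega \psi(y)\,dy>0$, and the equation rearranges to the explicit pointwise formula
$$\psi(x) = \frac{\rho M}{1 + \mu + \lambda_1 - \beta(x)}, \qquad x \in \overline{\Omega}.$$

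Next, I would exploit positivity to pin down the location of $\lambda_1$. Since $\psi>0$ and $\rho M>0$, the denominator must stay strictly positive on $\overline\Omega$; otherwise $\psi$ would blow up at the maximiser of $\beta$ and fail to lie in $L^1_+(\Omega)$. Hence $\lambda_1 > \beta_{\max}-1-\mu$ strictly. Integrating the pointwise formula over $\Omega$ and dividing by $M$ then yields the self-consistency identity
$$1 = \rho \int_\Omega \frac{dx}{1 + \mu + \lambda_1 - \beta(x)}.$$

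Finally, the contradiction is a one-line inequality: since $1+\mu+\lambda_1 > \beta_{\max}$, one has $1+\mu+\lambda_1-\beta(x) > \beta_{\max}-\beta(x)>0$ pointwise, so the integrand is strictly dominated by $(\beta_{\max}-\beta(x))^{-1}$, giving
$$1 = \rho \int_\Omega \frac{dx}{1 + \mu + \lambda_1 - \beta(x)} < \rho \int_\Omega \frac{dx}{\beta_{\max} - \beta(x)} < 1$$
by hypothesis, which is absurd. This rules out any real eigenvalue of $\mathcal{A}$ with a positive eigenfunction in ${\rm dom}(\mathcal A)$; since any principal eigenvalue is by definition of this type, $\mathcal{A}$ admits no principal eigenvalue either.

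The main obstacle is really the bookkeeping surrounding the reduction to $\psi$: checking that $\phi(a,\cdot)\to 0$ as $a\to\infty$ so the integration producing the boundary term $-\phi(0,x)$ is rigorous, verifying $\psi\in L^1_+(\Omega)$ with $M>0$, and cleanly dispatching the degenerate case $\lambda_1\le \beta_{\max}-1-\mu$ (where the explicit formula would force $\psi$ to be unbounded or to change sign). Once these technicalities are handled, the heart of the argument is the single strict inequality above.
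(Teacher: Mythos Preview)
Your proposal is correct and follows the same route as the paper: reduce to the scalar nonlocal equation for $\psi(x)=\int_0^\infty\phi(a,x)\,da$ and then apply Coville's criterion \cite[Theorem 5.1]{coville2010simple}. The paper simply cites Coville at that point, whereas you spell out the underlying contradiction argument; the content is the same. One small remark: your phrasing ``$\psi$ would blow up at the maximiser of $\beta$ and fail to lie in $L^1_+(\Omega)$'' is not the cleanest way to exclude $\lambda_1\le\beta_{\max}-1-\mu$, since a singularity at a single point need not destroy integrability; the pointwise identity $\psi(x)(1+\mu+\lambda_1-\beta(x))=\rho M>0$ evaluated at the maximiser of $\beta$ directly forces either $\rho M=0$ or $\psi(x_0)<0$, which is the sign argument you correctly mention later.
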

		
\begin{remark}
			{\rm In this example, the function $G_\alpha$ reads as
				$$ 
				G_\alpha(x)=\beta(x)\int_{0}^{\infty}e^{-(\alpha+1+\mu)a}da\;\text{ for } \alpha>-\mu-1. 
				$$ 
				Note that the function 
				$$ 
				\alpha\rightarrow\int_{0}^{\infty}e^{-(\alpha+1+\mu)a}da 
				$$ 
				is continuously decreasing from $\infty$ to $0$ on $(-\mu-1, \infty)$. Thus, we can choose $\alpha^{**}>-\mu-1$ such that 
				$$ 
				\int_{0}^{\infty}e^{-(\alpha^{**}+1+\mu)a}da=1/\beta_{\max}. 
				$$ 
				Now for any $\alpha<\alpha^{**}$,  
				$$ \rho/\beta_{\max}\int_{\Omega}\frac{1}{1-G_\alpha(x)}dx<1\Rightarrow\rho\int_{\Omega}\frac{1}{\beta_{\max}-\beta(x)}dx<1. 
				$$
Hence, the criterion for the existence of principal eigenvalues that we gave in \eqref{criterion} is reasonable and comparable with the one for nonlocal problems, see Coville \cite{coville2010simple} and Shen and Vo \cite{shen2019nonlocal}.}
		\end{remark}

\section{Limiting Properties}\label{LP}
In this section we will study the effects of the diffusion rate on the spectral bound $s(\mathcal{A})$ of $\mathcal{A}$. Remembering in the previous section, we have shown that under Assumption \ref{beta}, the eigenvalue problem to $\mathcal{A}$ on $[0, \hat{a})$ is equivalent to the one on $[0, a_2]$ with bounded death rate $\mu$ and further the principal eigenfunction associated with $s(\mathcal{A})$ is positive in $[0, a_2]$. 
		
		Thus in the following context, we will let Assumption \ref{beta} hold throughout the whole section. Before proceeding, let us first clarify the strict positivity in $X$. 
		\begin{eqnarray}
		&&f>0 \text{ in $X=C(\overline{\Omega})$ means that }f(x)>0 \text{ for all $x\in\overline{\Omega}$},\nonumber\\
		&&f>0 \text{ in $X=L^1(\Omega)$ means that $\int_{\Omega}f^*(x)f(x)dx>0$ for any $f^*\in L_+^\infty(\Omega)\setminus\{0\}$}. \nonumber
	    \end{eqnarray}
        Now following Berestycki et al. \cite{berestycki1994principal,berestycki2016definition}, we introduce the following definition. 
		\begin{definition}\label{GPE}
			{\rm Define the \textit{generalized principal eigenvalues} by
				\begin{eqnarray}
					\begin{cases}
						\lambda_p(\mathcal{A}):=\sup\{\lambda\in\mathbb{R}: \exists\, \phi\in W^{1, 1}((0, a_2), X) \text{ s.t. } \phi>0 \text{ and }(-\mathcal{A}+\lambda)(0, \phi)\leq(0, 0) \text{ in } [0, a_2]\},\\
						\lambda_p'(\mathcal{A}):=\inf\{\lambda\in\mathbb{R}: \exists\, \phi\in W^{1, 1}((0, a_2), X) \text{ s.t. } \phi>0 \text{ and }(-\mathcal{A}+\lambda)(0, \phi)\geq(0, 0) \text{ in } [0, a_2]\}.
					\end{cases}
				\end{eqnarray}
			}
		\end{definition} 
		
		
		Note that the sets in Definition \ref{GPE} are nonempty, see the proof of Theorem \ref{Dlambda} in the following. As mentioned before, such ideas are widely used to prove the existence and asymptotic behavior of principal eigenvalues with respect to diffusion rate, see Coville \cite{coville2010simple},  Li et al. \cite{li2017eigenvalue} and Su et al. \cite{su2020asymptotic} for nonlocal diffusion equations, Shen and Vo \cite{shen2019nonlocal} and Su et al. \cite{su2020generalised} for time periodic nonlocal diffusion equations. As Shen and Vo \cite{shen2019nonlocal} highlighted for the time periodic case, we remark that our parabolic-type operator $\mathcal{A}$ containing $\partial_a$ is not self-adjoint, and thus we lack the usual $L^2(\Omega)$ variational formula for the principal eigenvalue $s(\mathcal{A})$. The generalized principal eigenvalues $\lambda_p(\mathcal{A})$ and  $\lambda_p'(\mathcal{A})$ defined in (\ref{GPE}) remedy the situation and play crucial roles in the following text. 
		
		\subsection{Without Kernel Scaling}
		In this subsection first we study the diffusion without kernel scaling and have the following result.
		
\begin{proposition}\label{principleequal}
Let Assumption \ref{beta} hold and in addition, assume that $\lambda_1(\mathcal{A})$ is the eigenvalue of $\mathcal{A}$ associated with $(0, \phi_1)\in {\rm dom}(\mathcal{A})$ with $\phi_1>0$ in $[0, a_2]$, then one has $\lambda_1(\mathcal{A})=\lambda_p(\mathcal{A})=\lambda_p'(\mathcal{A})$.
\end{proposition}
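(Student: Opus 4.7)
The chain $\lambda_p'(\mathcal{A}) \le \lambda_1(\mathcal{A}) \le \lambda_p(\mathcal{A})$ is immediate from Definition \ref{GPE}: since $(-\mathcal{A}+\lambda_1)(0,\phi_1) = (0,0)$, the positive eigenfunction $\phi_1$ qualifies as a test function in both defining sets. Moreover, since $\mathcal{A}$ is resolvent positive (Proposition \ref{sA}) and $\phi_1 > 0$ is an eigenfunction corresponding to $\lambda_1$, the standard theory of positive operators used throughout the paper identifies $\lambda_1(\mathcal{A}) = s(\mathcal{A})$. It therefore remains to prove the two reverse inequalities $\lambda_p(\mathcal{A}) \le s(\mathcal{A})$ and $\lambda_p'(\mathcal{A}) \ge s(\mathcal{A})$.

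For the upper bound, I would take any admissible $\lambda$ with some $\phi > 0$ satisfying $\mathcal{A}(0,\phi) \ge \lambda(0,\phi)$ and fix $z > \max\{\lambda, s(\mathcal{A})\}$. Applying the positive resolvent $(zI-\mathcal{A})^{-1}$ to the rearrangement $(zI-\mathcal{A})(0,\phi) \le (z-\lambda)(0,\phi)$ yields $(zI-\mathcal{A})^{-1}(0,\phi) \ge (z-\lambda)^{-1}(0,\phi)$. Iterating this inequality and using the monotonicity of the norm on the normal cone $\mathcal{X}^+$ gives $\|(zI-\mathcal{A})^{-n}\| \ge C(z-\lambda)^{-n}$ for some $C>0$; Gelfand's formula together with the identity $r((zI-\mathcal{A})^{-1}) = (z-s(\mathcal{A}))^{-1}$ for resolvent positive operators (Remark \ref{sasb}, Remark \ref{omegaT}) then forces $\lambda \le s(\mathcal{A})$. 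Taking the supremum over admissible $\lambda$ yields $\lambda_p(\mathcal{A}) \le s(\mathcal{A}) = \lambda_1(\mathcal{A})$.

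For the lower bound $\lambda_p'(\mathcal{A}) \ge s(\mathcal{A})$ the symmetric positive-operator argument is not directly reversible, because an upper estimate on $(zI-\mathcal{A})^{-1}(0,\phi)$ does not control the spectral radius from above. I plan to argue by a touching/strong-maximum-principle method: take any admissible $\lambda$ with some $\phi > 0$ satisfying $\mathcal{A}(0,\phi) \le \lambda(0,\phi)$, and set $t^* := \sup\{t \ge 0 : t\phi_1 \le \phi\}$. Since both $\phi_1$ and $\phi$ are strictly positive in the sense recalled at the start of Section \ref{LP} and the problem has been confined to the compact age interval $[0,a_2]$ under Assumption \ref{beta}, continuity/compactness makes $t^*$ finite and positive. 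The difference $\psi := \phi - t^*\phi_1 \ge 0$ then satisfies
\[
(-\mathcal{A}+\lambda)\psi \ge (\lambda_1-\lambda)\,t^*\,\phi_1 .
\]
If $\lambda < \lambda_1$, the right-hand side is strictly positive, and the strong maximum principle established in Section \ref{Strong Maximum Principle} forces $\psi > 0$ strictly, contradicting the touching property selected by the supremum defining $t^*$. Hence $\lambda \ge \lambda_1 = s(\mathcal{A})$, and taking the infimum over admissible $\lambda$ closes the chain.

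The main obstacle will be the last step: verifying that the touching configuration lies where the strong maximum principle is applicable, which is delicate because of the nonlocal age coupling at $a=0$ through the birth term $\int_0^{a_2}\beta(a,\cdot)\phi(a,\cdot)\,da$. A cleaner but heavier alternative would be a duality argument: pairing $\mathcal{A}(0,\phi)\le\lambda(0,\phi)$ against a strictly positive eigenfunctional $\phi_1^* > 0$ of the adjoint $\mathcal{A}^*$ with eigenvalue $s(\mathcal{A})$ gives $s(\mathcal{A})\langle(0,\phi),\phi_1^*\rangle \le \lambda\langle(0,\phi),\phi_1^*\rangle$, and hence $\lambda \ge s(\mathcal{A})$; the cost is a separate construction of $\phi_1^*$ by applying the principal-eigenvalue machinery of Section \ref{Principal Spectral Theory} to the adjoint.
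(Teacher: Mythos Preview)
Your route is genuinely different from the paper's. The paper never touches the resolvent or the strong maximum principle here: it passes through the family $\mathcal{M}_\lambda$ of Proposition~\ref{sA}. A test function $\phi$ for $\lambda_p$ gives, after solving the differential inequality, $\mathcal{M}_\lambda\phi(0)\ge\phi(0)$ and hence $r(\mathcal{M}_\lambda)\ge 1$; the symmetric calculation for $\lambda_p'$ gives $r(\mathcal{M}_\lambda)\le 1$. Since $\lambda\mapsto r(\mathcal{M}_\lambda)$ is decreasing and (via Proposition~\ref{eigenvalues}) $r(\mathcal{M}_{\lambda_1})=1$, both inequalities collapse to $\lambda_1$. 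This handles the two directions symmetrically with one tool. Your resolvent--Gelfand argument for $\lambda_p\le s(\mathcal{A})$ is a valid and pleasant alternative that uses only resolvent positivity and Corollary~\ref{spr}, without the $\mathcal{M}_\lambda$ structure.

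The substantive gap is in the other half. You invoke ``the strong maximum principle established in Section~\ref{Strong Maximum Principle}'' to force $\psi=\phi-t^*\phi_1>0$, but Theorem~\ref{MP} is conditional: it says the SMP for $\mathcal{A}$ holds \emph{iff} $\lambda_1(\mathcal{A})<0$. For the shifted operator $\mathcal{A}-\lambda I$ the condition becomes $\lambda_1-\lambda<0$, i.e.\ $\lambda>\lambda_1$, which is exactly the opposite of your contradiction hypothesis $\lambda<\lambda_1$. So Theorem~\ref{MP} cannot be cited as a black box here. The step is salvageable, because in your inequality $(-\mathcal{A}+\lambda)(0,\psi)\ge (\lambda_1-\lambda)t^*(0,\phi_1)$ the strictly positive right-hand side can play the same role that $-\epsilon_0\lambda_1(\mathcal{A})\phi>0$ plays in the proof of Theorem~\ref{MP}; you would then have to rerun the propagation-of-positivity computation (the ``$\epsilon_0=0$'' and ``$\epsilon_0>0$'' cases there) rather than quote the theorem. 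Note also a mismatch between your two halves: the resolvent argument gives $\lambda_p\le s(\mathcal{A})$ while the touching argument gives $\lambda_p'\ge\lambda_1$, so closing the chain still needs $\lambda_1=s(\mathcal{A})$, which you treated lightly. Your duality alternative would give $\lambda_p'\ge s(\mathcal{A})$ directly and close everything (yielding $\lambda_1=s(\mathcal{A})$ as a byproduct), at the cost of producing a strictly positive $\phi_1^*$; a cheaper fix is to run the touching argument symmetrically (with $\phi_1-s^*\phi$) to get $\lambda_p\le\lambda_1$ as well, avoiding $s(\mathcal{A})$ altogether.
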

		
\begin{proof}
Denote $\lambda_1(\mathcal{A})$ by $\lambda_1$. First, we prove that $\lambda_1=\lambda_p$. Since $\lambda_1$ is the eigenvalue of $\mathcal{A}$ associated with $(0, \phi_1)\in {\rm dom}(\mathcal{A})$ with $\phi_1>0$ in $[0, a_2]$; that is,
			\begin{eqnarray}\label{phi1}
				\mathcal{A}(0, \phi_1)-\lambda_1(0, \phi_1)=(0, 0) \; \text{ in } [0, a_2],
			\end{eqnarray} 
			and since $\phi_1>0$ in $[0, a_2]$, we have $\lambda_1\leq\lambda_p$. Suppose by contradiction that $\lambda_1<\lambda_p$. From the definition of $\lambda_p$, there are $\lambda\in(\lambda_1, \lambda_p)$ and $(0, \phi)\in {\rm dom}(\mathcal{A})$ with $\phi>0$ in $[0, a_2]$ such that
			\begin{eqnarray}
				-\mathcal{A}(0, \phi)+\lambda(0, \phi)\leq(0, 0) \;\text{ in } [0, a_2]; \nonumber
			\end{eqnarray}
			that is, for $0\le a\le a_2$
			\begin{eqnarray}\label{phi}
				\begin{cases}
					\partial_a\phi(a)-D[K-I]\phi(a)+\mu(a, \cdot)\phi+\lambda\phi\leq0,\\
					\phi(0)-\int_{0}^{a_2}\beta(a, \cdot)\phi(a)da\leq0.
				\end{cases}
			\end{eqnarray}
			Now solving the first inequality in \eqref{phi}, we obtain
			$$ 
			\phi(a)\leq e^{-\lambda a}\mathcal{U}(0, a)\phi(0). 
			$$ 
			Plugging it into the second inequality in \eqref{phi}, we have
			\begin{eqnarray}\label{leq}
				\phi(0)\leq\int_{0}^{a_2}\beta(a, \cdot)e^{-\lambda a}\mathcal{U}(0, a)\phi(0)da.
			\end{eqnarray} 
			It follows that $\mathcal{M}_\lambda\phi(0)\geq\phi(0)$, which implies that $r(\mathcal{M}_\lambda)\geq1$. But we know that $\lambda_1$ is the eigenvalue of $\mathcal{A}$, then by Proposition \ref{eigenvalues} we have $r(\mathcal{M}_{\lambda_1})=1$. Since $\lambda\rightarrow r(\mathcal{M}_\lambda)$ is decreasing by the arguments in Proposition \ref{sA}, one has $\lambda_1\geq\lambda$. This contradiction leads to $\lambda_1=\lambda_p$.
			
			Next, we prove $\lambda_1=\lambda_p'$. Obviously, $\lambda_1\geq\lambda_p'$. Assume by contradiction that $\lambda_1>\lambda_p'$. Then there are $\tilde{\lambda}\in(\lambda_p', \lambda_1)$ and $(0, \tilde{\phi})\in {\rm dom}(\mathcal{A})$ with $\tilde{\phi}>0$ in $[0, a_2]$ such that $-\mathcal{A}(0, \tilde{\phi})+\tilde{\lambda}(0, \tilde{\phi})\geq(0, 0)$. By reversing the above inequalities, we have the desired conclusion via a similar argument as above.
		\end{proof}
		
		Now we give the main theorem in this section about the effects of diffusion rate on $s(\mathcal{A})$. In the next result, we write $s^D(\mathcal{A})$ for $s(\mathcal{A})$ to highlight the dependence on $D$.
		
\begin{theorem}\label{Dlambda}
Let Assumption \ref{beta} hold and, in addition, assume that $s^D(\mathcal{A})$ is the principal eigenvalue of $\mathcal{A}$, then the function $D\rightarrow s^D(\mathcal{A})$ is continuous on $(0, \infty)$ and satisfies 
\begin{equation}
s^D(\mathcal{A})\rightarrow
\begin{cases}
s(B_1+\mathcal{C}) &\;\text{ as } D\rightarrow 0^+,\\
-\infty &\;\text{ as } D\rightarrow\infty,
\end{cases}
\end{equation}
where $B_1$ is defined as follows,
$$ 
B_1(0, f):=\left(-f(0, \cdot), \;-\partial_af-\mu f\right), \; f\in W^{1, 1}((0, a_2), X). 
$$
\end{theorem}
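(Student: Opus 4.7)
The unifying tool is the characterization from Proposition \ref{sA}: for each $D>0$, $s^D(\mathcal{A})$ is the unique real number $\lambda_0(D)$ such that $r(\mathcal{M}_{\lambda_0(D)}^D)=1$, where I write $\mathcal{M}_\lambda^D$ and $\mathcal{U}^D(0,a)$ to emphasize the $D$-dependence. I will establish the three statements by tracking the behavior of $\mathcal{M}_\lambda^D$ in $D$.

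For continuity, I would first show that $(D,a)\mapsto \mathcal{U}^D(0,a)\in\mathcal L(X)$ is continuous in operator norm. Differentiating \eqref{U} in $D$ and applying Gronwall yields locally uniform-in-$a$ norm continuity on $[0,a_2]$; combined with the uniform bound $\|\mathcal{U}^D(0,a)\|\leq e^{-\widetilde{\mu}a}$ and dominated convergence in \eqref{Glambda}, this gives that $(D,\lambda)\mapsto \mathcal{M}_\lambda^D$ is jointly continuous in $\mathcal L(X)$. Since the spectral radius is upper semicontinuous in the operator norm and $\lambda\mapsto r(\mathcal{M}_\lambda^D)$ is strictly decreasing and log-convex near $\lambda_0(D)$ by Claim \ref{convex}, a standard implicit-function argument (bracketing $r(\mathcal{M}_{\lambda_0(D)\pm\varepsilon}^{D'})$ above and below $1$ for $D'$ near $D$) implies $D\mapsto \lambda_0(D)=s^D(\mathcal{A})$ is continuous on $(0,\infty)$.

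For $D\to 0^+$, note that by the variation-of-constants formula and Gronwall, $\mathcal{U}^D(0,a)\eta\to \Pi(0,a)\eta$ in $X$ uniformly on $[0,a_2]$ as $D\to 0$, so $\mathcal{M}_\lambda^D$ converges in norm to the multiplication operator $\mathcal{M}_\lambda^0\colon\eta\mapsto G_\lambda^0(\cdot)\eta(\cdot)$ with $G_\lambda^0(x):=\int_0^{a_2}\beta(a,x)e^{-\lambda a}\pi(0,a,x)da$. Hence $r(\mathcal{M}_\lambda^0)=\max_{x\in\overline\Omega}G_\lambda^0(x)$, and the limit equation $r(\mathcal{M}_\lambda^0)=1$ is exactly the spectral-bound equation for $B_1+\mathcal{C}$ (compare with Proposition \ref{rGalpha} taken at $D=0$, since $B_1=\widetilde B_1$). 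Continuity and strict monotonicity of $r(\mathcal{M}_\lambda^D)$ in both arguments then force $\lambda_0(D)\to s(B_1+\mathcal{C})$; alternatively, one can sandwich $s^D(\mathcal{A})$ between $s(\mathcal{B}_1+\mathcal{C})=s(B_1+\mathcal{C})-D$ (via Remark \ref{sasb}(ii)) and an explicit upper barrier obtained by testing with the principal eigenfunction of $\mathcal{M}_\lambda^0$.

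For $D\to\infty$, the strategy is to show $\|\mathcal{M}_\lambda^D\|\to 0$ for every fixed $\lambda\in\mathbb{R}$, which forces $\lambda_0(D)\to-\infty$. Since $\mu(a,x)\geq\widetilde\mu$, the comparison principle applied to \eqref{steadystatequation} gives the pointwise bound $\mathcal{U}^D(0,a)\eta\leq e^{-\widetilde\mu a}\,e^{D(K-I)a}\eta$ for $\eta\in X_+$. The decisive ingredient is that the nonlocal Dirichlet kernel satisfies $r(K)=1-\lambda^0<1$ with $\lambda^0>0$ the principal eigenvalue of $-K+I$ from Garc\'ia-Meli\'an and Rossi; Gelfand's formula then yields, for any $\epsilon\in(0,\lambda^0)$, a constant $C_\epsilon>0$ with $\|e^{D(K-I)a}\|_{\mathcal L(X)}\leq C_\epsilon e^{-D(\lambda^0-\epsilon)a}$. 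Combined with $\|\beta\|_\infty<\infty$ (Assumption \ref{Ass1.1}(i)) and the finiteness of $a_2$ (Assumption \ref{beta}), this gives $\|\mathcal{M}_\lambda^D\|\leq C_\epsilon\|\beta\|_\infty\int_0^{a_2}e^{-(\lambda+\widetilde\mu+D(\lambda^0-\epsilon))a}da\to 0$ as $D\to\infty$, completing the proof.

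\textbf{Main obstacle.} The trickiest step is the operator-norm estimate $\|e^{D(K-I)a}\|\leq C_\epsilon e^{-D(\lambda^0-\epsilon)a}$ \emph{uniformly} in $a\in[0,a_2]$ in the non-Hilbert spaces $X=C(\overline\Omega)$ and $X=L^1(\Omega)$, since Gelfand's formula is only asymptotic in the power $n$. A cleaner alternative is to argue with the principal eigenpair $(\lambda^0,\phi_0)$ of $-K+I$ directly: one shows $e^{D(K-I)a}\phi_0=e^{-D\lambda^0 a}\phi_0$, and then exploits that $\phi_0$ is bounded above and below by positive constants on $\overline\Omega$ (or that every nonnegative $\eta$ is dominated by a multiple of $\phi_0$) to transfer the exact decay rate to the full operator norm, losing at worst the ratio $\max\phi_0/\min\phi_0$ as a prefactor.
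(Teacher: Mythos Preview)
Your approach via the characterization $r(\mathcal{M}_\lambda^D)=1$ is viable and genuinely different from the paper's, which works entirely through the generalized principal eigenvalue $\lambda_p'(\mathcal{A})$ of Definition \ref{GPE}: for both limits the paper constructs explicit test functions (the pointwise age-eigenfunction $\phi(a,x)$ of \eqref{supsolution} for $D\to 0^+$, and the separable product $\Psi^0(x)\Psi^1(a)$ of the nonlocal and age principal eigenfunctions for $D\to\infty$), then reads off bounds on $s^D(\mathcal{A})=\lambda_p'(\mathcal{A})$ via Proposition \ref{principleequal}. Your route replaces these constructions by operator-norm estimates on $\mathcal{M}_\lambda^D$, which is more systematic but needs finer control of the spectral radius.

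Two points need tightening. First, your continuity argument is incomplete: upper semicontinuity of $r(\cdot)$ under norm convergence yields $r(\mathcal{M}_{\lambda_0(D)+\varepsilon}^{D'})<1$ for $D'$ near $D$, but \emph{not} the companion bound $r(\mathcal{M}_{\lambda_0(D)-\varepsilon}^{D'})>1$, since the spectral radius is not lower semicontinuous in general. The paper sidesteps this by invoking Kato's perturbation theory directly on $\mathcal{A}$ (an isolated pole of finite multiplicity). Within your framework, a fix is to use that $1=r(\mathcal{M}_{\lambda_0(D)}^D)$ is an \emph{eigenvalue} (Proposition \ref{eigenvalues}) with strictly positive eigenvector $\psi_D=\phi^D(0,\cdot)$; then $\mathcal{M}_{\lambda_0(D)-\varepsilon}^{D'}\psi_D\geq\psi_D$ pointwise for $D'$ close to $D$, using $\min_{\overline\Omega}\psi_D>0$, which forces $r\geq 1$. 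Second, your alternative upper barrier for $D\to 0^+$ (``testing with the principal eigenfunction of $\mathcal{M}_\lambda^0$'') does not work as stated: $\mathcal{M}_\lambda^0$ is a multiplication operator, whose spectral radius is generically not attained by an eigenvector. Your primary route through $r(\mathcal{M}_\lambda^D)\to r(\mathcal{M}_\lambda^0)$ is fine once you supply the lower bound via the positive-operator comparison $\mathcal{M}_\lambda^D\geq\mathcal{G}_\lambda^D$, whose spectral radius $\max_x G_\lambda^D(x)$ is explicit.

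Your ``main obstacle'' for $D\to\infty$ dissolves: since $K-I$ is bounded, $t\mapsto e^{t(K-I)}$ is a uniformly continuous semigroup, so its growth bound equals $s(K-I)=-\lambda^0$, and $\|e^{t(K-I)}\|\leq C_\epsilon e^{-(\lambda^0-\epsilon)t}$ for all $t\geq 0$ follows from the definition of growth bound. Your eigenvector alternative also works and even gives the sharp rate with prefactor $\max\phi_0/\min\phi_0$; this is essentially the paper's construction $\Psi=\Psi^0\Psi^1$ viewed through the $\mathcal{M}_\lambda$ lens.
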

		
\begin{proof}
Since $s^D(\mathcal{A})$ is a simple eigenvalue, the continuity of $D\rightarrow s^D(\mathcal{A})$ follows from the similar argument in Theorem \ref{II} or see Kato \cite[Section IV. 3.5]{kato2013perturbation} for the classical perturbation theory. 
			
			For the limits, we first claim that for every $\epsilon>0$, there exists $D_\epsilon>0$ such that
			\begin{eqnarray}\label{Depsilon}
				s^D(\mathcal{A})\leq s(B_1+\mathcal{C})+\epsilon, \;\forall D\in(0, D_\epsilon).
			\end{eqnarray}	
			Denote $\vartheta=s(B_1+\mathcal{C})$. Consider equation \eqref{E} with $D=0$ which is written as follows:
			\begin{equation}\label{supsolution}
				\begin{cases}
					\partial_a\phi(a, x)=-(\alpha(x)+\mu(a, x))\phi(a, x),&\;(a, x)\in(0, a_2]\times\overline{\Omega},\\
					\phi(0, x)=\int_{0}^{a_2}\beta(a, x)\phi(a, x)da,&\; x\in\overline{\Omega}.
				\end{cases}
			\end{equation}
			By Proposition \ref{Galphax} ($D=0$), we know that for each $x\in\overline{\Omega}$, \eqref{supsolution} has a positive solution $\phi\in W^{1, 1}(0, a_2)$ given by
			$$
			\phi(a, x)=e^{-\alpha(x)a}\pi(0, a, x)\phi(0, x).
			$$ 
			Moreover, by the argument in Theorem \ref{II}, $\phi(\cdot, x)$ is continuous in $x\in\overline{\Omega}$ and thus {$\phi\in C(\overline{\Omega}, W^{1, 1}(0, a_2))$. Next from the first equation of \eqref{supsolution}, one has $\phi\in W^{1, 1}((0, a_2), C(\overline{\Omega}))$. Thus we now have $\phi>0$ in $[0, a_2]\times\overline{\Omega}$ with $(0, \phi)\in {\rm dom}(\mathcal{A})=\{0\}\times W^{1, 1}((0, a_2), X)$.} Further, it is easy to check that for $(a, x)\in[0, a_2]\times\overline{\Omega}$
			\begin{eqnarray}
				&& \left[-\mathcal{A}(0, \phi)+(\vartheta+\epsilon)(0, \phi)\right](a, x) \nonumber\\
				&&\quad\quad = \left( \phi(0, x) - \int_{0}^{a_2} \beta(a, x)\phi(a, x)da, \right.\nonumber\\
				&&\quad\quad\quad \left. \partial_a\phi(a, x)-D\left[\int_{\Omega} J(x-y)\phi(a, y)dy-\phi(a, x)\right]+\mu(a, x)\phi+(\vartheta+\epsilon)\phi \right). \nonumber
			\end{eqnarray} 
			Since $\min_{(a, x)\in[0, a_2]\times\overline{\Omega}}\phi(a, x)>0$ and $\max_{(a, x)\in[0, a_2]\times\overline{\Omega}}\phi(a, x)<\infty$, it is straightforward to check that for each $\epsilon>0$, there exists $D_\epsilon>0$ such that for each $D\in(0, D_\epsilon)$, there holds
			\begin{eqnarray}
				&&\partial_a\phi(a, x)-D\left[\int_{\Omega}J(x-y)\phi(a, y)dy-\phi(a, x)\right]+\mu(a, x)\phi+(\vartheta+\epsilon)\phi\nonumber\\
				&&\quad =-D\left[\int_{\Omega}J(x-y)\phi(a, y)dy-\phi(a, x)\right]+(\vartheta-\alpha(x))\phi+\epsilon\phi\nonumber\\
				&&\quad \geq-D\left[\int_{\Omega}J(x-y)\phi(a, y)dy-\phi(a, x)\right]+\epsilon\phi\nonumber\\
				&&\quad \geq 0 \;\text{ in }[0, a_2]\times\overline{\Omega},\nonumber
			\end{eqnarray} 
			where we used $\vartheta\geq\alpha(x)$ from Proposition \ref{Galphax} where $D=0$. It then follows that 
			$$
			-\mathcal{A}(0, \phi)+(\vartheta+\epsilon)(0, \phi)\geq(0, 0)
			$$ 
			which, by the definition of $\lambda_p'(\mathcal{A}),$ implies that $s^D(\mathcal{A})=\lambda_p'(\mathcal{A})\leq s(B_1+\mathcal{C})+\epsilon$.
			
			Next from Remark \ref{sasb}, we have
			$$ 
			s(B_1+\mathcal{C})-D=s(\mathcal{B}_1+\mathcal{C})\leq s^D(\mathcal{A}). 
			$$
			Setting $D\rightarrow0^+$, we find that
			$$ 
			s(B_1+\mathcal{C})\leq\liminf_{D\rightarrow0^+}s^D(\mathcal{A})\leq\limsup_{D\rightarrow0^+}s^D(\mathcal{A})\leq s(B_1+\mathcal{C})+\epsilon, \;\forall\epsilon>0, 
			$$
			which leads to $s^D(\mathcal{A})\rightarrow s(B_1+\mathcal{C})$ as $D\rightarrow0^+$.
			
			Finally, to show that $s^D(\mathcal{A})\rightarrow-\infty$ as $D\rightarrow\infty$, we consider the operator $K-I$. It is known from Shen and Xie \cite[Theorem 2.1 and Proposition 3.4]{shen2015approximations} that the principal eigenvalue of $-K+I$ exists and is positive. Let $\lambda^0>0$ be the principal eigenvalue of $-K+I$ and $\Psi^0\in C(\overline{\Omega})$ be an associated positive eigenfunction. Let $(\la^1, \Psi^1(a))$ be the principal eigenpair of the age-structured operator; that is, they satisfy the following equation,
			\begin{equation*}
				\begin{cases}
					\partial_a\Psi^1(a)=-(\lambda^1+\underline{\mu}(a))\Psi^1(a),\\
					\Psi^1(0)=\int_{0}^{a_2}\overline{\beta}(a)\Psi^1(a)da,
				\end{cases}
			\end{equation*}
			where $\lambda^1$ satisfies 
			$$ 
			\int_{0}^{a_2}\overline{\beta}(a)e^{-\lambda^1a}e^{-\int_{0}^{a}\underline{\mu}(s)ds}da=1. 
			$$
			Note that $\Psi^1(a)$ is positive. Now let $\lambda_D=-D\lambda^0+\lambda^1$ and $\Psi(a, x)=\Psi^0(x)\Psi^1(a)$. We have that $\Psi>0$ in $[0, a_2]\times\overline{\Omega}, (0, \Psi)\in {\rm dom}(\mathcal{A})$ and that for $(a, x)\in[0, a_2]\times\overline{\Omega}$
			\begin{eqnarray}
				&&\left[-\mathcal{A}(0, \Psi)+\lambda_D(0, \Psi)\right](a, x)\nonumber\\
				&&\quad\quad = \left( \Psi(0, x)-\int_{0}^{a_2}\!\beta(a, x)\Psi(a, x)da, \right. \nonumber \\
				&&\quad\quad\quad \left. \partial_a\Psi(a, x)-D\left[\int_{\Omega}J(x-y)\Psi(a, y)dy-\Psi(a, x)\right]+\mu(a, x)\Psi\!+\!\lambda_D\Psi \right) \nonumber\\
				&&\quad\quad :=\begin{pmatrix}
					{\rm I_1}(x), {\rm I_2}(a, x)
				\end{pmatrix}.\nonumber
			\end{eqnarray}
			Next we have 
			\begin{eqnarray}
				{\rm I_2}(a, x)&=& \partial_a\Psi^1(a) \Psi^0(x) - D \left[ \int_{\Omega} J(x-y)\Psi^0(y) dy - \Psi^0(x) \right] \Psi^1(a) \nonumber \\
				&&\quad + \mu(a, x) \Psi^1(a)\Psi^0(x) + (- D\lambda^0+\lambda^1)\Psi^0(x)\Psi^1(a) \nonumber\\
				&\geq & \left(\partial_a\Psi^1(a)+\underline{\mu}(a)\Psi^1(a)+\lambda^1\Psi^1(a)\right)\Psi^0(x)+D\lambda^0\Psi^0(x)\Psi^1(a)-D\lambda^0\Psi^0(x)\Psi^1(a) \nonumber\\
				&=& 0, \;\text{ in }(0, a_2]\times\overline{\Omega}\nonumber
			\end{eqnarray}
			and 
			\begin{eqnarray}
				{\rm I_1}(x)=\int_{0}^{a_2}\overline{\beta}(a)\Psi^1(a)da\Psi^0(x)-\int_{0}^{a_2}\beta(a, x)\Psi^1(a)\Psi^0(x)da\geq0, \;\text{ in }\overline{\Omega}.\nonumber
			\end{eqnarray}
			Thus, $(\lambda_D, (0, \Psi))$ is a test pair for $\lambda_p'(\mathcal{A})$. It follows that $s^D(\mathcal{A})=\lambda_p'(\mathcal{A})\leq\lambda_D$. Setting $D\rightarrow\infty$, we reach at $s^D(\mathcal{A})\rightarrow-\infty$ as $D\rightarrow\infty$.
		\end{proof}
		
		\begin{remark}\label{alpha1}
			{\rm From Proposition \ref{rGalpha}, we know that $s(B_1+\mathcal{C})$ equals the value $\alpha_1$ which satisfies 
				$$ 
				\max_{x\in\overline{\Omega}}\int_{0}^{a_2}\beta(a, x)e^{-\alpha_1a}\pi(0, a, x)da=1. 
				$$
			} 
		\end{remark}
		
		\begin{theorem}
			Let Assumption \ref{beta} hold and assume that $\mu(a, x)=\mu_1(a)+\mu_2(x), \;\beta(a, x)\equiv\beta(a)$, where $\beta, \mu_1\in L^\infty_+(0, a_2)$ and $\mu_2\in C_+(\overline\Omega)$. In addition, assume that $J$ is symmetric, i.e. $J(x)=J(-x),$ and that the operator
			$$ 
			v\rightarrow D\left[\int_{\Omega}J(\cdot-y)v(y)-v\right]-\mu_2(\cdot)v: C(\overline{\Omega})\rightarrow C(\overline{\Omega}) 
			$$
			admits a principal eigenvalue. Then the function $D\rightarrow s^D(\mathcal{A})$ is decreasing.
		\end{theorem}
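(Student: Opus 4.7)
The plan is to exploit the separated structure of the data. Since $\mu(a,x) = \mu_1(a) + \mu_2(x)$ and $\beta$ depends only on $a$, the evolution family factorizes as $\mathcal{U}(0,a) = e^{-\int_0^a \mu_1(s)\,ds}\, e^{a\mathcal{L}_D}$, where $\mathcal{L}_D\psi := D(K-I)\psi - \mu_2 \psi$ is the spatial operator assumed to admit a principal eigenvalue $\Lambda(D)$ with positive eigenfunction $\psi_D \in C(\overline\Omega)$. Plugging this into $\mathcal{M}_\lambda$ from \eqref{Glambda} gives
\begin{equation*}
\mathcal{M}_\lambda \eta = \int_0^{a_2} \beta(a)\, e^{-\lambda a - \int_0^a\mu_1(s)\,ds}\, e^{a\mathcal{L}_D}\eta\,da,
\end{equation*}
and evaluating at $\psi_D$ yields $\mathcal{M}_\lambda \psi_D = h_D(\lambda)\psi_D$ with $h_D(\lambda) := \int_0^{a_2}\beta(a)\, e^{-(\lambda-\Lambda(D))a - \int_0^a\mu_1(s)\,ds}\,da$. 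Letting $\bar{\lambda}$ be the unique real root of $\int_0^{a_2}\beta(a)\, e^{-\bar\lambda a - \int_0^a \mu_1(s)\,ds}\,da = 1$ (a $D$-independent quantity), one has $\mathcal{M}_{\bar\lambda + \Lambda(D)}\psi_D = \psi_D$. Combining this with the self-adjointness of $\mathcal{L}_D$ on $L^2(\Omega)$ (from symmetry of $J$) and the monotonicity of $t \mapsto e^{at}$, the functional calculus gives $r(\mathcal{M}_{\bar\lambda+\Lambda(D)}) = 1$, so Proposition \ref{sA} pins down the spectral bound as
\begin{equation*}
s^D(\mathcal{A}) = \bar\lambda + \Lambda(D).
\end{equation*}

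It then remains to show $D \mapsto \Lambda(D)$ is strictly decreasing. Since $J$ is symmetric, $\mathcal{L}_D$ is self-adjoint on $L^2(\Omega)$ and $\Lambda(D)$ admits the Rayleigh quotient characterization
\begin{equation*}
\Lambda(D) = \sup_{\psi \in L^2(\Omega)\setminus\{0\}} \frac{D\langle (K-I)\psi,\psi\rangle - \langle \mu_2\psi,\psi\rangle}{\|\psi\|_{L^2}^2}.
\end{equation*}
For $0 < D_1 < D_2$, testing at the principal eigenfunction $\psi_{D_2}$ of $\mathcal{L}_{D_2}$ yields
\begin{equation*}
\Lambda(D_1) \ge \Lambda(D_2) + (D_1-D_2)\frac{\langle (K-I)\psi_{D_2},\psi_{D_2}\rangle}{\|\psi_{D_2}\|_{L^2}^2},
\end{equation*}
so the claim reduces to $\langle (K-I)\psi_{D_2},\psi_{D_2}\rangle < 0$. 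This in turn will follow from the standard symmetrization identity
\begin{equation*}
\langle (I-K)\psi,\psi\rangle = \tfrac12\int_\Omega\!\!\int_\Omega J(x-y)\bigl(\psi(x)-\psi(y)\bigr)^2 dy\,dx + \int_\Omega \psi(x)^2 \Bigl(1-\int_\Omega J(x-y)\,dy\Bigr)dx,
\end{equation*}
both terms being nonnegative; since $\psi_{D_2} > 0$ on $\overline\Omega$ and the boundary layer $\{x \in \Omega: \int_\Omega J(x-y)\,dy < 1\}$ has positive Lebesgue measure (because $\Omega$ is bounded and $J$ is supported in $B(0,r)$), the second term is strictly positive, yielding the strict inequality $\Lambda(D_1) > \Lambda(D_2)$.

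The main obstacle is the rigorous identification $s^D(\mathcal{A}) = \bar\lambda + \Lambda(D)$: the separation ansatz directly produces a positive eigenvector of $\mathcal{A}$ with this eigenvalue, but ruling out the possibility that some larger $\lambda$ has $r(\mathcal{M}_\lambda) = 1$ requires using that $\mathcal{L}_D$ is self-adjoint in $L^2$ with top-of-spectrum $\Lambda(D)$, so that the positive operator $e^{a\mathcal{L}_D}$ has $L^2$-spectral radius $e^{a\Lambda(D)}$ and $\mathcal{M}_\lambda$ inherits $r(\mathcal{M}_\lambda) = h_D(\lambda)$ via the functional calculus. A secondary subtlety is that $\Lambda(D)$ is assumed to exist as a principal eigenvalue on $C(\overline\Omega)$, whereas the Rayleigh quotient lives in $L^2(\Omega)$; these two coincide because the continuous positive eigenfunction $\psi_D$ belongs to $L^2(\Omega)$ and $\mathcal{L}_D$ extends to a bounded self-adjoint operator on $L^2$ whose top of spectrum, attained at $\psi_D$, necessarily equals $\Lambda(D)$.
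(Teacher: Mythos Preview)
Your approach is essentially the same as the paper's: both exploit the separated structure to write $s^D(\mathcal{A})$ as the sum of the age-only principal eigenvalue $\bar\lambda$ (the paper's $\lambda_1(\mathcal T)$) and the spatial principal eigenvalue $\Lambda(D)$ (the paper's $-\lambda_1^D(L)$), with tensor-product principal eigenfunction, and then reduce to monotonicity of $D\mapsto\Lambda(D)$. The paper simply cites Shen and Vo \cite[Theorem C(2)]{shen2019nonlocal} for that monotonicity, whereas you prove it directly via the $L^2$ Rayleigh quotient and the symmetrization identity for $I-K$; your argument is self-contained and makes clear exactly where the symmetry of $J$ and the Dirichlet boundary defect $1-\int_\Omega J(x-y)\,dy$ enter.

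One simplification: your identification $r(\mathcal{M}_\lambda)=h_D(\lambda)$ via $L^2$ functional calculus is correct but slightly roundabout given that Proposition~\ref{sA} is stated for $X=C(\overline\Omega)$ or $L^1(\Omega)$. A more direct route, staying in $C(\overline\Omega)$, uses positivity: since $\psi_D>0$ on $\overline\Omega$ is continuous, there are constants $0<c_1\le c_2$ with $c_1\le\psi_D\le c_2$, so every $\eta\in C(\overline\Omega)$ with $\|\eta\|_\infty\le 1$ satisfies $-c_1^{-1}\psi_D\le\eta\le c_1^{-1}\psi_D$; positivity of $\mathcal{M}_\lambda$ then gives $\|\mathcal{M}_\lambda^n\eta\|_\infty\le (c_2/c_1)\,h_D(\lambda)^n$, hence $r(\mathcal{M}_\lambda)\le h_D(\lambda)$, with equality from $\mathcal{M}_\lambda\psi_D=h_D(\lambda)\psi_D$. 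This also sidesteps the $C$-versus-$L^2$ issue you flag at the end. The paper, for its part, does not spell out this identification at all and simply asserts that the tensor product furnishes the principal eigenpair.
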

		
		\begin{proof}
			We define $L: C(\overline\Omega)\to C(\overline\Omega)$ and $\mathcal T: \{0\}\times W^{1, 1}(0, a_2)\subset \{0\}\times L^1(0, a_2)\to \{0\}\times L^1(0, a_2)$ respectively as follows,
			\begin{eqnarray} 
				&& Lv=D\left[\int_{\Omega}J(\cdot-y)v(y)dy-v\right]-\mu_2(\cdot)v, \; v\in C(\overline{\Omega})\nonumber\\ 
				&&\mathcal{T}(0, \phi)=\left(-\phi(0)+\int_{0}^{a_2}\beta(a)\phi(a)da, \;-\partial_a\phi-\mu_1\phi\right),\; \phi\in W^{1, 1}(0, a_2). \nonumber
			\end{eqnarray}
			Let $(\lambda_1^D(L), v_1)$ be the principal eigenpair of $-L$. Then by the same argument as in Shen and Vo \cite[Theorem C(2)]{shen2019nonlocal}, we have that $D\rightarrow\lambda_1^D(L)$ is increasing. Now let $(\la_1(\mathcal{T}), (0, \phi_1))$ be the principal eigenpair of $\mathcal{T}$. It follows that $s^D(\mathcal{A})=-\lambda_1^D(L)+\lambda_1(\mathcal{T})$ is the principal eigenvalue of $\mathcal{A}$ with the principal eigenfunction $\left(0, v_1\phi_1\right)$. As $D\rightarrow\lambda_1^D(L)$ is increasing, so $D\rightarrow s^D(\mathcal{A})$ is decreasing.
		\end{proof}
		
		\subsection{With Kernel Scaling}
		In this subsection we study the effects of the diffusion rate and diffusion range on the principal eigenvalue. Define $K_{\gamma, \Omega}$ as follows:
		\begin{equation}\label{LsigmamOmega}
			[K_{\gamma, \Omega}f](\cdot)=\int_{\Omega}J_\gamma(\cdot-y)f(y)dy, \; f\in X.
		\end{equation}
		Here the kernel $J_\ga$ satisfies the scaling $J_\gamma(x)=\frac{1}{\gamma^N}J\left(\frac{x}{\gamma}\right)$ for $x\in\mathbb{R}^N$, where $\gamma>0$ represents the diffusion range. Then we introduce the nonlocal diffusion operator $\frac{D}{\gamma^m}\left[K_{\gamma, \Omega}-I\right]$, where $m\ge0$ denotes the cost parameter.
		
		Write $\mathcal{A}_{\gamma, m, \Omega}=\mathcal{B}_{\gamma, m, \Omega}+\mathcal{C}$ for $\mathcal{A}=\mathcal{B}+\mathcal{C}$ to highlight the dependence on $\gamma, m$ and $\Omega$ and further denote $\mathcal{B}_{\gamma, m, \Omega}^\mu,\, \mathcal{C}^\beta$ for $\mathcal{B},\, \mathcal{C}$ to represent the dependence on $\mu$ and $\beta$ respectively. We mainly employ the idea from Shen and Vo \cite[Theorem D]{shen2019nonlocal} to prove the following results.
		
		\begin{proposition}\label{property}
			Let Assumption \ref{beta} hold and let $m\geq0, \gamma>0$. We have the following statements.
			\begin{itemize}
				\item [(i)] 
				$s(\mathcal{B}_{\gamma, m, \Omega}+\mathcal{C}^\beta)$ is non-decreasing with respect to $\beta$ and $s(\mathcal{B}_{\gamma, m, \Omega}^\mu+\mathcal{C})$ is non-increasing with respect to $\mu$; 
				
				\item [(ii)] Let the assumptions in Theorem \ref{I} or Theorem \ref{II} hold, where $D$ is changed into $\frac{D}{\gamma^m}$, then $s(\mathcal{A}_{\gamma, m, \Omega})$ is the principal eigenvalue of $\mathcal{A}_{\gamma, m, \Omega}$. Assume that $\lambda_1(\mathcal{A}_{\gamma, m, \Omega})$ is the eigenvalue of $\mathcal{A}_{\gamma, m, \Omega}$ associated with $(0, \phi)\in {\rm dom}(\mathcal{A}_{\ga, m, \Omega})$ satisfying $\phi>0$ in $[0, a_2]$, then 
				$$ 
				\lambda_1(\mathcal{A}_{\gamma, m, \Omega})=\lambda_p(\mathcal{A}_{\gamma, m, \Omega})=\lambda'_{p}(\mathcal{A}_{\gamma, m, \Omega}); 
				$$
				
				\item [(iii)] Moreover, $\la_p(\mathcal{B}_{\gamma, m, \Omega}^\mu+\mathcal{C})$ is Lipschitz continuous with respect to $\mu$ in $C(\overline{\Omega}, L^\infty_+(0, a_2))$. More precisely, we have
				$$
				|\la_p(\mathcal{B}_{\gamma, m, \Omega}^{\mu_1}+\mathcal{C})-\la_p(\mathcal{B}_{\gamma, m, \Omega}^{\mu_2}+\mathcal{C})|\leq\norm{\mu_1-\mu_2}_{C(\overline{\Omega}, L^\infty_+(0, a_2))}
				$$
				for any $\mu_1, \mu_2\in C(\overline{\Omega}, L^\infty_+(0, a_2))$;
				
				\item [(iv)] If $\Omega_1\subset\Omega_2$, then $\lambda_p'(\mathcal{A}_{\gamma, m, \Omega_1})\leq\lambda_p'(\mathcal{A}_{\gamma, m, \Omega_2})$. Assume that in addition $X=C(\overline{\Omega})$, $s(\mathcal{A}_{\gamma, m, \Omega_1})$ and $s(\mathcal{A}_{\gamma, m, \Omega_2})$ are principal eigenvalues of $\mathcal{A}_{\gamma, m, \Omega_1}$ and $\mathcal{A}_{\gamma, m, \Omega_2}$ respectively, then we have
				$$ 
				|\lambda_p'(\mathcal{A}_{\gamma, m, \Omega_1})-\lambda_p'(\mathcal{A}_{\gamma, m, \Omega_2})|\leq C_0|\Omega_2\setminus\Omega_1|, 
				$$
				where $C_0>0$ depends on $a, \gamma, m, J_\gamma$ and $\Omega_2$;
				
				\item[(v)] Assume that $s(\mathcal{A}_{\gamma, m, \Omega})$ is the principal eigenvalue of $\mathcal{A}_{\gamma, m, \Omega}$, then the function $\gamma\rightarrow s(\mathcal{A}_{\gamma, m, \Omega})$ is continuous.
			\end{itemize}
		\end{proposition}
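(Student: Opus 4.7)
The plan is to handle each of the five parts via one of three characterizations of $s(\mathcal{A}_{\gamma,m,\Omega})$ established previously: the spectral equation $r(\mathcal{M}_\lambda)=1$ from Proposition \ref{sA}, the identity $\lambda_1=\lambda_p=\lambda_p'$ from Proposition \ref{principleequal}, and the variational definitions of $\lambda_p, \lambda_p'$. For (i), observe that $\mathcal{M}_\lambda$ is monotone in the positive-operator order in $\beta$ (increasing) and in $\mu$ (decreasing, through the evolution family $\mathcal{U}(0,a)$), so $r(\mathcal{M}_\lambda)$ inherits this monotonicity by the standard Perron--Frobenius theory for positive operators; since $\lambda\mapsto r(\mathcal{M}_\lambda)$ is strictly decreasing, the root $\lambda_0$ of $r(\mathcal{M}_{\lambda_0})=1$ inherits the claimed monotonicity with respect to $\beta$ and $\mu$. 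Part (ii) is a direct application of Proposition \ref{principleequal} to $\mathcal{A}_{\gamma,m,\Omega}$ (with the diffusion rate $D/\gamma^m$ replacing $D$), combined with Theorem \ref{I} or Theorem \ref{II} to guarantee existence of the principal eigenvalue under the stated hypotheses.

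For (iii), I would first derive monotonicity of $\lambda_p$ itself (not only $s$) in $\mu$ from the variational definition: if $\phi>0$ satisfies $(-\mathcal{B}^{\mu_2}-\mathcal{C}+\lambda)(0,\phi)\leq 0$ and $\mu_1\leq\mu_2$, then replacing $\mu_2$ by $\mu_1$ only relaxes the second-component inequality, so the same test pair works for $\mathcal{B}^{\mu_1}+\mathcal{C}$, giving $\lambda_p(\mathcal{B}^{\mu_1}+\mathcal{C})\geq\lambda_p(\mathcal{B}^{\mu_2}+\mathcal{C})$. Next, the substitution $(\phi,\lambda)\mapsto(\phi,\lambda-c)$ in the defining inequality shows $\lambda_p(\mathcal{B}^{\mu+c}+\mathcal{C})=\lambda_p(\mathcal{B}^{\mu}+\mathcal{C})-c$ for any scalar $c$. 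Applying both facts with $c=\|\mu_1-\mu_2\|_{C(\overline{\Omega},L^\infty_+(0,a_2))}$ and the bilateral bound $\mu_2-c\leq\mu_1\leq\mu_2+c$ delivers the Lipschitz estimate.

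For (iv), monotonicity in $\Omega$ again follows from the variational characterization of $\lambda_p'$: if $(\lambda,\phi)$ is admissible on $\Omega_2$ then the restriction $\phi|_{\Omega_1}$ remains admissible on $\Omega_1$, because the dropped kernel mass $\int_{\Omega_2\setminus\Omega_1}J_\gamma(\cdot-y)\phi(a,y)dy$ enters the second component with a favorable sign. For the Lipschitz bound, under $X=C(\overline{\Omega})$ take the principal eigenfunction $\phi_2\in C([0,a_2]\times\overline{\Omega}_2)$ of $\mathcal{A}_{\gamma,m,\Omega_2}$ with eigenvalue $\lambda_2$. Its strict positivity (from simplicity, Theorem \ref{sim}) and continuity on the compact set $[0,a_2]\times\overline{\Omega}_1$ give bounds $0<c_1\leq\phi_2\leq c_2$ there. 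Then $\phi_2|_{\Omega_1}$ serves as a test pair for $\lambda_p'(\mathcal{A}_{\gamma,m,\Omega_1})$ with the shifted eigenvalue $\lambda_2-C_0|\Omega_2\setminus\Omega_1|$, where $C_0=(D/\gamma^m)\|J_\gamma\|_\infty c_2/c_1$, yielding the matching upper bound.

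For (v), the key input is norm continuity $\gamma\mapsto K_{\gamma,\Omega}\in\mathcal{L}(X)$, valid both in $X=L^1(\Omega)$ (by dominated convergence) and in $X=C(\overline{\Omega})$ (by uniform continuity of $J$ from Assumption \ref{J}); this propagates via formula \eqref{U} to norm continuity of the evolution family $\mathcal{U}_\gamma(0,a)$ and hence of the operator $\mathcal{M}_\lambda^\gamma$. Under the principal-eigenvalue hypothesis, $r(\mathcal{M}_{s(\mathcal{A}_\gamma)}^\gamma)=1$ is a genuine isolated eigenvalue of finite multiplicity by Proposition \ref{eigenvalues}, so classical spectral perturbation theory gives joint continuity of $(\gamma,\lambda)\mapsto r(\mathcal{M}_\lambda^\gamma)$; combined with strict monotonicity in $\lambda$, a standard implicit-function argument yields continuity of $\gamma\mapsto s(\mathcal{A}_{\gamma,m,\Omega})$. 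The main obstacle throughout is the quantitative estimate in (iv): preserving a strictly positive lower bound on the principal eigenfunction up to the boundary of the subdomain is what allows the extra kernel mass on $\Omega_2\setminus\Omega_1$ to be converted into a shift of the eigenvalue linear in the measure of the symmetric difference, and this is precisely why the hypotheses $X=C(\overline{\Omega})$ and algebraic simplicity (Theorem \ref{sim}) are needed in that part.
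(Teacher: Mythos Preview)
Your proposal is correct and follows essentially the same approach as the paper for all five parts. One minor slip in (iv): the shifted pair $(\phi_2|_{\Omega_1},\,\lambda_2-C_0|\Omega_2\setminus\Omega_1|)$ satisfies the $\leq$ inequality and is therefore a test pair for $\lambda_p$ (not $\lambda_p'$), so you need to invoke (ii) to identify $\lambda_p(\mathcal{A}_{\gamma,m,\Omega_1})=\lambda_p'(\mathcal{A}_{\gamma,m,\Omega_1})$ before concluding the lower bound---exactly as the paper does.
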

		
		\begin{proof}
			For $(i)$, if $\beta_1\geq\beta_2$, it follows that $\mathcal{M}_\lambda(\beta_1)\geq\mathcal{M}_\lambda(\beta_2)$ in the positive operator sense which implies that $r(\mathcal{M}_\lambda(\beta_1))\geq r(\mathcal{M}_\lambda(\beta_2))$. Thus by Proposition \ref{sA}, we have $s(\mathcal{B}_{\gamma, m, \Omega}+\mathcal{C}^{\beta_1})\geq s(\mathcal{B}_{\gamma, m, \Omega}+\mathcal{C}^{\beta_2})$ by the monotonicity of $r(\mathcal{M}_\lambda)$ with respect to $\lambda$. 
			
			Similarly, when $\mu_1\geq\mu_2$, since $\mathcal{U}_{\mu_1}(0, a)$ and $\mathcal{U}_{\mu_2}(0, a)$ are positive in $C(\overline{\Omega})$, we have $\mathcal{U}_{\mu_1}(0, a)\leq\mathcal{U}_{\mu_2}(0, a)$ in the positive operator sense, which implies that $\mathcal{M}_\lambda(\mu_1)\leq\mathcal{M}_{\lambda}(\mu_2)$. Then it follows that $r(\mathcal{M}_\lambda(\mu_1))\leq r(\mathcal{M}_\lambda(\mu_2))$, hence $s(\mathcal{B}_{\gamma, m, \Omega}^{\mu_1}+\mathcal{C})\leq s(\mathcal{B}_{\gamma, m, \Omega}^{\mu_2}+\mathcal{C})$ by the above argument.
			
			For $(ii)$, it follows from Theorem \ref{I} or Theorem \ref{II} and Proposition \ref{principleequal}.
			
			For $(iii)$, fix $\lambda<\la_p(\mathcal{B}_{\gamma, m, \Omega}^{\mu_1}+\mathcal{C})$. By Definition \ref{GPE}, there exists $(0, \phi)\in {\rm dom}(\mathcal{A}_{\gamma, m, \Omega})$ with $\phi>0$ in $[0, a_2]$ such that
			$$ 
			-\mathcal{B}_{\gamma, m, \Omega}^{\mu_1}(0, \phi)-\mathcal{C}(0, \phi)+\lambda(0, \phi)\leq(0, 0), \;\text{ in }[0, a_2]. 
			$$
			Clearly, we have
			\begin{eqnarray}
				(0, 0) &\geq&-\mathcal{B}_{\gamma, m, \Omega}^{\mu_1}(0, \phi)-\mathcal{C}(0, \phi)+\lambda(0, \phi)\nonumber\\
				&=& \left(\phi(0)-\int_{0}^{a_2}\beta(a, \cdot)\phi(a)da,\quad \partial_a\phi-\frac{D}{\gamma^m}\left[K_{\gamma, \Omega}-I\right]\phi+\mu_1\phi+\lambda\phi\right)\nonumber\\
				&=&\left(\phi(0)-\int_{0}^{a_2}\beta(a, \cdot)\phi(a)da,\quad \partial_a\phi-\frac{D}{\gamma^m}\left[K_{\gamma, \Omega}-I\right]\phi+[\mu_2+\mu_1-\mu_2]\phi+\lambda\phi\right)\nonumber\\
				&\geq&\left(\phi(0)-\int_{0}^{a_2}\beta(a, \cdot)\phi(a)da,\quad \partial_a\phi-\frac{D}{\gamma^m}\left[K_{\gamma, \Omega}-I\right]\phi+\mu_2\phi+\lambda-\norm{\mu_1-\mu_2}_{C(\overline{\Omega}, L^\infty_+(0, a_2))}\phi\right). \nonumber
			\end{eqnarray}
			Again by Definition \ref{GPE}, we get
			$$ 
			\lambda-\norm{\mu_1-\mu_2}_{C(\overline{\Omega}, L^\infty_+(0, a_2))}\leq \la_p(\mathcal{B}_{\gamma, m, \Omega}^{\mu_2}+\mathcal{C}). 
			$$
			Since this holds for any $\lambda<\la_p(\mathcal{B}_{\gamma, m, \Omega}^{\mu_1}+\mathcal{C})$, we arrive at 
			$$ 
			\la_p(\mathcal{B}_{\gamma, m, \Omega}^{\mu_1}+\mathcal{C})-\la_p(\mathcal{B}_{\gamma, m, \Omega}^{\mu_2}+\mathcal{C})\leq\norm{\mu_1-\mu_2}_{C(\overline{\Omega}, L^\infty_+(0, a_2))}. 
			$$
			Switching the roles of $\mu_1$ and $\mu_2$, we find that
			$$ 
			\la_p(\mathcal{B}_{\gamma, m, \Omega}^{\mu_2}+\mathcal{C})-\la_p(\mathcal{B}_{\gamma, m, \Omega}^{\mu_1}+\mathcal{C})\leq\norm{\mu_1-\mu_2}_{C(\overline{\Omega}, L^\infty_+(0, a_2))}. 
			$$
			Thus the result follows.
			
			For $(iv)$, let $(\la, \psi)$ be a pair for $\lambda_p'(\mathcal{A}_{\gamma, m, \Omega_2})$. Then $\psi\in W^{1, 1}((0, a_2), C(\overline{\Omega}_2))$ satisfies $\psi>0$ in $[0, a_2]\times\overline\Omega$. Define
			$$
			[\psi_{\overline{\Omega}_1}(a)](x)=[\psi(a)](x), \; (a, x)\in[0, a_2]\times\overline{\Omega}_1\subset[0, a_2]\times\overline{\Omega}_2.
			$$
			Then $\psi_{\overline{\Omega}_1}>0$ in $[0, a_2]$ and belongs into $W^{1. 1}((0, a_2), C(\overline\Omega_1))$. Moreover, for any $(a, x)\in[0, a_2]\times\overline{\Omega}_1$, one has
\begin{eqnarray}
				&&(-\mathcal{A}_{\ga, m, \Omega_1}+\la)(0, \psi_{\overline{\Omega}_1})\nonumber\\
				&&\quad = \left(\psi_{\overline{\Omega}_1}(0)-\int_0^{a_2}\beta(a, \cdot)\psi_{\overline{\Omega}_1}(a)da,\quad
				\partial_a\psi_{\overline{\Omega}_1}-\frac{D}{\gamma^m}\left[K_{\ga, \Omega_1}-I\right]\psi_{\overline{\Omega}_1}+\mu(a, \cdot)\psi_{\overline{\Omega}_1}+\lambda\psi_{\overline{\Omega}_1}\right)\nonumber\\
				&& \quad \ge\left(\psi(0)-\int_0^{a_2}\beta(a, \cdot)\psi(a)da,\quad
				\partial_a\psi-\frac{D}{\gamma^m}\left[K_{\ga, \Omega_2}-I\right]\psi+\mu(a, \cdot)\psi+\lambda\psi\right)\nonumber\\
				&& \quad =(-\mathcal{A}_{\ga, m, \Omega_2}+\la)(0, \psi)\nonumber\\
				&&\quad \ge (0, 0).
			\end{eqnarray}
			That is, $(\la, \psi_{\overline{\Omega}_1})$ is a test pair for $\la_p'(\mathcal{A}_{\ga, m, \Omega_1})$ and hence, $\la\ge\la_p' (\mathcal{A}_{\ga, m, \Omega_1})$. Taking the infimum over all such $\la$, we arrive at
			\begin{eqnarray}\label{p'}
				\la_p'(\mathcal{A}_{\ga, m, \Omega_2})\ge \la_p'(\mathcal{A}_{\ga, m, \Omega_1}).
			\end{eqnarray} 
			Note that here we obtain \eqref{p'} which is reversed compared with \cite[Proposition 6.1-(iv)]{shen2019nonlocal} since we are using $\la_p'$ instead of their relation on $\la_p$.
			
			To prove the second statement, first note that $W^{1, 1}((0, a_2), C(\overline{\Omega}))\subset C([0, a_2]\times\overline{\Omega})$, it follows that $\psi>0$ in $[0, a_2]\times\overline{\Omega}$. Thus we can choose a eigenpair $(\la_p'(\mathcal{A}_{\ga, m, \Omega_2}), \psi)$ of $\mathcal{A}_{\ga, m, \Omega_2}$ with normalization $\max_{(a, x)\in[0, a_2]\times\overline{\Omega}_2}\psi=1$. Direct calculations yield
			\begin{eqnarray}
				&&(-\mathcal{A}_{\ga, m, \Omega_1}+\la_p'(\mathcal{A}_{\ga, m, \Omega_2})(0, \psi)\nonumber\\
				&&\quad = \left(\psi(0)-\int_0^{a_2}\beta(a, \cdot)\psi(a)da,\quad
				\partial_a\psi-\frac{D}{\gamma^m}\left[K_{\ga, \Omega_1}-I\right]\psi+\mu(a, \cdot)\psi+\la_p'(\mathcal{A}_{\ga, m, \Omega_2})\psi\right)\nonumber\\
				&&\quad = \left(0,\quad
				\frac{D}{\ga^m}\int_{\Omega_2\setminus\Omega_1}J_\ga(\cdot-y)[\psi(a)](y)dy\right)\nonumber\\
				&&\quad \le\left(0, \quad \frac{D\norm{J_\ga}_\infty}{\ga^m}|\Omega_2\setminus\Omega_1|\right)\nonumber\\
				&&\quad \le \left(0, \quad \frac{D\norm{J_\ga}_\infty}{\ga^m\min_{\overline{\Omega}_1}\psi}|\Omega_2\setminus\Omega_1|\psi\right).
			\end{eqnarray}
			That is, 
			$$
			-\mathcal{A}_{\ga, m, \Omega_1}(0, \psi)+\left[\la_p'(\mathcal{A}_{\ga, m, \Omega_2})-C_0|\Omega_2\setminus\Omega_1|\right](0, \psi)\le(0, 0), \text{ in }[0, a_2],
			$$
			where $C_0=\frac{D\norm{J_\ga}_\infty}{\ga^m\min_{\overline{\Omega}_1}\psi}$. By (ii), one has
			$$
			\la_p'(\mathcal{A}_{\ga, m, \Omega_1})=\la_p(\mathcal{A}_{\ga, m, \Omega_1})\ge\la_p'(\mathcal{A}_{\ga, m, \Omega_2})-C_0|\Omega_2\setminus\Omega_1|.
			$$
			This together with \eqref{p'} leads to the result.
			
			For $(v)$ we can use the same argument in proving the continuity of $D\rightarrow s^D(\mathcal{A})$ in Theorem \ref{Dlambda} combing with the argument in Shen and Vo \cite[Proposition 6.1 (5)]{shen2019nonlocal} and omit it here.
		\end{proof}
		
		\begin{theorem}\label{gamma}
			Let Assumption \ref{beta} hold. Assume that $s(\mathcal{A}_{\gamma, m, \Omega})$ is the principal eigenvalue of $\mathcal{A}_{\gamma, m, \Omega}$, then 
			\begin{itemize}
				\item [(i)] As $\gamma\rightarrow\infty$, there holds 
				\begin{equation}
					s(\mathcal{A}_{\gamma, m, \Omega})\rightarrow
					\begin{cases}
						s(B_1+\mathcal{C})-D, &\; m=0,\\
						s(B_1+\mathcal{C}), &\; m>0, 
					\end{cases}
				\end{equation}
				where 
				$$ 
				B_1(0, f)=\left(-f(0, \cdot), \;-\partial_af-\mu f\right), \; f\in W^{1, 1}((0, a_2), X); 
				$$

				\item[(ii)] Assume, in addition, that $J$ is symmetric, i.e. $J(x)=J(-x)$, $\mu\in C^2(\R^N, L^\infty_+(0, a_2))$ and $\beta\in C^2(\R^N, L^\infty_+(0, a_2))$. As $\gamma\rightarrow0^+$, there holds
				$$ 
				s(\mathcal{A}_{\gamma, m, \Omega})\rightarrow s(B_1+\mathcal{C}),\;\forall m\in[0, 2).
				$$
				
				\item[(iii)] In the case $m=0$, if $\Omega$ contains the origin and $\mu(a, x)$ is radially symmetric and radially non-decreasing with respect to $x$; namely, $\mu(a, x)=\mu(a, y)$ if $|x|=|y|$ and $\mu(a, x)\geq \mu(a, y)$ if $|x|\geq|y|$ for all $a\in[0, \hat{a})$, then $\gamma\rightarrow s(\mathcal{A}_{\gamma, 0, \Omega})$ is non-increasing.
			\end{itemize}
		\end{theorem}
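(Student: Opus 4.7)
My strategy is to leverage the identity $s(\mathcal{A}_{\gamma, m, \Omega}) = \lambda_p(\mathcal{A}_{\gamma, m, \Omega}) = \lambda_p'(\mathcal{A}_{\gamma, m, \Omega})$ from Proposition \ref{property}(ii), which reduces each claim to constructing explicit sub- and supersolutions in the sense of Definition \ref{GPE}. The universal test function, following the proof of Theorem \ref{Dlambda}, is
$$\phi(a, x) := e^{-\alpha(x)\,a}\,\pi(0, a, x),$$
where $\alpha(x)$ is the principal eigenvalue of $B_1^x + \mathcal{C}^x$ from Proposition \ref{Galphax} with $D=0$ and $\max_{\overline\Omega}\alpha = s(B_1+\mathcal{C})$ by Remark \ref{alpha1}. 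This $\phi$ is strictly positive on $[0, a_2]\times\overline\Omega$, lies in $W^{1, 1}((0, a_2), X)$, and satisfies the renewal boundary condition pointwise in $x$, so inserting it into $-\mathcal{A}_{\gamma, m, \Omega} + \lambda$ annihilates the first component and reduces the second to $(\lambda - \alpha(x))\phi + (D/\gamma^m)\phi - (D/\gamma^m)K_{\gamma, \Omega}\phi$.

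For (i), boundedness of $\Omega$ combined with $J_\gamma(z) = \gamma^{-N}J(z/\gamma)$ gives $\|K_{\gamma, \Omega}\phi(a, \cdot)\|_X = O(\gamma^{-N})$ uniformly in $a$ as $\gamma\to\infty$. For $m=0$ the surviving interior term is $(\lambda - \alpha(x) + D)\phi$, nonnegative provided $\lambda = s(B_1+\mathcal{C}) - D + \varepsilon$, yielding $\lambda_p'(\mathcal{A}_{\gamma, 0, \Omega})\le s(B_1+\mathcal{C}) - D + \varepsilon$ for $\gamma$ large. For $m>0$ the prefactor $\gamma^{-m}\to 0$ kills the entire diffusion contribution and $\lambda = s(B_1+\mathcal{C}) + \varepsilon$ works. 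The matching lower bounds are immediate from Proposition \ref{property}(i): the operator inequality $\mathcal{A}_{\gamma, m, \Omega}\ge B_1+\mathcal{C} - D/\gamma^m$, obtained by dropping the positive operator $(D/\gamma^m)K_{\gamma, \Omega}$, suffices in both cases since $D/\gamma^m \to D$ for $m=0$ and $D/\gamma^m\to 0$ for $m>0$.

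For (ii), I reuse $\phi$, now $C^2$ in $x$ by the $C^2$ assumptions on $\beta, \mu$. Using the symmetry $J(x) = J(-x)$, a Taylor expansion gives, for $x$ with $x - \gamma\,\mathrm{supp}(J)\subset\Omega$,
$$\frac{1}{\gamma^m}\bigl[K_{\gamma, \Omega}\phi(a, x) - \phi(a, x)\bigr] = \frac{\gamma^{2-m}}{2}\sum_{i, j=1}^N c_{ij}\,\partial_{x_ix_j}^2\phi(a, x) + o\bigl(\gamma^{2-m}\bigr), \qquad c_{ij}:=\int_{\R^N} z_iz_j J(z)\,dz,$$
which vanishes uniformly on interior compacta for $m<2$. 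The main subtlety is the boundary strip of thickness $\gamma r$, addressed via the $C^2$ extension of $\mu, \beta$ to $\R^N$ and a smooth cutoff of $\phi$ near $\partial\Omega$ whose contribution is $O(\gamma)$; then $\lambda_p'(\mathcal{A}_{\gamma, m, \Omega})\le s(B_1+\mathcal{C}) + \varepsilon$. The matching lower bound is more delicate since Proposition \ref{property}(i) becomes vacuous for $m>0$ as $\gamma\to 0^+$; my plan is to construct a subsolution of the form $\chi_\delta(x)\phi(a, x)$ with $\chi_\delta$ a smooth bump around a maximizer $x_0$ of $\alpha$, exploiting the fact that $\alpha^{**} - \alpha(x)$ is of higher order near $x_0$ and can be absorbed by the same Taylor estimate, yielding $\lambda_p(\mathcal{A}_{\gamma, m, \Omega}) \ge s(B_1+\mathcal{C}) - \varepsilon$.

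For (iii) I argue by direct comparison. Fix $0<\gamma_1<\gamma_2$ and let $(0, \phi_2)$ be the positive principal eigenvector at level $s_2 := s(\mathcal{A}_{\gamma_2, 0, \Omega})$. By Definition \ref{GPE} it suffices to verify that $\phi_2$ is a subsolution for $\mathcal{A}_{\gamma_1, 0, \Omega}$ at level $s_2$, which after substituting the eigenvalue equation for $\phi_2$ at $\gamma_2$ reduces to the pointwise inequality
$$\bigl[K_{\gamma_1, \Omega} - K_{\gamma_2, \Omega}\bigr]\phi_2(a, \cdot) \ge 0 \quad \text{on } \overline\Omega.$$
Extracting this from the radial structure of $\mu$ alone (without radiality of $\Omega$) is the main obstacle of the entire theorem. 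My plan is to first establish, via the strong maximum principle of Section \ref{Strong Maximum Principle} combined with the radial monotonicity of $\mu$, that $\phi_2(a, \cdot)$ is pointwise dominated by its Schwarz symmetric decreasing rearrangement in $x$, and then invoke a Riesz-type rearrangement inequality applied to the radially decreasing kernels $J_{\gamma_1}, J_{\gamma_2}$ to obtain the required convolution comparison. This rearrangement-monotonicity step is the principal analytic difficulty.
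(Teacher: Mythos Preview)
Your treatment of part (i) matches the paper's almost line for line: same test function $\phi$, same lower bound from dropping $K_{\gamma,\Omega}$, same vanishing of the kernel term as $\gamma\to\infty$.

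For part (ii) your upper bound is fine (in fact the paper avoids your boundary cutoff entirely by the one-sided inequality $\int_\Omega J_\gamma(x-y)\phi(a,y)\,dy\le\int_{\R^N}J_\gamma(x-y)\phi(a,y)\,dy$, after which Taylor expansion applies on all of $\overline\Omega$). The lower bound, however, has a real gap. A subsolution of the form $\chi_\delta(x)\phi(a,x)$ with a compactly supported bump $\chi_\delta$ is \emph{not} admissible in Definition \ref{GPE} on $\Omega$, since it vanishes on most of $\overline\Omega$ and the definition requires $\phi>0$ everywhere. The paper's way around this is to first invoke domain monotonicity (Proposition \ref{property}(iv)), i.e.\ $s(\mathcal{A}_{\gamma,m,\Omega})\ge s(\mathcal{A}_{\gamma,m,B_\delta\cap\Omega})$, and then build a genuine positive subsolution on the small ball $B_\delta\cap\Omega$ where $\alpha(x)\ge \vartheta-\epsilon$. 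Even then a second issue arises that you do not address: on $B_\delta$ the nonlocal term produces a boundary-layer error of order $\gamma^{-m-N}|B_{2\delta}\setminus B_\delta|\sim \gamma^{-m-N}\delta^N$, and your assertion that ``$\alpha^{**}-\alpha(x)$ is of higher order'' is neither guaranteed (the maximizer could lie on $\partial\Omega$) nor sufficient. The paper resolves this by introducing an extra correction $-\frac{1}{|\ln\epsilon|}$ into the candidate eigenvalue, then coupling the scales via $\epsilon=\gamma^{(m+2N)/N}$ so that the logarithmic term dominates both the Taylor remainder $O(\gamma^{2-m})$ and the boundary-layer term $O(\gamma^N)$; the result is $s(\mathcal{A}_{\gamma,m,\Omega})\ge \vartheta-\gamma^k-\frac{1}{|\ln\gamma^k|}$.

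For part (iii) your approach is headed in the wrong direction. The pointwise inequality $[K_{\gamma_1,\Omega}-K_{\gamma_2,\Omega}]\phi_2\ge 0$ is generally false, and Riesz-type rearrangement inequalities compare \emph{integrals} of triple products, not pointwise values of convolutions, so the plan you sketch cannot close. The paper's argument sidesteps all of this with a rescaling trick: writing $\Omega_\gamma=\gamma^{-1}\Omega$ and $\mu_\gamma(a,x)=\mu(a,\gamma x)$, one has $\lambda_p'(\mathcal{A}_{\gamma,0,\Omega})=\lambda_p'(\mathcal{B}^{\mu_\gamma}_{1,0,\Omega_\gamma}+\mathcal{C})$, with the kernel now fixed at $J_1=J$. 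For $\gamma_1\ge\gamma_2$ this gives $\Omega_{\gamma_1}\subset\Omega_{\gamma_2}$ (since $0\in\Omega$) and $\mu_{\gamma_1}\ge\mu_{\gamma_2}$ on $\Omega_{\gamma_1}$ (by radial monotonicity of $\mu$), whence monotonicity follows from the already-established domain and coefficient monotonicity of $\lambda_p'$. This rescaling is the missing idea.
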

		
		\begin{proof}
			$(i)$ We first prove the result in the case $m>0$. By Remark \ref{sasb}, we find that
			$$ 
			s(\mathcal{A}_{\gamma, m, \Omega})\geq s(\mathcal{B}_1+\mathcal{C})=s(B_1+\mathcal{C})-\frac{D}{\gamma^m}. 
			$$
			It follows that
			\begin{eqnarray}\label{liminf}
				\liminf_{\gamma\rightarrow\infty}s(\mathcal{A}_{\gamma, m, \Omega})\geq s(B_1+\mathcal{C}).
			\end{eqnarray}
			Let us still consider equation \eqref{supsolution} associated with a positive solution $\phi\in C(\overline{\Omega}, W^{1, 1}(0, a_2))$. As before, one has $\phi\in W^{1, 1}((0, a_2), C(\overline{\Omega}))$. Set $\vartheta=s(B_1+\mathcal{C})$ again. For any $\epsilon>0$, we see that for $(a, x)\in[0, a_2]\times\overline{\Omega}$,
			\begin{eqnarray}
				&&\left[-\mathcal{A}_{\gamma, m, \Omega}(0, \phi)+(\vartheta+\epsilon)(0, \phi)\right](a, x) \nonumber\\
				&&\quad\quad =\left(\phi(0, x)-\int_{0}^{a_2}\beta(a, x)\phi(a, x)da, \right. \nonumber\\
				&&\quad\quad\quad \left. \partial_a\phi(a, x)-\frac{D}{\gamma^m}\left[\int_{\Omega}J_\gamma(x-y)\phi(a, y)dy-\phi(a, x)\right]+\mu(a, x)\phi+(\vartheta+\epsilon)\phi \right)\nonumber
			\end{eqnarray}
			and
			\begin{eqnarray}
				&&\partial_a\phi(a, x)-\frac{D}{\gamma^m}\left[\int_{\Omega}J_\gamma(x-y)\phi(a, y)dy-\phi(a, x)\right]+\mu(a, x)\phi+(\vartheta+\epsilon)\phi\nonumber\\
				&&\quad =-\frac{D}{\gamma^m}\left[\int_{\Omega}J_\gamma(x-y)\phi(a, y)dy-\phi(a, x)\right]+\epsilon\phi+(\vartheta-\alpha(x))\phi\nonumber\\
				&&\quad \geq-\frac{D}{\gamma^m}\left[\int_{\Omega}J_\gamma(x-y)\phi(a, y)dy-\phi(a, x)\right]+\epsilon\phi.\label{Dgammam}
			\end{eqnarray} 
			Since $\min_{(a, x)\in[0, a_2]\times\overline{\Omega}}\phi(a, x)>0$ and 
			$$ 
			\norm{\frac{D}{\gamma^m}\left[\int_{\Omega}J_\gamma(\cdot-y)\phi(a, y)dy-\phi(a, \cdot)\right]}_{C(\overline{\Omega})}\rightarrow0 \;\text{ as } \gamma\rightarrow\infty 
			$$
			there is $\gamma_\epsilon>0$ such that \eqref{Dgammam}$\geq0$ in $[0, a_2]\times\overline{\Omega}$ for all $\gamma\geq\gamma_\epsilon$. It then follows that 
			$$
			-\mathcal{A}_{\gamma, m, \Omega}(0, \phi)+(\vartheta+\epsilon)(0, \phi)\geq(0, 0) \text{ in }[0, a_2]\times\overline{\Omega}, 
			$$
			which by the definition of $\lambda_p'(\mathcal{A}_{\gamma, m, \Omega})$ implies that 
			$$ 
			s(\mathcal{A}_{\gamma, m, \Omega})=\lambda_p'(\mathcal{A}_{\gamma, m, \Omega})\leq s(B_1+\mathcal{C})+\epsilon. 
			$$ 
			The arbitrariness of $\epsilon$ then yields (i) with \eqref{liminf} for $m>0$. 
			
			Now we prove the result in the cases $m=0$. Remark \ref{sasb} ensures that $s(\mathcal{A}_{\gamma, m, \Omega})\geq s(\mathcal{B}_1+\mathcal{C})=s(B_1+\mathcal{C})-D$. It remains to show that 
			\begin{eqnarray}\label{s-D}
				\limsup_{\gamma\rightarrow\infty}s(\mathcal{A}_{\gamma, m, \Omega})\leq s(B_1+\mathcal{C})-D. 
			\end{eqnarray}
			Let $\phi$ be a solution of $\eqref{supsolution}$ as above. For any $\epsilon>0$, we have for $(a, x)\in[0, a_2]\times\overline{\Omega}$ that
			\begin{eqnarray}
				&&\left[-\mathcal{A}_{\gamma, 0, \Omega}(0, \phi)+(\vartheta+\epsilon)(0, \phi)\right](a, x) \nonumber\\
				&&\quad\quad= \left(\phi(0, x)-\int_{0}^{a_2}\beta(a, x)\phi(a, x)da, \right. \nonumber\\ 
				&&\quad\quad\quad \left. \partial_a\phi(a, x)-D\left[\int_{\Omega}J_\gamma(x-y)\phi(a, y)dy-\phi(a, x)\right]+\mu(a, x)\phi+(\vartheta+\epsilon)\phi \right).\nonumber
			\end{eqnarray}
			Next we have
			\begin{eqnarray}
				&&\partial_a\phi(a, x)-D\left[\int_{\Omega}J_\gamma(x-y)\phi(a, y)dy-\phi(a, x)\right]+\mu(a, x)\phi+(\vartheta+\epsilon)\phi\nonumber\\
				&&\quad =-D\left[\int_{\Omega}J_\gamma(x-y)\phi(a, y)dy-\phi(a, x)\right]+\epsilon\phi+(\vartheta-\alpha(x))\phi\nonumber\\
				&&\quad \geq-D\left[\int_{\Omega}J_\gamma(x-y)\phi(a, y)dy-\phi(a, x)\right]+\epsilon\phi. 
				\label{Dgammam2}
			\end{eqnarray} 
			Hence for $\epsilon>0$, there holds
			$$
			-\mathcal{A}_{\gamma, 0, \Omega}(0, \phi)+(\vartheta+\epsilon-D)(0, \phi)\geq \left(0, -D\int_{\Omega}J_\gamma(x-y)\phi(a, y)dy+\epsilon\phi\right),\; \text{ in }[0, a_2]\times\overline{\Omega}. 
			$$
			As $\norm{\int_{\Omega}J_\gamma(\cdot-y)\phi(a, y)dy}_{C(\overline{\Omega})}\rightarrow0$ uniformly in $[0, a_2]$ when $\gamma\rightarrow\infty$, we can follow the arguments in the case $m>0$ to conclude \eqref{s-D}.
			
			$(ii)$ {Let $\phi=\phi(a,x)>0$ be the solution of \eqref{key} with $D=0$ which is defined for $x\in \R^N$ with normalization 
				$$
				\int_0^{a_2}\beta(a,x)\phi(a,x)da=1,\;\forall x\in\R^N. 
				$$
				Next we claim that the map $x\to((0, 
				\phi(\cdot, x)), \al(x))$ is of class $C^2$ from $\R^N$ into $\{0\}\times C([0,  a_2])\times\R$. The proof is given in Appendix, see Lemma \ref{C^4}.}
			
			For any $\epsilon>0$, similar argument as in \eqref{Dgammam} leads to 
			\begin{eqnarray}
				&&\partial_a\phi(a, x)-\frac{D}{\gamma^m}\left[\int_{\Omega}J_\gamma(x-y)\phi(a, y)dy-\phi(a, x)\right]+\mu(a, x)\phi+(\vartheta+\epsilon)\phi\nonumber\\
				&&\quad \geq-\frac{D}{\gamma^m}\left[\int_{\Omega}J_\gamma(x-y)\phi(a, y)dy-\phi(a, x)\right]+\epsilon\phi\nonumber\\
				&&\quad \geq -\frac{D}{\gamma^m}\left[\int_{\mathbb{R}^N}J_\gamma(x-y)\phi(a, y)dy-\phi(a, x)\right]+\epsilon\phi\nonumber\\
				&&\quad =-\frac{D}{\gamma^m}\left[\int_{\mathbb{R}^N}J(z)\phi(a, x+\gamma z)dz-\phi(a, x)\right]+\epsilon\phi \; \text{ in }[0, a_2]\times\overline{\Omega}.\nonumber
			\end{eqnarray}
			Then by Taylor expansion we have
			\begin{eqnarray}
				\frac{D}{\gamma^m}\left[\int_{\mathbb{R}^N}J(z)\phi(a, x+\gamma z)dz-\phi(a, x)\right]=D\gamma^{2-m}\sum_{|\nu|=2}\int_{\mathbb{R}^N}R_\nu(a, x, z)J(z)z^\nu dz, \nonumber
			\end{eqnarray}
			where $\nu=(\nu_1, \cdots, \nu_N)$ is the usual multiple index, and  
			$$
			R_\nu(a, x, z)=\frac{2}{\nu!}\int_0^1(1-s)\partial^\nu\phi(a, x+s\gamma z)ds,
			$$
			and we used the symmetry of $J$ with respect to each component. 
			
			Since $\phi\in C^2(\mathbb{R}^N, C([0, a_2]))$ and $J$ is compactly supported, there holds the boundedness of the function $x\to \sum_{|\nu|=2}\int_{\mathbb{R}^N}R_\nu(a, x, z)J(z)z^\nu dz$ on $\overline\Omega$ uniformly in $0\le a\le a_2$ and $0\le\ga\le 1$. It follows from the assumption $m\in[0, 2)$ that 
			$$
			\partial_a\phi(a, x)-\frac{D}{\gamma^m}\left[\int_{\Omega}J_\gamma(x-y)\phi(a, y)dy-\phi(a, x)\right]+\mu(a, x)\phi+(\vartheta+\epsilon)\phi\ge0
			$$ 
			in $[0, a_2]\times\overline{\Omega}$ for sufficiently small $\gamma$. This implies that
			$$ 
			-\mathcal{A}_{\gamma, m, \Omega}(0, \phi)+(\vartheta+\epsilon)(0, \phi)\geq(0, 0) \;\text{  in } [0, a_2]\times\overline{\Omega},\;0<\gamma\ll1, 
			$$
			which follows that
			$$ 
			\limsup_{\gamma\to0^+}s(\mathcal{A}_{\gamma, m, \Omega})=\limsup_{\gamma\to0^+}\lambda_p'(\mathcal{A}_{\gamma, m, \Omega})\leq s(B_1+\mathcal{C}). 
			$$
			Now we show the reverse inequality, i.e.
			\begin{eqnarray}\label{limsup}
				\liminf_{\gamma\to0^+}s(\mathcal{A}_{\gamma, m, \Omega})\geq s(B_1+\mathcal{C}). 
			\end{eqnarray} 
			For any $\epsilon>0$ sufficiently small, there exists an open ball $B_\de$ of radius $\de=\de(\epsilon)$ such that $\alpha(x)+\epsilon\geq s(B_1+\mathcal{C})=:\vartheta$ in $B_\de\cap \overline{\Omega}$, where $\alpha(x)$ is from Proposition \ref{Galphax} for $D=0$. In fact, if $\de>\epsilon$, we can reduce the ball to $B_\epsilon$ such that $\alpha(x)+\epsilon\geq\vartheta$ in $B_\epsilon\cap\overline{\Omega}$. Thus without loss of generality, we assume $\de\le\epsilon$. 
			
			Next let {$\tilde{\phi}_\epsilon\in C^2(\R^N, C([0, a_2]))\cap W^{1, 1}((0, a_2), C(\R^N))$} be nonnegative and satisfy
			$$ 
			\tilde{\phi}_\epsilon=\begin{cases} \phi \text{ in }\,[0, a_2]\times(\overline{B}_\de\cap\overline{\Omega}),\\
			 \;0 \,\text{ in }\,[0, a_2]\times(\mathbb{R}^N\setminus (B_{2\de}\cap\overline{\Omega}))\end{cases}
			 \,\text{ and }\,\sup_{[0, a_2]\times\mathbb{R}^N}\tilde{\phi}_\epsilon\leq\sup_{[0, a_2]\times\mathbb{R}^N}\phi,
			$$
			where {$\phi\in C^2(\R^N, C([0, a_2]))\cap W^{1, 1}((0, a_2), C(\R^N))$} is the solution of \eqref{key} with $D=0$ provided by Lemma \ref{C^4}. Then we have for $(a, x)\in[0, a_2]\times (B_\de\cap\overline{\Omega})$ that
			$$ 
			-\mathcal{A}_{\gamma, m, B_\de\cap\overline{\Omega}}(0, \phi)+\left(\vartheta-\epsilon-\frac{1}{|\ln\epsilon|}\right)(0, \phi):=({\rm I_3, I_4}), 
			$$
			where for any $(a, x)\in[0, a_2]\times(B_\de\cap\overline{\Omega})$ one has
			$$ 
			{\rm I_3}(x)=\phi(0, x)-\int_{0}^{a_2}\beta(a, x)\phi(a, x)da=0 
			$$
			and 
			\begin{eqnarray}
				{\rm I_4}(a, x)&\!=\!&\partial_a\phi(a, x)\!-\!\frac{D}{\gamma^m}\left[\int_{B_\de\cap\overline{\Omega}}J_\gamma(x-y)\phi(a, y)dy \!-\!\phi(a, x)\right]\!+\!\left[\mu(a, x)\!+\!\vartheta\!-\!\epsilon\!-\!\frac{1}{|\ln\epsilon|}\right]\phi(a, x)\nonumber\\
				&\!=\!&\!-\!\frac{D}{\gamma^m}\left[\int_{B_\de\cap\overline{\Omega}}J_\gamma(x-y)\phi(a, y)dy \!-\!\phi(a, x)\right]\!+\!\left[-\alpha(x)\!+\!\vartheta\!-\!\epsilon\!-\!\frac{1}{|\ln\epsilon|}\right]\phi(a, x)\nonumber\\
				&\!\leq\!&\!-\!\frac{D}{\gamma^m}\left[\int_{B_\de\cap\overline{\Omega}}J_\gamma(x-y)\phi(a, y)dy \!-\!\phi(a, x)\right]\!-\!\frac{\phi(a, x)}{|\ln\epsilon|}\nonumber\\
				&\!=\!&\!-\!\frac{D}{\gamma^m}\left[\int_{\mathbb{R}^N}J_\gamma(x-y)\tilde{\phi}_\epsilon(a, y)dy \!-\!\tilde{\phi}_\epsilon(a, x)\!-\!\int_{(B_{2\de}\setminus B_\de)\cap\overline{\Omega}}J_\gamma(x-y)\tilde{\phi}_\epsilon(a, y)dy\right]\!-\!\frac{\phi(a, x)}{|\ln\epsilon|}.\nonumber
			\end{eqnarray}
We still based on Taylor expansion deal with the estimate of							
\begin{eqnarray}
				&&\frac{D}{\gamma^m}\left[\int_{\mathbb{R}^N}J_\gamma(x-y)\tilde{\phi}_\epsilon(a, y)dy-\tilde{\phi}_\epsilon(a, x)-\int_{(B_{2\de}\setminus B_\de)\cap\overline{\Omega}}J_\gamma(x-y)\tilde{\phi}_\epsilon(a, y)dy\right]+\frac{\phi(a, x)}{|\ln\epsilon|}\nonumber\\
				&& \quad = D\gamma^{2-m}\int_{\mathbb{R}^N}J(z)\sum_{|\nu|=2}\frac{2}{\nu!}\int_0^1(1-s)^1\partial^\nu\tilde{\phi}_\epsilon(a, x+s\gamma z)dsz^\nu dz\nonumber\\
				&& \qquad -\frac{D}{\ga^{m+N}}\int_{(B_{2\de}\setminus B_\de)\cap\overline{\Omega}}J\left(\frac{x-y}{\ga}\right)\tilde{\phi}_\epsilon(a, y)dy+\frac{\phi(a, x)}{|\ln\epsilon|}\nonumber\\
				&& \quad := \mathcal{I}^1_{\epsilon, \ga}(a, x)+\mathcal{I}^2_{\epsilon, \ga}(a, x)+\mathcal{I}^3_{\epsilon}(a, x).\nonumber
			\end{eqnarray}
			Now note that $\min_{[0, a_2]\times(\overline{B}_{\de}\cap\overline{\Omega})}\phi(a, x)\ge\min_{[0, a_2]\times\overline{\Omega}}\phi(a, x)>0$ for all $0<\epsilon\ll1$. Choosing $\epsilon=\gamma^k$ with $k=\frac{m+2N}{N},$ we then have for all $0<\ga\ll1$ that
			$$
			\sup_{[0, a_2]\times\R^N}|\mathcal{I}^1_{\epsilon, \ga}|\le C_1\ga^{2-m}, \; \sup_{[0, a_2]\times\R^N}|\mathcal{I}^2_{\epsilon, \ga}|\le C_2\ga^N, \; \inf_{[0, a_2]\times\R^N}|\mathcal{I}^3_{\epsilon}|\ge\frac{C_3}{|\ln(\ga^k)|},
			$$
			where $C_i>0$ are constants independent on $\ga$ for $1\le i\le3$. As $\lim\limits_{\ga\to0^+}\ga^\beta|\ln\ga|=0$ for any $\beta>0$, the term $\mathcal{I}^3_{\epsilon}$ dominates $\mathcal{I}^1_{\epsilon, \ga}$ and $\mathcal{I}^2_{\epsilon, \ga}$ for small $\ga$. Thus we have
			$$ 
			-\mathcal{A}_{\gamma, m, B_\de\cap\overline{\Omega}}(0, \phi)+\left(\vartheta-\epsilon-\frac{1}{|\ln\epsilon|}\right)(0, \phi)\leq(0, 0) \; \text{ in } [0, a_2]\times (B_{\de}\cap\overline{\Omega}),\; 0<\gamma\ll1. 
			$$
			It then follows from the generalized principal eigenvalue and Proposition \ref{principleequal} that 
			$$ 
			s(\mathcal{A}_{\gamma, m, B_{\de}\cap\overline{\Omega}})=\lambda_p(\mathcal{A}_{\gamma, m, B_{\de}\cap\overline{\Omega}})\geq s(B_1+\mathcal{C})-\gamma^k-\frac{1}{|\ln\gamma^k|},\;0<\gamma\ll1. 
			$$
			By Proposition \ref{property}-$(iv)$, we have $s(\mathcal{A}_{\gamma, m, \Omega})\geq s(\mathcal{A}_{\gamma, m, B_{\de}\cap\overline{\Omega}})$, which yields that
			$$ 
			s(\mathcal{A}_{\gamma, m, \Omega})\geq s(B_1+\mathcal{C})-\gamma^k-\frac{1}{|\ln\gamma^k|},\;0<\gamma\ll1. 
			$$
			Letting $\gamma\rightarrow0$, we have \eqref{limsup}. Thus the result is desired.
			
			$(iii)$ Recall that $\mu$ is a radially non-decreasing function of $x$. For $\ga_1\ge\ga_2$, we show $s(\mathcal{A}_{\ga_1, 0, \Omega})\le s(\mathcal{A}_{\ga_2, 0, \Omega})$. It is equivalent to show $\la_p'(\mathcal{A}_{\ga_1, 0, \Omega})\le\la_p'(\mathcal{A}_{\ga_2, 0, \Omega})$.
			
			To this aim, set $\Omega_\ga=\frac{1}{\ga}\Omega$ and $\mu_\ga(a, x)=\mu(a, \ga x)$ for $a\in[0, a_2]$ and $x\in\Omega_\ga$. Clearly, $\la_p'(\mathcal{A}_{\ga, 0, \Omega})=\la_p'(\mathcal{B}^{\mu_\ga}_{1, 0, \Omega_\ga}+\mathcal{C})$. Therefore, we need to show that
			$$
			\la_p'(\mathcal{B}^{\mu_{\ga_1}}_{1, 0, \Omega_{\ga_1}}+\mathcal{C})\le\la_p'(\mathcal{B}^{\mu_{\ga_2}}_{1, 0, \Omega_{\ga_2}}+\mathcal{C}).
			$$
			It suffices to prove the inequality $\la_p'(\mathcal{B}^{\mu_{\ga_1}}_{1, 0, \Omega_{\ga_1}}+\mathcal{C})\le\la$ for any $\la>\la_p'(\mathcal{B}^{\mu_{\ga_2}}_{1, 0, \Omega_{\ga_2}}+\mathcal{C})$.
			
			Fix such a $\la$. By Proposition \ref{property}, there exists a function $\phi\in W^{1, 1}((0, a_2), X_{\ga_2})$ with $X_{\ga_2}=C(\overline\Omega_{\ga_2})$ or $X_{\ga_2}=L^1(\Omega_{\ga_2})$ satisfying $\phi>0$ in $[0, a_2]$ such that
			$$
			\left(-\mathcal{B}^{\mu_{\ga_2}}_{1, 0, \Omega_{\ga_2}}-\mathcal{C}\right)(0, \phi)+\la(0, \phi)\ge(0, 0), \text{ in }[0, a_2].
			$$
			Since $\Omega$ contains the origin, there hold $\Omega_{\ga_1}\subset\Omega_{\ga_2}$. Moreover, $\mu_{\ga_1}(a, x)\ge \mu_{\ga_2}(a, x)$ a.e. in $(0, a_2)\times\Omega_{\ga_1}$. Direct computations yields
			$$
			\left(-\mathcal{B}^{\mu_{\ga_1}}_{1, 0, \Omega_{\ga_1}}-\mathcal{C}\right)(0, \phi)+\la(0, \phi)\ge\left(-\mathcal{B}^{\mu_{\ga_2}}_{1, 0, \Omega_{\ga_2}}-\mathcal{C}\right)(0, \phi)+\la(0, \phi)\ge(0, 0), \text{ in }[0, a_2]\times\Omega_{\ga_1}.
			$$
			This implies $\la_p'(\mathcal{B}^{\mu_{\ga_1}}_{1, 0, \Omega_{\ga_1}}+\mathcal{C})\le\la$. Thus the proof is complete.
		\end{proof}
		
\begin{remark}
{\rm 
(i) Note that when $\beta(a, x)\equiv\beta(a)$ and $\mu(a, x)\equiv\mu(a)$, the age-structure and nonlocal diffusion can be decoupled, then the spectrum of $\mathcal{A}$ is quite clear, see Kang et al. \cite{Kang2020Age}. Thus the limiting properties of the principal eigenvalue of $\mathcal{A}$ is fully clear and is only determined by the one of nonlocal diffusion. Hence we omit the case.
				
(ii) Note that we did not discuss the case when $m=2$ and $\gamma\rightarrow0$. We conjecture that the principal eigenvalue for age-structured models with nonlocal diffusion converges to the one for age-structured models with Laplace diffusion. Actually, without age-structure, the autonomous nonlocal diffusion operator has a $L^2$ variational structure which can be used to show the convergence, see Berestycki et al. \cite{berestycki2016definition} and Su et al. \cite{su2020asymptotic}. While for the time-periodic nonlocal diffusion operators, Shen and Xie \cite{shen2015approximations,Shen2015principal} used the idea of a solution mapping to show the convergence, where they employed the spectral mapping theorem which is not valid in our case either, since we have a first order differential operator $\partial_a$ that is unbounded. However, when we add a nonlocal boundary condition to the birth rate $\beta$, it can be proved that the semigroup generated by solutions is eventually compact where spectral mapping theorem holds. Thus we can use it to show the desired convergence, see Kang and Ruan \cite{kang2021Approximation}.
			}
\end{remark}
		
		\section{Strong Maximum Principle}\label{Strong Maximum Principle}
		In this section via the sign of principal eigenvalue we establish the strong maximum principle for the operator $\mathcal{A}$ defined in \eqref{A} without kernel scaling, which is of fundamental importance and independent interest. We let Assumptions \ref{irreducible} and \ref{beta} hold, which are rewritten as follows.
		
		\begin{assumption}\label{both}
			{\rm There exists $a_2\in(0, \hat{a})$ such that $\beta\equiv0$ on $[a_2, \hat{a})\times\overline{\Omega}$ and $\int_a^{a_2}\underline\beta(l)dl>0, \forall a\in[0, a_2)$.}
		\end{assumption}
		
		\begin{definition}[Strong Maximum Principle]
			{\rm We say that $\mathcal{A}$ admits the \textit{strong maximum principle} if for any function $u\in W^{1, 1}((0, a_2), C(\overline\Omega))$ satisfying
				\begin{eqnarray}\label{SMP}
						\mathcal{A}(0, u)\leq (0, 0) &\;\text{ in } [0, a_2]\times\Omega,
				\end{eqnarray}	
				there must hold $u>0$ in $[0, a_2]\times\Omega$ unless $u\equiv0$ in $[0, a_2]\times\Omega$.}
		\end{definition}
		
		\begin{theorem}\label{MP}
			Let Assumption \ref{both} hold. Assume that $\mathcal{A}$ posses a principal eigenvalue $\la_1(\mathcal{A})$, then $\mathcal{A}$ admits the strong maximum principle if and only if $\la_1(\mathcal{A})<0$.
		\end{theorem}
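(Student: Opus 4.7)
My plan is to prove both implications, with the forward direction $(\Rightarrow)$ settled by a direct contradiction with the principal eigenfunction and the reverse direction $(\Leftarrow)$ obtained by combining resolvent positivity (to control the sign of $u$) with a two-stage nonlocal propagation argument that first spreads zeros in space via the kernel $J$ and then spreads them in age via the birth mechanism governed by Assumption \ref{both}.

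For $(\Rightarrow)$ I argue contrapositively. Assume $\lambda_1(\mathcal{A})\geq 0$ and let $\phi_1\in W^{1,1}((0,a_2),C(\overline\Omega))$ denote the strictly positive principal eigenfunction. Then $u:=-\phi_1$ lies in $W^{1,1}((0,a_2),C(\overline\Omega))$ and satisfies
\begin{equation*}
\mathcal{A}(0,u)=-\lambda_1(\mathcal{A})(0,\phi_1)\leq(0,0),
\end{equation*}
yet $u<0$ throughout $[0,a_2]\times\overline\Omega$, which already contradicts the SMP since $u\not>0$ and $u\not\equiv 0$.

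For $(\Leftarrow)$, fix $u\in W^{1,1}((0,a_2),C(\overline\Omega))$ with $\mathcal{A}(0,u)\leq(0,0)$ and assume $u\not\equiv 0$; I must show $u>0$ on $[0,a_2]\times\Omega$. Step one is nonnegativity: since $\lambda_1(\mathcal{A})=s(\mathcal{A})<0$, the value $0$ lies in $\rho(\mathcal{A})$ and Proposition \ref{sA} says $(0I-\mathcal{A})^{-1}$ is a positive operator, so applying it to the identity $(0,u)=(0I-\mathcal{A})^{-1}[-\mathcal{A}(0,u)]$ yields $u\geq 0$. Unfolding $\mathcal{A}(0,u)\leq(0,0)$ componentwise produces the boundary inequality $u(0,x)\geq\int_0^{a_2}\beta(a,x)u(a,x)\,da$ and, via a Duhamel formula along age-characteristics, the integral inequality
\begin{equation*}
u(a,x)\geq e^{-Da}\pi(0,a,x)u(0,x)+D\int_0^a e^{-D(a-s)}\pi(s,a,x)[Ku(s,\cdot)](x)\,ds.
\end{equation*}

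Step two propagates zeros. If $u(a_*,x_*)=0$, every term on the right above must vanish; hence $u(0,x_*)=0$ and $[Ku(s,\cdot)](x_*)=0$ for a.e.\ $s\in(0,a_*)$. Using $J(0)>0$, continuity of $J$, and the Sobolev embedding $W^{1,1}((0,a_2),C(\overline\Omega))\hookrightarrow C([0,a_2],C(\overline\Omega))$, this forces $u\equiv 0$ on $[0,a_*]\times(\overline{B(x_*,r_0)}\cap\overline\Omega)$ for some $r_0>0$. Iterating at each point of this patch and covering the bounded set $\overline\Omega$ by finitely many balls of radius $r_0$ gives $u\equiv 0$ on $[0,a_*]\times\overline\Omega$; in particular $u(0,\cdot)\equiv 0$. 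The main obstacle is the forward-in-age extension, since the spatial propagation above only runs backward in age; here Assumption \ref{both} is decisive. Feeding $u(0,\cdot)\equiv 0$ into the boundary inequality gives $\int_0^{a_2}\beta(a,x)u(a,x)\,da\equiv 0$ on $\overline\Omega$, so $\beta(a,x)u(a,x)=0$ a.e., and on the set $T:=\{a\in(0,a_2):\underline\beta(a)>0\}$ we obtain $u(a,\cdot)\equiv 0$. Assumption \ref{both} forces $|T\cap(a,a_2)|>0$ for every $a<a_2$, hence $\sup T=a_2$, so we can pick $a_n\in T$ with $a_n\to a_2^-$ and $u(a_n,\cdot)\equiv 0$; applying the backward-in-age propagation from each $(a_n,x)$ gives $u\equiv 0$ on $[0,a_n]\times\overline\Omega$, and continuity in $a$ lets us pass to the limit and conclude $u\equiv 0$ on $[0,a_2]\times\overline\Omega$, contradicting $u\not\equiv 0$. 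The degenerate case $a_*=0$ is handled by first using the boundary condition at $x_*$ to manufacture some $a\in T$ with $u(a,x_*)=0$, reducing to $a_*>0$.
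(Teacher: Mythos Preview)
Your proof is correct, and the overall architecture (nonnegativity first, then propagation of zeros via the kernel $J$ and the birth condition) matches the paper, but two of the key mechanisms differ.

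For necessity, the paper simply defers to Shen and Vo \cite{shen2019nonlocal}; your direct counterexample $u=-\phi_1$ is self-contained and works immediately, so this is a minor simplification.

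For sufficiency, the main divergence is how you obtain $u\geq 0$. The paper uses a sliding argument: it sets $\psi=u+\epsilon_0\phi$ with $\epsilon_0=\min\{\epsilon:u+\epsilon\phi\geq0\}$, and if $\epsilon_0>0$ the strict inequality $-\epsilon_0\lambda_1(\mathcal{A})\phi>0$ forces $\psi>0$ everywhere, contradicting minimality of $\epsilon_0$; this is where $\lambda_1(\mathcal{A})<0$ enters for the paper. You instead observe directly that $s(\mathcal{A})<0$ places $0\in\rho(\mathcal{A})$, so $(0,u)=(-\mathcal{A})^{-1}\bigl[-\mathcal{A}(0,u)\bigr]\geq(0,0)$ by resolvent positivity. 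Your route is shorter and uses only the abstract positivity established in Proposition~\ref{sA}, while the paper's sliding argument is more hands-on and closer to the classical PDE style.

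For the propagation step the two proofs are essentially the same in the backward-in-age direction (iterating the Duhamel inequality and the support of $J$ to spread zeros spatially over $[0,a_*]\times\overline\Omega$). Where the paper writes tersely that ``by the nonlocal equation, the solution starting at $u(a_0,\cdot)\equiv0$ will be zero'' for $l>a_0$, you give a fully explicit mechanism: from $u(0,\cdot)\equiv0$ and the boundary inequality you force $\beta u=0$ a.e., use Assumption~\ref{both} to find ages $a_n\to a_2^-$ with $u(a_n,\cdot)\equiv0$, and re-apply backward propagation from each $a_n$. This is exactly the role Assumption~\ref{both} is meant to play and your write-up makes that dependence transparent.
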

		
		\begin{proof}
			If $\la_1(\mathcal{A})$ is the principal eigenvalue of $\mathcal{A}$ associated with an eigenfunction $(0, \phi)$ with $\phi\in W^{1, 1}((0, a_2), C(\overline{\Omega}))$ satisfying $\phi>0$, then
			$$ 
			\mathcal{A}(0, \phi)-\la_1(\mathcal{A})(0, \phi)=(0, 0); 
			$$
			that is
			\begin{equation}\label{bc}
				\begin{cases}
					-\partial_a\phi+D\left[\int_{\Omega}J(x-y)\phi(a, y)dy-\phi(a, x)\right]-\mu(a, x)\phi-\la_1(\mathcal{A})\phi=0,\\
					\phi(0, x)-\int_{0}^{a_2}\beta(a, x)\phi(a, x)da=0.
				\end{cases}
			\end{equation}
			For the sufficiency, that is $\la_1(\mathcal{A})<0$ implies the strong maximum principle, let $u\in W^{1, 1}((0, a_2), C(\overline\Omega))$ be nonzero and satisfy \eqref{SMP}. Assume by contradiction that there exists $(a_0, x_0)\in[0, a_2]\times\overline{\Omega}$ such that $u(a_0, x_0)=\min_{[0, a_2]\times\overline{\Omega}}u\le0$. Then consider the set
			$$ 
			\Gamma:=\{\epsilon\in\mathbb{R}: u+\epsilon\phi\geq0\,\text{in}\,[0, a_2]\times\overline\Omega\}. 
			$$
			Denote by $\epsilon_0=\min\Gamma$ and $\psi=u+\epsilon_0\phi$. It is clear that $\epsilon_0\geq0$ by the assumption of $u(a_0, x_0)\le0$ and that $\psi\geq0$. 
			
			Now if $\epsilon_0>0$, by simple computations, we have
			\begin{eqnarray}
				\begin{cases}
					\partial_a\psi-D\left[\int_{\Omega}J(x-y)\psi(a, y)dy-\psi(a, x)\right]+\mu(a, x)\psi\geq-\epsilon_0 \la_1(\mathcal{A}) \phi>0,&(a, x)\in(0, a_2)\times\overline{\Omega},\nonumber\\
					\psi(0, x)\geq\int_{0}^{a_2}\beta(a, x)\psi(a, x)da,&x\in\overline{\Omega}.\nonumber
				\end{cases}
			\end{eqnarray}
			That is,
			\begin{eqnarray}\label{>}
				\begin{cases}
					\partial_a\psi>D\left[\int_{\Omega}J(x-y)\psi(a, y)dy-\psi(a, x)\right]-\mu(a, x)\psi,&(a, x)\in(0, a_2)\times\overline{\Omega},\\
					\psi(0, x)\geq\int_{0}^{a_2}\beta(a, x)\psi(a, x)da,&x\in\overline{\Omega}.
				\end{cases}
			\end{eqnarray}
			It follows from the first inequality in \eqref{>} that $\psi(a, \cdot)>\mathcal{U}(0, a)\psi(0, \cdot)\geq0$ for $(a, x)\in(0, a_2]\times\Omega$. Plugging it into the second inequality, we have $\psi(0, \cdot)>0,$ which by comparison principle implies that $\psi$ is strictly positive in $[0, a_2]\times\Omega$. This contradicts the fact that $\epsilon_0$ is the infimum of $\Gamma$. 
			
			If $\epsilon_0=0$, it follows that $u\ge0$ and thus $u(a_0, x_0)=0$. Then if $a_0>0$, recalling again the constant of variation formula \eqref{U}, one has
			\begin{eqnarray}\label{u}
				u(a, x)\ge e^{-Da}\pi(0, a, x)u(0, x)+D\int_0^ae^{-D(a-l)}\pi(l, a, x)[Ku](l, x)dl.
			\end{eqnarray}
			Considering the above inequality at $(a_0, x_0)$, it follows that for any $l\in[0, a_0]$, one has $[Ku](l, x_0)=0$ and thus $u(l, x_1)=0$ for all $x_1\in B(x_0, r)$. Next consider \eqref{u} at $(l, x_1)$, one has $u(l, x_2)=0$ for all $x_2\in B(x_1, r)$. Then continue this process as we did in the proof of Theorem \ref{sim}, we can get $u(l, \cdot)\equiv0$ in $\overline{\Omega}\cap B(x_0, nr)$ with some $n\in\N$ large enough for all $l\in[0, a_0]$. On the other hand, by the nonlocal equation, the solution starting at $u(a_0, \cdot)\equiv0$ will be zero; i.e., $u(l, \cdot)\equiv0$ when $l>a_0$, which implies $u\equiv0$. This contradicts the fact that $u$ is nonzero.
			
			If $a_0=0$; that is, $u(0, x_0)=0$, then the integral boundary condition implies that
			$$
			\int_{0}^{a_2}\beta(a, x_0)u(a, x_0)da\leq u(0, x_0)=0
			$$ 
which shows that $u(\cdot, x_0)=0$ somewhere in $[0, a_2]$. By Assumption \ref{both}, we can choose a point $\widetilde a\in [0, a_2]$ and $\widetilde a\neq 0$ such that $u(\widetilde a, x_0)=0$. Considering the equation \eqref{u} at $(\widetilde a, x_0)$, we have the same contradiction as above. Hence $u>0$ in $[0, a_2]\times\Omega$, which concludes the desired result.
			
			For the necessity, that is, strong maximum principle implies $\la_1(\mathcal{A})<0$, the proof is almost identical to that of Shen and Vo \cite[Theorem F]{shen2019nonlocal} once noting the boundary condition is kept invariant, i.e. $\int_0^{a_2}\beta(a, \cdot)\phi(a)da=\phi(0)$, thus is omitted here.
		\end{proof}
		
		\section{Discussions}\label{Discussions}
		Age-structured models with nonlocal diffusion could be used to characterize the spatio-temporal dynamics of biological species and transmission dynamics of infectious diseases in which the age structure of the population is a very important factor and the dispersal is in long distance. There are very few theoretical studies on the dynamics of such equations. In this paper, we studied the spectrum theory for age-structured models with nonlocal diffusion. First we gave sufficient conditions on the existence of principal eigenvalues and presented a counterexample in which no principal eigenvalue exists. Then we used the generalized principal eigenvalue to characterize the principal eigenvalue and applied it to discuss the effects of the diffusion rate on the principal eigenvalue. Finally we established the strong maximum principle for such age-structured models with nonlocal diffusion. In our forthcoming paper \cite{Ducrot2022Age-structuredII} we will investigate the existence, uniqueness and stability of such equations with monotone type of nonlinearity on the birth rate. 
		
		We expect that the results on the principal eigenvalue and the construction of sub- and super-solutions can be applied to study traveling or pulsating wave solutions and spreading speeds of age-structured models with nonlocal diffusion (see Ducrot \cite{ducrot2007travelling}, Ducrot et al. \cite{ducrot2009travelling,ducrot2010travelling,ducrot2011travelling} with random diffusion) and we leave this for future consideration.

		\renewcommand{\thesection}{\Alph{section}}
		\setcounter{theorem}{0}
		\setcounter{equation}{0}
		\setcounter{section}{0}
		\section{Appendix}
		
		\subsection{Resolvent Positive Operators Theory}							
		In this Appendix we recall the theory of resolvent positive operators, the readers can refer to Thieme \cite{thieme1998remarks,thieme2009spectral} for details. A linear operator $A: Z_1\rightarrow Z$, defined on a linear subspace $Z_1$ of $Z$, is called \textit{positive} if $Ax\in Z_+$ for all $x\in Z_1\cap Z_+$ and $A$ is not the $0$ operator, where $Z_+$ is a closed convex cone that is normal and generating, i.e. $Z=Z_+-Z_+, Z^*=Z^*_+-Z^*_+$.
		\begin{definition}
			{\rm A closed operator $A$ in $Z$ is called \textit{resolvent positive} if the resolvent set of $A$, $\rho(A)$, contains a ray $(\omega, \infty)$ and $(\lambda I - A)^{-1}$ is a positive operator (i.e. maps $Z_+$ into $Z_+$) for all $\lambda>\omega$.}
		\end{definition}
		
\begin{definition}
{\rm We define the \textit{spectral bound} of a closed operator $A$ by
				$$ 
				s(A)=\sup\{{\rm Re}\lambda\in\mathbb{R}; \lambda\in\sigma(A)\}
				$$
				and the \textit{real spectral bound} of $A$ by
				$$ 
				s_{\mathbb{R}}(A)=\sup\{\lambda\in\mathbb{R}; \lambda\in\sigma(A)\}. 
				$$
				Moreover, if $A$ is a bounded linear operator, its \textit{spectral radius} $r(A)$ is given by
				$$
				r(A)=\sup\{|\la|\in\R; \la\in\sigma(A)\}.
				$$
			}
		\end{definition}
		\begin{definition}\label{growth bound}
			{\rm A semigroup $\{S(t)\}_{t\geq0}$ is said to be \textit{essentially compact} if its \textit{essential growth bound} $\omega_1(S)$ is strictly smaller than its \textit{growth bound} $\omega(S)$, where the growth bound and essential growth bound are defined respectively by
				\begin{eqnarray}\label{A1}
					&&\omega(S):=\lim_{t\to\infty}\frac{\log\norm{S(t)}}{t},\;\omega_1(S):=\lim_{t\to\infty}\frac{\log\alpha[S(t)]}{t},
				\end{eqnarray}
				and $\alpha$ denotes the \textit{measure of noncompactness}, which is defined by
				$$ 
				\alpha[L]=\inf\{\epsilon>0, L(B)\,\text{can be covered by a finite number of balls of radius}\,\leq\epsilon\}, 
				$$ 
				where $L$ is a closed and bounded linear operator in $Z$ and $B$ is the unit ball of $Z$.}
		\end{definition}
		By the formulas 
		$$ 
		r_e(S(t))=e^{\omega_1(S)t},\; r(S(t))=e^{\omega(S)t}, 
		$$
		we can see that equivalently $r_e(S(t))$ (the essential spectral radius of $S(t)$) is strictly smaller than $r(S(t))$ (the spectral radius of $S(t)$) for one (actually for all) $t>0$.
		
		Denote the {\it part} of $A$ in $\overline{{\rm dom}(A)}$ by $A_0$ and the {\it part} of $B$ in $\overline{{\rm dom}(B)}$ by $B_0$, respectively. Let $A_0$ and $B_0$ generate positive $C_0$-semigroups $\{S_{A_0}(t)\}_{t\geq0}$ and $\{T_{B_0}(t)\}_{t\geq0}$, respectively. If $Z$ is an abstract L space (that is, a Banach lattice for which the norm is additive on the positive cone $Z_+$) and $A$ and $B$ are resolvent positive, then by \cite[Proposition 2.4]{thieme1998positive} we have
		$$ 
		s(A)=s(A_0)=\omega(S), \; s(B)=s(B_0)=\omega(T). 
		$$
		If $B$ is a resolvent positive operator and $C: {\rm dom}(B)\rightarrow Z$ is a positive linear operator, then $A=B+C$ is called a \textit{positive perturbation} of $B$. If $B+C$ is a positive perturbation of $B$ and $\lambda>s(B)$, then $C(\lambda I -B)^{-1}$ is automatically bounded (without $C$ being necessarily closed). This is a consequence of $Z_+$ being normal and generating. 
		
		\begin{theorem}[Thieme {\cite[Theorem 3.5]{thieme1998remarks}}]\label{sR}
			Let the cone $Z_+$ be normal and generating and $A$ be a resolvent positive operator in $Z$. Then $s(A)=s_{\mathbb{R}}(A)<\infty$ and $s(A)\in\sigma(A)$ whenever $s(A)>-\infty$; further there is a constant $c>0$ such that
			$$ \norm{(\lambda I - A)^{-1}}\leq c\norm{({\rm Re}\lambda I - A)^{-1}}\text{ whenever } {\rm Re}\lambda>s(A). $$
		\end{theorem}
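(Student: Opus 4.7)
The plan is to prove the three assertions in reverse logical order: first establish the resolvent estimate (which is the real engine of the theorem), and then extract $s(A)=s_{\mathbb R}(A)<\infty$ and $s(A)\in\sigma(A)$ as comparatively soft consequences using general properties of $\rho(A)$ and normality of the cone.

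The heart is a pointwise positivity bound for the complex resolvent. Concretely, I would prove that for every $x\in Z_+$ and every $\lambda=\alpha+i\beta\in\C$ with $\alpha>s(A)$,
$$
\pm\,\mathrm{Re}\,(\lambda I-A)^{-1}x\;\le\;(\alpha I-A)^{-1}x,\qquad \pm\,\mathrm{Im}\,(\lambda I-A)^{-1}x\;\le\;(\alpha I-A)^{-1}x.
$$
The starting point is the Neumann expansion $(\lambda I-A)^{-1}x=\sum_{n\ge 0}(-i\beta)^n(\alpha I-A)^{-(n+1)}x$, valid when $|\beta|\,\|(\alpha I-A)^{-1}\|<1$. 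Because $(\alpha I-A)^{-n}$ is a positive operator for every $n$ (iterating the resolvent positivity on the ray $(\omega,\infty)$), splitting the sum into its real and imaginary parts produces alternating series of vectors in $Z_+$, whose partial sums are majorized in the order of $Z$ by the corresponding series $\sum |\beta|^n(\alpha I-A)^{-(n+1)}x$. A comparison with the identity $(\alpha' I-A)^{-1}x=\sum(\alpha-\alpha')^n(\alpha I-A)^{-(n+1)}x$ (real, all positive) together with a bootstrap in $\alpha$ using the resolvent identity extends the inequality to arbitrary $\beta$. Once this vector inequality is in hand, the normality of $Z_+$ yields $\|w\|\le N\|z\|$ whenever $0\le w\le z$; applying this to each sign-component of $\mathrm{Re}$ and $\mathrm{Im}$, then decomposing arbitrary $z\in Z$ as $z=z_+-z_-$ with $\|z_\pm\|\le N'\|z\|$ (possible by the generating property), one obtains $\|(\lambda I-A)^{-1}z\|\le c\|(\alpha I-A)^{-1}\|\,\|z\|$ for a universal $c$, which is the desired estimate $\|(\lambda I-A)^{-1}\|\le c\|(\mathrm{Re}\,\lambda I-A)^{-1}\|$.

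The assertion $s(A)<\infty$ is immediate from $\rho(A)\supset(\omega,\infty)$. The equality $s(A)=s_{\mathbb R}(A)$ and the membership $s(A)\in\sigma(A)$ (when finite) are then deduced by contradiction. If $s(A)>-\infty$ were in $\rho(A)$, openness of $\rho(A)$ would give an interval $[s(A),s(A)+\delta]\subset\rho(A)$ on which the real-resolvent norm $\|(\alpha I-A)^{-1}\|$ is bounded by continuity; the resolvent estimate then forces $\lambda\mapsto\|(\lambda I-A)^{-1}\|$ to be uniformly bounded on the strip $\{s(A)<\mathrm{Re}\,\lambda<s(A)+\delta\}$. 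But the definition of $s(A)$ gives a sequence $\lambda_n\in\sigma(A)$ with $\mathrm{Re}\,\lambda_n\to s(A)$, and the resolvent must blow up when $\lambda$ approaches each $\lambda_n$ from $\rho(A)$ inside that strip — a contradiction, so $s(A)\in\sigma(A)$. The same strip argument applied on $\{s(A)<\alpha<s(A)+\delta\}$ shows that a complex spectral point with real part arbitrarily close to $s(A)$ forces a real spectral point at $s(A)$ itself, whence $s_{\mathbb R}(A)=s(A)$.

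The main obstacle is the first step: propagating the pointwise order inequality from the small-$|\beta|$ regime, where the Neumann series converges in norm, to arbitrary $\beta$ with $\mathrm{Re}\,\lambda>s(A)$. Since $Z$ is only an ordered Banach space and need not be a Banach lattice, the modulus $|(\lambda I-A)^{-1}x|$ is not available, so one cannot use lattice techniques; instead one must squeeze the inequality through the positivity of every iterate $(\mu I-A)^{-n}$ for real $\mu>\omega$, combine with an analyticity argument along real shifts, and carefully control the dependence of the normality constant on the interplay between $\mu-\alpha$ and $|\beta|$. Once this pointwise bound is secured, the transfer to an operator-norm bound and the spectral consequences follow by the short normality/openness arguments sketched above.
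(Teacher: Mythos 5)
There is a genuine gap, and it sits exactly where you flagged it. (Note also that the paper does not prove this statement at all: it is imported verbatim from Thieme \cite[Theorem 3.5]{thieme1998remarks}, so your proposal has to stand on its own.) The target inequality $\pm\,\mathrm{Re}\,(\lambda I-A)^{-1}x\le(\alpha I-A)^{-1}x$ (and its analogue for the imaginary part) is indeed the right pointwise statement, but the Neumann-series majorization you propose cannot deliver it. Expanding $(\lambda I-A)^{-1}x=\sum_{n\ge0}(-i\beta)^n(\alpha I-A)^{-(n+1)}x$ and dominating termwise by the series with coefficients $|\beta|^n$ produces the majorant $\sum_{n\ge0}|\beta|^n(\alpha I-A)^{-(n+1)}x=\bigl((\alpha-|\beta|)I-A\bigr)^{-1}x$, not $(\alpha I-A)^{-1}x$. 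Since the real resolvent is order-decreasing in the real parameter, this bound is strictly weaker, it blows up as $\alpha-|\beta|\downarrow s(A)$, and it ceases to exist altogether once $|\beta|\ge\alpha-s(A)$. There is no resolvent-identity ``bootstrap in $\alpha$'' that repairs this: iterating the resolvent identity only reproduces shifted real resolvents and never improves the majorant back to $(\alpha I-A)^{-1}x$, so the estimate you need for the operator-norm bound $\norm{(\lambda I-A)^{-1}}\le c\,\norm{(\mathrm{Re}\,\lambda\,I-A)^{-1}}$ is never reached for large $|\mathrm{Im}\,\lambda|$ --- which is precisely the regime where the theorem has content.

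The missing idea is complete monotonicity plus the Bernstein--Widder representation. For $x\in Z_+$ and $x^*\in Z_+^*$ the scalar function $f(\lambda)=\langle x^*,(\lambda I-A)^{-1}x\rangle$ satisfies $(-1)^nf^{(n)}(\lambda)=n!\,\langle x^*,(\lambda I-A)^{-(n+1)}x\rangle\ge0$ on the real ray, hence $f(\lambda)=\int_0^\infty e^{-\lambda t}\,d\mu_{x,x^*}(t)$ for a positive Borel measure; this extends $f$ analytically to the half-plane and gives $|f(\lambda)|\le f(\mathrm{Re}\,\lambda)$ for \emph{all} $\lambda$ with $\mathrm{Re}\,\lambda>s_{\mathbb R}(A)$ in one stroke, with no restriction on $|\mathrm{Im}\,\lambda|$. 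Normality and the generating property of $Z_+$ and $Z_+^*$ then convert this into the operator-norm estimate, and the half-plane $\{\mathrm{Re}\,\lambda>s_{\mathbb R}(A)\}$ is shown to lie in $\rho(A)$, which is what yields $s(A)=s_{\mathbb R}(A)$ (your argument only addresses this with the estimate already known for $\mathrm{Re}\,\lambda>s(A)$, which is too late in the logical order). A further preliminary you skip is that the positivity of $(\lambda I-A)^{-1}$, assumed only on some ray $(\omega,\infty)$, must first be propagated down to all of $(s_{\mathbb R}(A),\infty)$ by a connectedness argument before complete monotonicity can even be invoked there. Your concluding strip arguments for $s(A)\in\sigma(A)$ via $\norm{(\lambda I-A)^{-1}}\ge\mathrm{dist}(\lambda,\sigma(A))^{-1}$ are fine once the domination estimate is in hand, but as written the proof of that estimate --- the engine of the theorem, as you rightly call it --- is not achievable by the route you describe.
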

		
		\begin{corollary}[Thieme {\cite[Corollary 3.6]{thieme1998remarks}}]\label{spr}
			Let the cone $Z_+$ be normal and generating and $A$ be a resolvent positive operator in $Z$ with $\la>s(A)$. Then
			$$
			r((\la I -A)^{-1})=(\la -s(A))^{-1}.
			$$
		\end{corollary}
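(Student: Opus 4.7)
The plan is to combine two ingredients: the spectral mapping identity for the resolvent, and the preceding Theorem \ref{sR} which ensures $s(A)\in\sigma(A)$ whenever $A$ is resolvent positive on a Banach space with normal and generating cone. Writing $R(\lambda):=(\lambda I-A)^{-1}$, a standard computation (multiplying $(\mu I - R(\lambda))$ on both sides by $\lambda I - A$ and using the closedness of $A$) produces the non-zero spectrum identity
\begin{equation*}
\sigma(R(\lambda))\setminus\{0\}=\bigl\{(\lambda-\mu)^{-1}:\mu\in\sigma(A)\bigr\}.
\end{equation*}
Taking spectral radii on both sides rephrases the target formula as the purely real-variable assertion $\sup_{\mu\in\sigma(A)}|\lambda-\mu|^{-1}=(\lambda-s(A))^{-1}$.

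For the upper bound I would argue that, by definition of $s(A)$, every $\mu\in\sigma(A)$ satisfies $\operatorname{Re}\mu\leq s(A)<\lambda$, so
\begin{equation*}
|\lambda-\mu|^{2}=(\lambda-\operatorname{Re}\mu)^{2}+(\operatorname{Im}\mu)^{2}\geq(\lambda-s(A))^{2},
\end{equation*}
hence $|\lambda-\mu|^{-1}\leq(\lambda-s(A))^{-1}$. Taking the supremum over $\mu\in\sigma(A)$ yields $r(R(\lambda))\leq(\lambda-s(A))^{-1}$. This half only uses that $\lambda\in\mathbb{R}$ together with the definition of the spectral bound.

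For the reverse inequality I would invoke Theorem \ref{sR}: since $A$ is resolvent positive and the cone is normal and generating, $s(A)>-\infty$ (the statement $\lambda>s(A)$ already rules out $s(A)=+\infty$) and, crucially, $s(A)\in\sigma(A)$. Plugging $\mu=s(A)$ into the spectral mapping identity above shows $(\lambda-s(A))^{-1}\in\sigma(R(\lambda))$, whence $r(R(\lambda))\geq(\lambda-s(A))^{-1}$. The two bounds combine to give the desired equality. The only genuine obstacle is the reverse inequality: the supremum of real parts $s(A)$ must actually be achieved inside $\sigma(A)$, which is a non-trivial property of closed operators. This is precisely what Theorem \ref{sR} supplies for resolvent positive operators on normal, generating cones, so the corollary is essentially a clean corollary of that preceding result together with the resolvent spectral mapping identity.
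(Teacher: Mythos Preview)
Your argument is correct. The paper does not supply its own proof of this corollary; it is simply quoted from Thieme \cite[Corollary~3.6]{thieme1998remarks} in the Appendix, so there is nothing in-paper to compare against. Your route via the spectral mapping identity for the resolvent combined with Theorem~\ref{sR} is the standard derivation.

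One minor correction: resolvent positivity together with a normal, generating cone does \emph{not} by itself force $s(A)>-\infty$ (nilpotent translation generators are counterexamples), so your assertion to that effect is unjustified. However, this does not damage the proof: if $s(A)=-\infty$ then $\sigma(A)=\emptyset$, the spectral mapping identity gives $\sigma\bigl((\lambda I-A)^{-1}\bigr)\subset\{0\}$, and the claimed formula holds with both sides equal to $0$. With that trivial case handled separately, your argument is complete.
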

		\begin{theorem}[Thieme {\cite[Theorems 3.4 and 4.9]{thieme1998positive}}]\label{EC}
			If $C$ is a compact perturbator of $B$, $S_{A_0}(t)-T_{B_0}(t)$ is a compact operator for $t\geq0$. Moreover, if $\omega(T)<\omega(S)$, then $\{S_{A_0}(t)\}_{t\geq0}$ is an essentially compact semigroup.
		\end{theorem}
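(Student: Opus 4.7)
The plan is to follow Thieme's strategy from \cite{thieme1998positive}, splitting the argument into two pieces: first establishing compactness of a resolvent difference, and then transferring that to compactness of the semigroup difference via a Duhamel-type identity; the essential-compactness statement then falls out from standard measure-of-noncompactness estimates.

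First I would exploit the resolvent identity. For $\lambda>\max\{s(A),s(B)\}$ sufficiently large the positive-perturbation structure gives
\begin{equation*}
(\lambda I-A)^{-1}=(\lambda I-B)^{-1}+(\lambda I-B)^{-1}C(\lambda I-A)^{-1},
\end{equation*}
and iterating this once yields
\begin{equation*}
(\lambda I-A)^{-1}-(\lambda I-B)^{-1}=(\lambda I-B)^{-1}C(\lambda I-B)^{-1}+\bigl[(\lambda I-B)^{-1}C(\lambda I-B)^{-1}\bigr]\,C(\lambda I-A)^{-1}.
\end{equation*}
By the definition of compact perturbator, the bracketed operator $(\lambda I-B)^{-1}C(\lambda I-B)^{-1}$ is compact on $Z$, while $C(\lambda I-A)^{-1}$ is automatically bounded because $Z_+$ is normal and generating. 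Hence $(\lambda I-A)^{-1}-(\lambda I-B)^{-1}$ is compact on $Z$.

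Next I would pass from resolvent compactness to semigroup compactness. On $\overline{{\rm dom}(A)}=\overline{{\rm dom}(B)}$ the parts $A_0$ and $B_0$ generate $C_0$-semigroups $S_{A_0}$ and $T_{B_0}$, and a Duhamel-type identity (derived rigorously via the Hille--Yosida regularizations $A_n=nA(nI-A)^{-1}$ and $B_n=nB(nI-B)^{-1}$, where the bounded generators satisfy $e^{tA_n}-e^{tB_n}=\int_0^t e^{(t-s)B_n}(A_n-B_n)e^{sA_n}ds$) provides, in the limit,
\begin{equation*}
S_{A_0}(t)x-T_{B_0}(t)x=\int_0^t T_{B_0}(t-s)\,C_0\,S_{A_0}(s)x\,ds
\end{equation*}
for $x$ in a suitable dense subspace. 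Absorbing the unbounded factor $C$ between two resolvents $(\lambda I-B)^{-1}$, and using strong continuity of $s\mapsto T_{B_0}(t-s)$ and $s\mapsto S_{A_0}(s)$ together with the compactness obtained in the first step, one sees that the integrand can be approximated in operator norm by finite-rank operators uniformly on $[0,t]$. Since the compact operators form a closed two-sided ideal and Bochner integration preserves compactness under such uniform approximation, it follows that $S_{A_0}(t)-T_{B_0}(t)$ is compact for every $t\ge 0$.

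Finally, for the essential-compactness conclusion, the compactness of $S_{A_0}(t)-T_{B_0}(t)$ combined with subadditivity of the measure of noncompactness gives
\begin{equation*}
\alpha[S_{A_0}(t)]\le\alpha[T_{B_0}(t)]+\alpha[S_{A_0}(t)-T_{B_0}(t)]=\alpha[T_{B_0}(t)],
\end{equation*}
so $\omega_1(S_{A_0})\le\omega_1(T_{B_0})\le\omega(T_{B_0})=\omega(T)<\omega(S)=\omega(S_{A_0})$, which is precisely the definition of essential compactness. The principal obstacle throughout is the rigorous justification of the Duhamel-type identity in the second step: because $A$ and $B$ need not be densely defined in $Z$ and $C$ is unbounded, one must either work inside the framework of integrated semigroups or control the Hille--Yosida approximants carefully on the common dense subspace $\overline{{\rm dom}(A)}$, ensuring that the extra resolvent factors needed to tame $C$ can be absorbed without losing the compactness just established.
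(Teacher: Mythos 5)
This statement is not proved in the paper at all: it is recalled verbatim in the appendix as a citation of Thieme's Theorems 3.4 and 4.9 in \cite{thieme1998positive}, so there is no in-paper proof to compare against. Judged on its own, your sketch identifies the right overall architecture (resolvent-level compactness, a Duhamel/variation-of-constants identity, then subadditivity of the measure of noncompactness), and your final step is correct: $\alpha[S_{A_0}(t)]\le\alpha[T_{B_0}(t)]$ plus $\omega_1(T_{B_0})\le\omega(T_{B_0})=\omega(T)<\omega(S)$ does yield essential compactness once compactness of $S_{A_0}(t)-T_{B_0}(t)$ is in hand.

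However, there is a genuine gap exactly where you place the ``principal obstacle,'' and flagging it is not the same as closing it. First, a smaller issue: the definition of compact perturbator only makes $(\lambda I-B)^{-1}F_\lambda$ compact as an operator on $\overline{{\rm dom}(B)}$, not on all of $Z$, so your conclusion that $(\lambda I-A)^{-1}-(\lambda I-B)^{-1}$ is ``compact on $Z$'' overstates the hypotheses (compactness on $Z_0=\overline{{\rm dom}(B)}$ is what you actually get, though that would suffice for the semigroup statement). Second, and decisively: the identity $S_{A_0}(t)x-T_{B_0}(t)x=\int_0^t T_{B_0}(t-s)C_0S_{A_0}(s)x\,ds$ is not even well posed in this setting, because $C$ maps ${\rm dom}(B)$ into $Z$ rather than into $Z_0$, so $T_{B_0}(t-s)$ cannot be applied to $CS_{A_0}(s)x$; moreover $S_{A_0}(s)x$ need not lie in ${\rm dom}(B)$. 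Thieme's actual argument works with the integrated semigroup generated by $B$ (equivalently, with $\lim_{\lambda\to\infty}\int_0^t T_{B_0}(t-s)\lambda(\lambda I-B)^{-1}CS_{A_0}(s)x\,ds$) and iterates the variation-of-constants formula so that the compactness of $(\lambda I-B)^{-1}F_\lambda$ on $Z_0$ and of $(\lambda I-B)^{-1}F_\lambda^2$ on $Z$ enter directly into the convolution terms; the compactness of the resolvent difference from your first step is not by itself enough to conclude that the integrand, let alone its Bochner integral, is a norm limit of finite-rank operators. As written, the passage from step one to the compactness of $S_{A_0}(t)-T_{B_0}(t)$ is asserted rather than proved, so the core of the theorem remains open in your proposal.
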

		
		Now define a positive resolvent output family for $B$ by
		\begin{equation}\label{Flambda}
			F_\lambda=C(\lambda I - B)^{-1}, \; \lambda>s(B).
		\end{equation}
		
		\begin{theorem}[Thieme {\cite[Theorem 3.6]{thieme2009spectral}}]\label{rFlambda}
			Let $Z$ be an ordered Banach space with normal and generating cone $Z_+$ and let $A=B+C$ be a positive perturbation of $B$. Then $r(F_\lambda)$ is a decreasing convex function of $\lambda>s(B)$, and exactly one of the following three cases holds:
			\begin{itemize}
				\item [(i)] if $r(F_\lambda)\geq1$ for all $\lambda>s(B)$, then $A$ is not resolvent positive;
				\item [(ii)] if $r(F_\lambda)<1$ for all $\lambda>s(B)$, then $A$ is resolvent positive and $s(A)=s(B)$;
				\item [(iii)] if there exists $\nu>\lambda>s(B)$ such that $r(F_\nu)<1\leq r(F_\lambda)$, then $A$ is resolvent-positive and $s(B)<s(A)<\infty$; further $s=s(A)$ is characterized by $r(F_s)=1$.
			\end{itemize}
		\end{theorem}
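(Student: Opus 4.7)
The proof rests on the fundamental factorization identity valid for any $\lambda\in\rho(B)$:
\begin{equation*}
\lambda I-A=(I-F_\lambda)(\lambda I-B),
\end{equation*}
from which $\lambda\in\rho(A)\iff 1\in\rho(F_\lambda)$, and in that case
\begin{equation*}
(\lambda I-A)^{-1}=(\lambda I-B)^{-1}(I-F_\lambda)^{-1}.
\end{equation*}
This reduces the entire problem of resolvent positivity of $A$ on a ray to the spectral behaviour of the single-parameter family $F_\lambda$. The plan therefore splits into (a) structural properties of $\lambda\mapsto r(F_\lambda)$, and (b) case-by-case analysis of the three scenarios.

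For (a), I would first use the resolvent identity $(\lambda I-B)^{-1}-(\mu I-B)^{-1}=(\mu-\lambda)(\lambda I-B)^{-1}(\mu I-B)^{-1}$ to conclude that $\lambda\mapsto(\lambda I-B)^{-1}$ is positive-operator decreasing on $(s(B),\infty)$; composing with the positive $C$ yields that $F_\lambda$ is itself positive-operator decreasing, and monotonicity of the spectral radius on a normal generating cone then forces $\lambda\mapsto r(F_\lambda)$ to be non-increasing. For convexity, I would exploit that $\lambda\mapsto(\lambda I-B)^{-1}$ is completely monotonic, because its $n$-th derivative equals $(-1)^n n!(\lambda I-B)^{-(n+1)}$, a positive operator up to sign. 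Applying Thieme's super-convexity theorem (the same tool used for Claim \ref{convex} in the body) gives log-convexity, and hence continuity, of $\lambda\mapsto r(F_\lambda)$. Together these render the trichotomy (i)--(iii) exhaustive and mutually exclusive.

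For case (ii) I would run the Neumann series: $r(F_\lambda)<1$ gives $(I-F_\lambda)^{-1}=\sum_{n\geq 0}F_\lambda^n$, positive, so $(\lambda I-A)^{-1}$ is positive for every $\lambda>s(B)$ and $A$ is resolvent positive with $s(A)\leq s(B)$. For the reverse inequality one has $(I-F_\lambda)^{-1}\geq I$, hence $(\lambda I-A)^{-1}\geq(\lambda I-B)^{-1}$; monotonicity of the spectral radius on a normal cone combined with Corollary \ref{spr} gives $(\lambda-s(A))^{-1}\geq(\lambda-s(B))^{-1}$ for $\lambda$ larger than both spectral bounds, forcing $s(A)\geq s(B)$. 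Case (iii) is the same idea localized: continuity and monotonicity of $r(F_\cdot)$ produce a unique $s\in(s(B),\infty)$ with $r(F_s)=1$; the Neumann series on $(s,\infty)$ gives resolvent positivity with $s(A)\leq s$, while as $\mu\downarrow s$ the operator $(I-F_\mu)^{-1}$ has unbounded spectral radius, forcing $(\mu I-A)^{-1}$ to blow up and placing $s$ in $\sigma(A)$, so $s(A)=s$.

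The main obstacle is case (i), which asserts $A$ is \emph{not} resolvent positive whenever $r(F_\lambda)\geq 1$ on all of $(s(B),\infty)$. My plan here is contradiction: suppose $A$ were resolvent positive with bound $s(A)$, and for $\mu>\max(s(A),s(B))$ iterate the dual identity $(\mu I-A)^{-1}=(\mu I-B)^{-1}+F_\mu(\mu I-A)^{-1}$ to obtain
\begin{equation*}
(\mu I-A)^{-1}=\sum_{n=0}^{N}F_\mu^{n}(\mu I-B)^{-1}+F_\mu^{N+1}(\mu I-A)^{-1}.
\end{equation*}
All terms are then positive and the partial sums $\sum_{n=0}^N F_\mu^n(\mu I-B)^{-1}$ are majorised by the fixed positive operator $(\mu I-A)^{-1}$; normality of $Z_+$ transfers this domination into norm boundedness on $Z_+$. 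A Perron-type lower bound, of the form $\|F_\mu^n y\|\gtrsim r(F_\mu)^n$ for a well-chosen $y=(\mu I-B)^{-1}x$ with $x\in Z_+$, then forces $r(F_\mu)\leq 1$ for every such $\mu$. The delicate point, and the step I expect to take most care, is producing this Perron-type bound abstractly: in general a positive operator on an ordered Banach space need not have a positive eigenvector attaining the spectral radius, so one must approximate through strictly positive functionals on the dual cone and use the normality of $Z_+$ to pass between norm behaviour and order behaviour. Once this is in hand, combining $r(F_\mu)\leq 1$ with the standing hypothesis $r(F_\mu)\geq 1$ yields $r(F_\mu)\equiv 1$ on $(s(B),\infty)$, contradicting $\|F_\mu\|\to 0$ as $\mu\to\infty$ (inherited from $B$ being Hille--Yosida) and completing the proof.
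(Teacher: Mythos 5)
First, a point of reference: the paper does not prove Theorem \ref{rFlambda} at all --- it is quoted from Thieme \cite{thieme2009spectral} in the appendix, so there is no in-paper argument to compare yours with. Your architecture (the factorization $\lambda I-A=(I-F_\lambda)(\lambda I-B)$, complete monotonicity of $\lambda\mapsto F_\lambda$ plus the Kato--Thieme superconvexity theorem for monotonicity and convexity of $r(F_\lambda)$, Neumann series for cases (ii) and (iii)) is the right one and matches Thieme's. Two small loose ends: uniqueness of the root of $r(F_s)=1$ in case (iii) requires the convexity you established, not merely monotonicity and continuity (a priori $r(F_\cdot)$ could equal $1$ on an interval); and the cleanest way to place $s$ in $\sigma(A)$ is to invoke the classical fact that $r(T)\in\sigma(T)$ for a positive bounded operator $T$ on a space with normal generating cone, so that $r(F_s)=1$ gives $1\in\sigma(F_s)$ and hence $s\in\sigma(A)$ via the equivalence $\lambda\in\rho(A)\Leftrightarrow 1\in\rho(F_\lambda)$.

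The genuine gap is the conclusion of case (i). Your iteration correctly yields $0\le\sum_{n=0}^{N}(\lambda I-B)^{-1}F_\lambda^{n}\le(\lambda I-A)^{-1}$, and applying the positive operator $C$ to both sides gives $F_\lambda^{n}\le I+C(\lambda I-A)^{-1}$ for every $n$; normality and generation of the cone then give $\sup_n\|F_\lambda^n\|<\infty$, hence $r(F_\lambda)\le1$ by Gelfand's formula. (No Perron-type eigenvector is needed here: $\|F_\lambda^n\|\ge r(F_\lambda)^n$ holds for \emph{every} bounded operator, so the ``delicate point'' you flag is not the real issue --- the only nontrivial direction is the upper bound just described.) But $r(F_\lambda)\le 1$ does not yet contradict the case-(i) hypothesis $r(F_\lambda)\ge1$, and the contradiction you propose --- $\|F_\mu\|\to0$ as $\mu\to\infty$, ``inherited from $B$ being Hille--Yosida'' --- is unavailable: $B$ is only assumed resolvent positive, $C$ is in general unbounded so $C(\mu I-B)^{-1}$ need not decay, and if $r(F_\mu)\to0$ were automatic then case (i) would be vacuous, which it is not (the whole point of the trichotomy is that a positive perturbation can destroy resolvent positivity). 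The correct finish is: for real $\lambda>\max(s(A),s(B))$ one has $\lambda\in\rho(A)$, hence $1\in\rho(F_\lambda)$ by the factorization; since $r(F_\lambda)\in\sigma(F_\lambda)$ for the positive operator $F_\lambda$, this forces $r(F_\lambda)\neq1$, which together with $r(F_\lambda)\le1$ gives $r(F_\lambda)<1$ and contradicts the hypothesis of case (i).
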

		
		\begin{definition}
			{\rm The operator $C: {\rm dom}(B)\rightarrow Z$ is called a \textit{compact perturbator} of $B$ and $A=B+C$ a \textit{compact perturbation} of $B$ if
				$$ (\lambda I - B)^{-1}F_\lambda: \overline{{\rm dom}(B)}\rightarrow\overline{{\rm dom}(B)} \text{ is compact for some } \lambda>s(B) $$
				and $$ (\lambda I - B)^{-1}(F_\lambda)^2: Z\rightarrow Z \text{ is compact for some } \lambda>s(B). $$
				$C$ is called an \textit{essentially compact perturbato}r of $B$ and $A=B+C$ an \textit{essentially compact perturbation} of $B$ if there is some $n\in\mathbb{N}$ such that $(\lambda I - B)^{-1}(F_\lambda)^n$ is compact for all $\lambda>s(B)$.}
		\end{definition}
		
		\begin{definition}\label{CSP}
			{\rm Let $F_\lambda$ be a positive resolvent output family for $B$.	
				A vector $x\in X_+$ is called \textit{conditionally strictly positive} if the following holds:
				\begin{itemize}
					\item[] If $x^*\in Z_+^*$ and $F^*_\lambda x^*\neq0$ for some (and then for all) $\lambda>s(B)$, then $\langle x, x^*\rangle>0$. 
				\end{itemize}
				Similarly a functional $x^*\in Z_+^*$ is said to be \textit{conditionally strictly positive} if the following holds:
				\begin{itemize}
					\item[] If $x\in Z_+$ and $F_\lambda x\neq0$ for some (and then for all) $\lambda>s(B)$, then $\langle x, x^*\rangle>0$.
				\end{itemize}
			}
		\end{definition}
		
		\begin{theorem}[Thieme {\cite[Theorems 4.7 and 4.9]{thieme1998remarks}}]\label{compactperturbation}
			Assume that $C$ is an essentially compact perturbator of $B$. Moreover assume that there exists $\lambda_2>\lambda_1>s(B)$ such that $r(F_{\lambda_1})\geq 1> r(F_{\lambda_2})$. Then $s(B)<s(A)<\infty$ and the following hold:
			\begin{itemize}
				\item [(i)] $s(A)$ is an eigenvalue of $A$ associated with positive eigenvectors of $A$ and $A^*$, has finite algebraic multiplicity, and is a pole of the resolvent of $A$. If $C$ is a compact perturbator of $B$, then all spectral values $\lambda$ of $A$ with ${\rm Re}\lambda\in(s(B), s(A)]$ are poles of the resolvent of $A$ and eigenvalues of $A$ with finite algebraic multiplicity;
				\item [(ii)] $1$ is an eigenvalue of $F_{s(A)}$ and is associated with an eigenvector $w\in Z$ of $F_{s(A)}$ such that $(\lambda I - B)^{-1}w\in Z_+$ and with an eigenvector $v^*\in Z_+^*$ of $F^*_{s(A)}$. Actually $s(A)$ is the largest $\lambda\in\mathbb{R}$ for which $1$ is an eigenvalue of $F_\lambda$.
			\end{itemize}
			Moreover, if $Z$ is a Banach lattice and there exists a fixed point of $F^*_s$ in $Z_+^*$ that is conditionally strictly positive, then the following hold:
			\begin{itemize}
				\item [(iii)] $s=s(A)$ is associated with a positive eigenvector $v$ of $A$ such that $w=(s(A) I -B)v$ is a positive fixed point of $F_{s(A)}$;
				\item [(iv)] $s$ is the only eigenvalue of $A$ associated with a positive eigenvector.
			\end{itemize}
			Finally, we assume in addition that all positive non-zero fixed points of $F_s$ are conditionally strictly positive. Then the following hold:
			\begin{itemize}
				\item [(v)] $s=s(A)$ is a first order pole of the resolvent of $A$;
				\item [(vi)] The eigenspace of $A$ associated with $s(A)$ is one-dimensional and spanned by a positive eigenvector $v$ of $A$. The eigenspace of $A^*$ associated with $s(A)$ is also spanned by a positive eigenvector $v^*$.
			\end{itemize}
		\end{theorem}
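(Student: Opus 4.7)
The plan is to reduce everything to the spectral analysis of the bounded operators $F_\lambda=C(\lambda I-B)^{-1}$ via the resolvent identity
\[
(\lambda I-A)^{-1}=(\lambda I-B)^{-1}\sum_{n=0}^{\infty}F_\lambda^{n},
\]
valid whenever $r(F_\lambda)<1$. Under the hypothesis $r(F_{\lambda_2})<1$ this series converges, so $\lambda_2\in\rho(A)$; combined with the monotonicity and convexity of $\lambda\mapsto r(F_\lambda)$ on $(s(B),\infty)$ from Theorem \ref{rFlambda}, only case (iii) of that theorem can occur, giving at once $s(B)<s(A)<\infty$ and the characterization $r(F_{s(A)})=1$. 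This is the backbone; every remaining claim is read off from spectral properties of $F_{s(A)}$ and lifted to $A$ through the resolvent identity.

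For (i) and (ii), I would fix $s:=s(A)$ and study $F_s$. The essentially compact perturbator hypothesis means $(\lambda I-B)^{-1}F_\lambda^{\,n}$ is compact for some $n$ and all $\lambda>s(B)$, so $F_\lambda$ itself is power compact on $\overline{{\rm dom}(B)}$ and hence has empty essential spectrum on the unit circle away from $0$; therefore $r(F_s)=1$ is an isolated eigenvalue of finite algebraic multiplicity and a pole of the resolvent of $F_s$. A Krein--Rutman type argument applied to the positive operator $F_s$ then produces a positive eigenvector $w\in Z_+$ and, by duality, a positive eigenfunctional $v^*\in Z_+^*$ with $F_s w=w$, $F_s^* v^*=v^*$. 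Setting $v:=(sI-B)^{-1}w$, one checks $Av=sv$, which yields $s\in\sigma_p(A)$ with positive eigenvector; the spectral mapping via the resolvent identity transfers the pole structure and finite multiplicity from $F_s$ back to $A$. The maximality of $s$ as a real $\lambda$ with $1\in\sigma_p(F_\lambda)$ follows from the strict decrease of $\lambda\mapsto r(F_\lambda)$ past $s$. When $C$ is the stronger \emph{compact} perturbator, the operators $F_\lambda$ are compact, so every spectral value $\lambda$ of $A$ with $\operatorname{Re}\lambda>s(B)$ satisfies $1\in\sigma(F_\lambda)\setminus\{0\}$, hence is an eigenvalue of $F_\lambda$ of finite algebraic multiplicity and a pole of its resolvent; the resolvent identity again pushes these properties to $A$.

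For (iii) and (iv), let $v^*\in Z_+^*$ be the conditionally strictly positive fixed point of $F_s^*$ assumed to exist. Apply $v^*$ to the eigenvalue equation $F_s w=w$ produced in step two: the functional pairs strictly positively with any $w\in Z_+$ for which $F_s w\ne 0$, which here forces $w>0$ in the Banach lattice sense, and consequently $v=(sI-B)^{-1}w\in Z_+$. For uniqueness, suppose $Av_1=\mu v_1$ with $v_1\in Z_+\setminus\{0\}$ and $\mu\in\mathbb R$. Then $F_\mu w_1=w_1$ for $w_1:=(\mu I-B)v_1$; testing against $v^*$ and using conditional strict positivity together with the strict monotonicity of $r(F_\lambda)$ rules out $\mu\ne s$, leaving $\mu=s$.

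For (v) and (vi) I would upgrade positivity to simplicity. Assuming additionally that every positive nonzero fixed point of $F_s$ is conditionally strictly positive, one shows that the eigenspace of $F_s$ at $1$ is one-dimensional: any two positive fixed points $w_1,w_2$ must be proportional, otherwise a suitable linear combination would be a nonzero positive fixed point vanishing against $v^*$, contradicting conditional strict positivity. Absence of nontrivial Jordan blocks, which gives the first-order pole, is proved by a standard argument: if $F_s u=u+w$ with $w$ the positive eigenvector, pairing with $v^*$ yields $\langle w,v^*\rangle=0$, again contradicting conditional strict positivity of $w$. Transferring via the resolvent identity, the one-dimensionality and first-order pole at $s=s(A)$ transfer to $A$, and the dual statements follow by the same argument applied to $A^*$.

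The main obstacle is the step where essential compactness is converted into the existence of a positive eigenvector and a pole structure for $F_s$ at the spectral radius; this is the Krein--Rutman / Nussbaum part, and care is needed because $F_s$ is not assumed to be strictly positive or irreducible in the classical sense, only essentially compact. Replacing irreducibility by the conditional strict positivity of the dual fixed point is exactly what makes the simplicity and uniqueness arguments go through, and threading this replacement consistently through the pole order, eigenspace dimension, and adjoint statements is the delicate bookkeeping that dominates the proof.
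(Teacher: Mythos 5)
First, a point of reference: the paper does not prove this statement. It is quoted, with attribution, as Theorems 4.7 and 4.9 of Thieme \cite{thieme1998remarks} and is used in the paper only as an imported tool, so there is no in-paper proof to compare against. What can be assessed is whether your reconstruction of Thieme's argument is sound.

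Your skeleton --- the Neumann series for $(\lambda I-A)^{-1}$, case (iii) of Theorem \ref{rFlambda} to obtain $s(B)<s(A)<\infty$ and $r(F_{s(A)})=1$, a Krein--Rutman argument for $F_{s(A)}$, and conditional strict positivity of the dual fixed point to force simplicity and the first-order pole --- is the right one, and your treatments of (iii)--(vi) are essentially the standard positivity arguments. But there is a genuine gap at the pivotal step: you assert that essential compactness of the perturbator makes ``$F_\lambda$ itself power compact.'' The hypothesis only gives compactness of $(\lambda I-B)^{-1}F_\lambda^{\,n}$. Writing $R=(\lambda I-B)^{-1}$, one has $F_\lambda^{\,n+1}=C\circ\bigl[R\,F_\lambda^{\,n}\bigr]$, and since $C$ is merely $B$-bounded (not bounded on $Z$), composing it with an operator that is compact for the norm of $Z$ does not produce a compact operator; every attempt to rearrange the factors runs into the same obstruction. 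Hence $r_e(F_{s})<1=r(F_s)$ does not follow from your argument, and with it the assertion that $1$ is an isolated eigenvalue of $F_s$ of finite multiplicity and a pole of its resolvent --- which is precisely what you then lift to $A$ for (i) and (ii). This is the point Thieme's machinery is built to circumvent; note that conclusion (ii) only asserts $(\lambda I-B)^{-1}w\in Z_+$ rather than $w\in Z_+$, a hint that the compactness/positivity argument is run on $(\lambda I-B)^{-1}F_\lambda$ (or $(\lambda I-B)^{-1}C$) acting on $\overline{{\rm dom}(B)}$ rather than on $F_\lambda$ itself. (In the paper's concrete application $\mathcal F_\lambda$ happens to be genuinely compact by Proposition \ref{compact}, so your shortcut works there, but not at the stated level of generality.) A secondary issue: in your uniqueness argument for (iv) you form $F_\mu w_1$ with $w_1=(\mu I-B)v_1$, which presupposes $\mu>s(B)$; the possibility $\mu\le s(B)$ must be excluded separately.
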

		
		\subsection{Theorem \ref{I} when $X=L^1(\Omega)$}
		\begin{proof}
			Note that in the proof of Theorem \ref{I}, the arguments are still valid before \eqref{com}. Thus we only show the latter part after \eqref{com}. We denote the area of $\Omega$ by $|\Omega|$. 
			
			As a result, 
			\begin{eqnarray}
				\norm{(\mathcal{B}_2(\alpha I -\mathcal{B}_1-\mathcal{C})^{-1})^n}\ge  \frac{1}{|\Omega|}\int_{\Omega}\cdots\int_{\Omega}\prod_{m=1}^{n}\left[J(x_{m-1}-x_m)\frac{DM(\alpha, D, \theta)}{1-G_\alpha(x_m)}\right]dx_n\cdots dx_0,\nonumber
			\end{eqnarray}
			which implies that for any $x_0\in\overline{\Omega}$ and $\delta>0$, 
			\begin{eqnarray}
				&&\norm{(\mathcal{B}_2(\alpha I -\mathcal{B}_1-\mathcal{C})^{-1})^n}\nonumber\\
				&&\quad \geq \frac{1}{|\Omega|}\int_{\Omega} \int_{\Omega\cap B(x_0, \delta)}\cdots\int_{\Omega\cap B(x_0, \delta)}\prod_{m=1}^{n}\left[J(x_{m-1}-x_m)\frac{DM(\alpha, D, \theta)}{1-G_\alpha(x_m)}\right]dx_n\cdots dx_1 dx_0\nonumber\\
				&&\quad \geq \frac{1}{|\Omega|}\int_{\Omega}\left[\inf_{x\in\Omega\cap B(x_0, \delta)}\int_{\Omega\cap B(x_0, \delta)}J(x-y)\frac{DM(\alpha, D, \theta)}{1-G_\alpha(y)}dy\right]^n dx_0,
			\end{eqnarray}
			where $B(x_0, \delta)$ is the open ball in $\mathbb{R}^N$ centered at $x_0$ with radius $\delta$. We can use \eqref{r<1} and Gelfand's formula for the spectral radius of a bounded linear operator to find that 
			\begin{eqnarray}\label{11}
				|\Omega|\geq \int_{\Omega} \inf_{x\in\Omega\cap B(x_0, \delta)}\int_{\Omega\cap B(x_0, \delta)}J(x-y)\frac{DM(\alpha, D, \theta)}{1-G_\alpha(y)}dy dx_0:=\int_{\Omega}I(x_0, \delta, \alpha, D)dx_0
			\end{eqnarray}
			for all $\delta>0$. {Note that $x_0\to I(x_0, \delta, \alpha, D)$ is continuous since the integrand is continuous.}
			
			Since $J$ is continuous and $J(0)>0$, there exist $r>0$ and $c_0>0$ such that $J\geq c_0$ on $B(0, r)$, the open ball in $\mathbb{R}^N$ centered at $0$ with radius $r$. Hence,
			\begin{eqnarray}
				I(x_0, \delta, \alpha, D) &\geq& \inf_{x\in\Omega\cap B(x_0, \delta)}\int_{\Omega\cap B(x_0, \delta)\cap B(x, r)}J(x-y)\frac{DM(\alpha, D, \theta)}{1-G_\alpha(y)}dy\nonumber\\
				&\geq& c_0\inf_{x\in\Omega\cap B(x_0, \delta)}\int_{\Omega\cap B(x_0, \delta)\cap B(x, r)}\frac{DM(\alpha, D, \theta)}{1-G_\alpha(y)}dy\nonumber\\
				&=& c_0\int_{\Omega\cap B(x_0, \delta)}\frac{DM(\alpha, D, \theta)}{1-G_\alpha(y)}dy\label{c0}
			\end{eqnarray}
			provided $2\delta\leq r$ so that $B(x_0, \delta)\subset B(x, r)$ whenever $x\in\overline{B(x_0, \delta)}$. In particular, for any $x_0\in\overline{\Omega}$, 
			$$ 
			I(x_0, r/2, \alpha, D)\geq c_0\int_{\Omega\cap B(x_0, r/2)}\frac{DM(\alpha, D, \theta)}{1-G_\alpha(y)}dy. 
			$$
			Now we fix this $\de$. Since $\frac{1}{1-G_{\alpha^{**}}}\notin L^1_{loc}(\overline{\Omega})$, there exists $x_*\in\overline{\Omega}$ such that
			$$ 
			\frac{1}{1-G_{\alpha^{**}}}\notin L^1(\overline{\Omega}\cap B(x_*, r/2)), 
			$$
			which implies the existence of some $\epsilon>0$ small enough, such that
			\begin{eqnarray}\label{de}
			c_0\int_{\Omega\cap B(x_*, r/2)}\frac{DM(\alpha^{**}+\epsilon, D, \theta)}{1-G_{\alpha^{**}+\epsilon}(y)}dy\geq \frac{2|\Omega|}{|\Omega\cap B(0, \de)|}. 
		    \end{eqnarray}
			It follows from \eqref{c0} that
			\begin{eqnarray}
				\int_{\Omega}I(x, \de, \alpha^{**}+\epsilon, D)dx&\ge& c_0\int_{\Omega\cap B(x_*, r)}\int_{\Omega\cap B(x, \de)}\frac{DM(\alpha^{**}+\epsilon, D, \theta)}{1-G_{\alpha^{**}+\epsilon}(y)}dydx\nonumber\\
				&=&c_0\int_{\Omega\cap B(x_*, r)}\int_{\Omega\cap B(0, \de)}\frac{DM(\alpha^{**}+\epsilon, D, \theta)}{1-G_{\alpha^{**}+\epsilon}(y+x)}dydx\nonumber\\
				&=&c_0\int_{\Omega\cap B(0, \de)}\int_{\Omega\cap B(x_*, r)}\frac{DM(\alpha^{**}+\epsilon, D, \theta)}{1-G_{\alpha^{**}+\epsilon}(y+x)}dxdy.\nonumber
			\end{eqnarray}
		    Next define $\Phi: (\Omega\cap B(x_*, r))\times(\Omega\cap B(0, \de))\to \R^{2N}$ by $\Phi(x, y)=(x+y, y):=(u, v)$. It follows that the Jacobian determinant $\left|\frac{\partial(u, v)}{\partial(x, y)}\right|=1$. Since $2\de\le r$, one has 
		    $$
		    (\Omega\cap B(x_*, r/2))\times(\Omega\cap B(0, \de))\subset \Phi\left((\Omega\cap B(x_*, r))\times(\Omega\cap B(0, \de))\right).
		    $$
		    Thus by the change of variable formula for double integrals and \eqref{de} we have
		    \begin{eqnarray}
		    	&&c_0\int_{\Omega\cap B(0, \de)}\int_{\Omega\cap B(x_*, r)}\frac{DM(\alpha^{**}+\epsilon, D, \theta)}{1-G_{\alpha^{**}+\epsilon}(y+x)}dxdy\nonumber\\
		    	&\ge&c_0\int_{\Omega\cap B(0, \de)}\int_{\Omega\cap B(x_*, r/2)}\frac{DM(\alpha^{**}+\epsilon, D, \theta)}{1-G_{{\alpha^{**}+\epsilon}}(u)}dudv\nonumber\\
		    	&\ge&2|\Omega|.\nonumber
		    \end{eqnarray}
			But	this contradicts \eqref{11}. Thus our proof is complete.
		\end{proof}
		
		\subsection{$C^k$ regularity of eigenfunctions}
		For any $x\in\R^N$, we define two operators $A(x):\{0\}\times W^{1, 1}(0, a_2)\to \R\times L^1(0, a_2)$ and $F(x): \{0\}\times C([0, a_2])\to \R\times C([0, a_2])$ respectively as follows:
		$$
		A(x)(0, u):=\left(-u(0), -\partial_au-\mu(a, x)u\right),\; F(x)(0, u):=\left(\int_0^{a_2}\beta(a, x)u(a)da, 0\right).
		$$
		{Next we denote the principal eigenfunction of $A(x)+F(x)$ associated with principal eigenvalue $\al(x)$ for $x\in\R^N$ by $(0, \phi(\cdot,x))$ with normalization as follows,
		\begin{equation}\label{normalization}
		\int_0^{a_2}\beta(a, x)\phi(a,x)da=1,\;\forall x\in\R^N.
		\end{equation}
		Next for $x\in\R^N$ define a map $\mathcal{H}:\{0\}\times C([0,  a_2])\times(\al^{**}+D, \infty)\times\R^N\to \{0\}\times C([0, a_2])\times\R$ by
		\begin{equation}
			\mathcal H((0, u), \al, x)=\left((\al I -A(x))^{-1}F(x)(0, u)-(0, u), \int_0^{ a_2}\beta(a, x)u(a)da-1\right),
		\end{equation} 
		where $\al^{**}$ is from Proposition \ref{rGalpha} with $D=0$. In this subsection, we prove the following lemma.
		\begin{lemma}\label{C^4}
			Assume $\mu, \beta\in C^k(\R^N, L^\infty_+(0, a_2))$ with $k\ge0$, then the map $x\to((0, \phi(\cdot,x)), \al(x))$ is of $C^k$ from $\R^N$ to $\{0\}\times C([0,  a_2])\times\R$.
		\end{lemma}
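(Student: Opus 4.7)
The plan is to extract $C^k$ regularity of $\alpha(x)$ from the scalar characteristic equation via the classical implicit function theorem, and then deduce regularity of $\phi(\cdot,x)$ from the explicit formula for the eigenfunction.

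First, I would reduce the eigenvalue problem to an explicit ODE. Solving $[A(x)+F(x)](0,\phi)=\alpha(x)(0,\phi)$ for a normalized eigenfunction yields
$$
\phi(a,x)=\phi(0,x)\,e^{-\alpha(x)a}\pi(0,a,x),\qquad \pi(0,a,x)=e^{-\int_0^a\mu(s,x)\,ds},
$$
and the normalization \eqref{normalization} gives $\phi(0,x)\,G(\alpha(x),x)=1$, where
$$
G(\alpha,x):=\int_0^{a_2}\beta(a,x)\,e^{-\alpha a}\pi(0,a,x)\,da.
$$
Since Proposition \ref{Galphax} (applied with $D=0$) characterizes $\alpha(x)$ as the unique real root of $G(\alpha,x)=1$, we deduce $\phi(0,x)\equiv 1$ and the explicit formula $\phi(a,x)=e^{-\alpha(x)a}\pi(0,a,x)$.

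Second, I would establish $G\in C^k(\R\times\R^N,\R)$. Because integration is over the bounded interval $(0,a_2)$ and $\mu(\cdot,x),\beta(\cdot,x)\in L^\infty(0,a_2)$ depend in a $C^k$ manner on $x$, Fréchet derivatives in $x$ up to order $k$ can be taken inside the integral; the necessary uniform $L^\infty(0,a_2)$-bounds on derivatives of the integrand on compacta of $\R^N$ are automatic from $\mu,\beta\in C^k(\R^N,L^\infty(0,a_2))$. The same argument shows that $x\mapsto\pi(0,\cdot,x)$ is $C^k$ into $C([0,a_2])$, the image sitting in $C([0,a_2])$ (not merely $L^\infty$) since $a\mapsto\int_0^a\mu(s,x)\,ds$ is continuous as the antiderivative of an $L^\infty$ function. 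Dependence on $\alpha$ is real analytic.

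Third, I would verify the transversality hypothesis of the implicit function theorem. From \eqref{>0},
$$
\partial_\alpha G(\alpha(x),x)=-\int_0^{a_2}a\,\beta(a,x)\,e^{-\alpha(x)a}\pi(0,a,x)\,da,
$$
and Assumptions \ref{Ass1.1}–\ref{beta} (together with $\int_a^{a_2}\underline{\beta}(l)\,dl>0$ for $a\in[0,a_2)$) force the integrand to be nonnegative and not almost everywhere zero for each $x$, so $\partial_\alpha G(\alpha(x),x)<0$. The classical implicit function theorem then yields $\alpha\in C^k(\R^N,\R)$.

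Finally, the map $x\mapsto e^{-\alpha(x)\,\cdot}\pi(0,\cdot,x)$ is $C^k$ from $\R^N$ into $C([0,a_2])$ as a product/composition of $C^k$ maps, which completes the proof of the lemma. The main obstacle is the second step, namely upgrading the data's $L^\infty$-in-age, $C^k$-in-$x$ regularity to genuine $C([0,a_2])$-valued $C^k$ regularity of $G$ and of $\pi(0,\cdot,x)$; this requires careful bookkeeping of parameter-dependent integrals through the chain rule in the exponential, but is purely technical thanks to the boundedness of the interval $(0,a_2)$.
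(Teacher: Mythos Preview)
Your argument is correct and takes a genuinely different, more elementary route than the paper. The paper applies the implicit function theorem in the Banach space $\{0\}\times C([0,a_2])\times\R$ to the map
\[
\mathcal H\bigl((0,u),\alpha,x\bigr)=\Bigl((\alpha I-A(x))^{-1}F(x)(0,u)-(0,u),\ \int_0^{a_2}\beta(a,x)u(a)\,da-1\Bigr),
\]
and verifies invertibility of the linearization by exploiting that $F(x)$ has finite rank (hence $(\alpha I-A(x))^{-1}F(x)$ is compact), invoking the Fredholm alternative, and computing the adjoint kernel explicitly. You instead exploit the closed-form solution $\phi(a,x)=e^{-\alpha(x)a}\pi(0,a,x)$, reduce everything to the \emph{scalar} implicit function theorem applied to $G(\alpha,x)=1$, and then read off the regularity of $\phi$ directly from the formula. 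Your approach is shorter and avoids the functional-analytic machinery; the paper's argument, while heavier, has the virtue of not relying on the explicit solvability of the age-structured ODE for fixed $x$, and would therefore transfer to settings where no such formula is available. One small remark: for $\partial_\alpha G(\alpha(x),x)<0$ you do not actually need Assumption~\ref{both}; the fact that $G(\alpha(x),x)=1>0$ already forces $\beta(\cdot,x)$ to be nontrivial in $L^\infty(0,a_2)$, which is all the strict negativity requires.
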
}
		\begin{proof}
	
	To prove the above result, first note by Proposition \ref{Galphax} with $D=0$ that one has
	\begin{equation*}
	\mathcal H\bigl((0, \phi(\cdot,x)), \al(x), x\bigr)=\left((0,0),0\right),\;\forall x\in\R^N.
	\end{equation*}	
		Hence the smoothness of $\phi$ and $\alpha$ will follow from the implicit function theorem applied to the map $\mathcal H$.
		
		To that aim we fix $x_0\in\R^N$ and set 
		$$
		\al_0=\alpha(x_0)\text{ and }\phi_0(a)=\phi(a,x_0).
		$$ 
		Since $\mathcal H$ is $C^k-$smooth, to apply the implicit function theorem and to prove the smooth dependence with respect to $x$, it suffices to show that 
			\begin{eqnarray}
				&&D_{((0, u), \al)}\mathcal H((0, \phi_0), \al_0, x_0)((0, v), \eta)\nonumber\\
				&& \quad = \left((\al_0 I -A(x_0))^{-1}F(x_0)(0, v)-(0, v)-\eta(\al_0 I -A(x_0))^{-2}F(x_0)(0, \phi_0), \int_0^{a_2}\beta(a, x_0)v(a)da\right)\nonumber
			\end{eqnarray}
			is invertible as a linear mapping from	$\{0\}\times C([0, a_2])\times \R$ into $\{0\}\times C([0, a_2])\times \R$.
			
			To this end, given $((0, f), \psi)\in \{0\}\times C([0, a_2])\times \R$, we need to prove the existence and uniqueness of $((0, v), \eta)\in \{0\}\times C([0, a_2])\times \R$ such that
			\begin{equation}\label{ex}
				\begin{cases}
					(\al_0 I -A(x_0))^{-1}F(x_0)(0, v)-(0, v)-\eta(\al_0 I -A(x_0))^{-2}F(x_0)(0, \phi_0)=(0, f),\\
					\int_0^{a_2}\beta(a, x_0)v(a)da=\psi.
				\end{cases}
			\end{equation} 
			Note that $F(x_0)$ is finite rank and thus a compact operator. It follows that the operator $(\al_0 I -A(x_0))^{-1}F(x_0)$ is compact from $\{0\}\times C([0, a_2])$ to $\{0\}\times C([0, a_2])$. Hence, $(\al(x_0) I -A(x_0))^{-1}F(x_0)-I$ is a Fredholm operator with index $0$.
			
			Next we compute the adjoint operator of $T:=(\al_0 I -A(x_0))^{-1}F(x_0)$, which is denoted by $T^*$. Let us first clarify the dual space of $C([0, a_2])$, denoted by $C^*([0, a_2])$, which collects all the Radon measures in $[0, a_2]$, with the dual product given as follows,
			$$
			\langle w^*, w\rangle:=\int_0^{a_2}w(s) w^*(ds), \;\forall w^*\in C^*([0, a_2]), w\in C([0, a_2]).
			$$

			Now by definition, for any $w\in C([0, a_2])$ and $w^*\in C^*[0, a_2]$, we have $\langle (0, w^*), T(0, w)\rangle=\langle T^*(0, w^*), (0, w)\rangle$; that is,
			$$
			\int_0^{a_2}e^{-\al_0a}\pi(0, a, x_0)\int_0^{a_2}\beta(s, x_0)w(s)dsw^*(da)=\int_0^{a_2}w(s)\beta(s, x_0)\int_0^{a_2}e^{-\al_0a}\pi(0, a, x_0)w^*(da)ds.
			$$
			It follows that
			$$
			T^*(0, w^*)=\left(0,\; \beta(\cdot, x_0)\int_0^{a_2}e^{-\al_0a}\pi(0, a, x_0)w^*(da)\right),\;\forall w^*\in C^*([0, a_2]).
			$$
			Now by Fredholm Alternative, the first equation of \eqref{ex} has a unique solution $(0, v_1)\in\{0\}\times C([0, a_2])$ if and only if 
			\begin{eqnarray}
				&&(0, f)+\eta(\al_0 I -A(x_0))^{-2}F(x_0)(0, \phi_0)\in N(I-T^*)^\perp\nonumber\\
				\text{ with }&&
				N(I-T^*)^\perp:=\left\{(0, h)\in \{0\}\times C([0, a_2]): \int_0^{a_2}h(a)dw^*(a)da=0, \forall (0, w^*)\in N(I-T^*)\right\},\nonumber
			\end{eqnarray}
			where $N(T)$ denotes the kernel of $T$. Moreover, from the relation $T^*(0, w^*)=(0, w^*)$,  one obtains 
			\begin{equation}\label{perp}
				N(I-T^*)=\text{span}\{a\to\beta(a, x_0)da\}\subset C^*([0, a_2]).
			\end{equation}
			On the other hand, by some computations and \eqref{normalization}, one obtains
			\begin{eqnarray}
				(\al_0 I -A(x_0))^{-1}F(x_0)(0, v)&=&\left(0, e^{-\al_0a}\pi(0, a, x_0)\int_0^{a_2}\beta(s, x_0)v(s)ds\right),\nonumber\\
				(\al_0 I -A(x_0))^{-2}F(x_0)(0, \phi_0)&=&\left(0, ae^{-\al_0a}\pi(0, a, x_0)\right).\nonumber
			\end{eqnarray}
			It follows from \eqref{perp} that 
			\begin{equation}\label{eta}
				\int_0^{a_2}\beta(a, x_0)f(a)da+\eta\int_0^{a_2}\beta(a, x_0)ae^{-\al_0a}\pi(0, a, x_0)da=0.
			\end{equation}
			Observe that $\eta$ can be uniquely solved from the above equation as follows,
			$$
			\eta=-\frac{\int_0^{a_2}\beta(a, x_0)f(a)da}{\int_0^{a_2}\beta(a, x_0)ae^{-\al_0a}\pi(0, a, x_0)da}.
			$$
			Now observe that 
			$$
			\{0\}\times C([0, a_2])=\text{span}\{(0, \phi_0)\}\oplus \text{Range}(I-T)=\text{span}\{(0, \phi_0)\}\oplus N(I-T^*)^\perp.
			$$ 
			Then Fredholm Alternative will give us a unique solution $(0, v_1)$ of \eqref{ex}, which is in $N(I-T^*)^\perp$. Finally, let us show that the complete solution of \eqref{ex} in $\{0\}\times C([0, a_2])$. 
			If $v\in \{0\}\times C([0, a_2])$, it can be decomposed as $v=\vartheta \phi_0+v_1$, where $v_1\in N(I-T^*)^\perp$ can be solved as above. From the second equation of \eqref{ex} we can figure out $\vartheta$. Once it is done, the complete solution of \eqref{ex} exists and is unique. Indeed, one has from \eqref{ex} and \eqref{normalization} that
			\begin{eqnarray}
				\psi=\int_0^{a_2}\beta(a, x_0)v(a)da&=&\vartheta\int_0^{a_2}\beta(a, x_0)\phi_0(a)da+\int_0^{a_2}\beta(a, x_0)v_1(a)da\nonumber\\
				&=&\vartheta+\int_0^{a_2}\beta(a, x_0)v_1(a)da, \nonumber
			\end{eqnarray}
			which implies that $\vartheta=\psi-\int_0^{a_2}\beta(a, x_0)v_1(a)da$.
			
			Finally, bounded inverse theorem applies to the linear map $D_{((0, u), \al)}\mathcal H((0, \phi(\cdot,x_0)), \al(x_0), x_0)$ and concludes that its inverse is also linear and bounded, and thus we can use implicit function theorem to conclude the $C^k-$smoothness of the principal eigenpair as stated in the result.
		\end{proof}
		
		\bibliography{hulk}
		\bibliographystyle{plain}
		\addcontentsline{toc}{section}{\refname}
		
	\end{document}